\documentclass[11pt,reqno]{amsart}
\usepackage{array}
\usepackage{hyperref}
\usepackage{geometry}\geometry{margin=1in}
\usepackage{enumitem}
\usepackage{amsmath}
\usepackage{amssymb}
\usepackage{amsthm}
\usepackage{amsrefs}
\usepackage{amsfonts}
\usepackage[dvipsnames]{xcolor}
\usepackage{mathtools}
\usepackage{stackengine}
\usepackage{verbatim}
\usepackage{tikz-cd} 
\usepackage{mathrsfs}
\usepackage{float}
\makeatletter
\@namedef{subjclassname@2020}{%
  \textup{2020} MSC}
\makeatother

\hypersetup{
colorlinks   = true, %Colours links 
urlcolor     = blue, %Colour for external hyperlinks
linkcolor    = purple, %Colour of internal links
citecolor   = blue %Colour of citations
}

\def\XXint#1#2#3{{\setbox0=\hbox{$#1{#2#3}{\int}$ }
\vcenter{\hbox{$#2#3$ }}\kern-.6\wd0}}

\makeatletter
\newcommand*{\rom}[1]{\expandafter\@slowromancap\romannumeral #1@}

\newcommand{\diam}{\operatorname{diam}}

\newcommand{\SL}{\mathrm{SL}}
\newcommand{\sspan}{\operatorname{span}}

\newcommand{\M}{\mathrm{M}}

\newcommand{\X}{\mathcal{X}}

\newcommand{\R}{\mathbb{R}}

\newcommand{\e}{\varepsilon}

\newcommand{\Z}{\mathbb{Z}}

\newcommand{\SO}{{\mathrm{SO}}}

\newcommand{\te}{{\tilde{e}}}

\newcommand{\N}{\mathbb{N}}

\newcommand{\bthm}{\begin{thm}}
\newcommand{\ethm}{\end{thm}}
\newcommand{\bproof}{\begin{proof}}
\newcommand{\eproof}{\end{proof}}
\newcommand{\blem}{\begin{lem}}
\newcommand{\elem}{\end{lem}}
\newcommand{\brem}{\begin{rem}}
\newcommand{\erem}{\end{rem}}
\newcommand{\eeqn}{\end{equation}}
\newcommand{\eeqnn}{\end{equation*}}
\newcommand{\beqn}{\begin{equation}}
\newcommand{\beqnn}{\begin{equation*}}
\newcommand{\eprop}{\end{prop}}
\newcommand{\eexm}{\end{exm}}
\newcommand{\enexm}{\end{nexm}}
\newcommand{\ecor}{\end{cor}}
\newcommand{\bcor}{\begin{cor}}
\newcommand{\bexm}{\begin{exm}}
\newcommand{\bnexm}{\begin{nexm}}
\newcommand{\bprop}{\begin{prop}}
\newcommand{\bdefn}{\begin{defn}}
\newcommand{\edefn}{\end{defn}}
\newcommand{\benum}{\begin{enumerate}}
\newcommand{\eenum}{\end{enumerate}}

\newcommand{\bfe}{\mathbf{e}}

\newcommand{\Mat}{\M_{m \times n}(\R)}
\newcommand{\Prob}{\mathrm{Prob}}
\newcommand{\supp}{\operatorname{supp}}
\newcommand{\Sing}{\mathrm{Sing}}
\newcommand{\VSing}{\mathrm{Sing}}

\newcommand{\mrm}{\mathrm}
\newcommand{\mc}{\mathcal}
\newcommand{\Fcal}{\mc{F}}
\newcommand{\Kcal}{\mc{K}}
\newcommand{\Pcal}{\mc{P}}
\newcommand{\tPcal}{\Tilde{\mc{P}}}

\renewcommand{\b}{\beta}

\newcommand{\bfn}{\hat{\eta}}
\newcommand{\an}{{\alpha_{\mathbf{\eta}}} }
\newcommand{\mur}{\mu^{(r)}}
\newcommand{\bfr}{\mathbf{r}}
\newcommand{\bfs}{\mathbf{s}}
\newcommand{\bfv}{\mathbf{v}}
\newcommand{\Tick}{\checkmark}
\newcommand{\Div}{\mathrm{Div}}
\newcommand{\Divergent}{\mathrm{Divergent}_{a,b}}

\title{On the packing dimension of weighted singular matrices on fractals}

\setcounter{tocdepth}{1}
\begin{document}
\theoremstyle{plain}
\newtheorem{thm}{Theorem}[section]
\newtheorem{lem}[thm]{Lemma}
\newtheorem{prop}[thm]{Proposition}
\newtheorem{cor}[thm]{Corollary}
\newtheorem{question}{Question}
\newtheorem{con}{Conjecture}
\theoremstyle{definition}
\newtheorem{defn}[thm]{Definition}
\newtheorem{exm}[thm]{Example}
\newtheorem{nexm}[thm]{Non Example}
\newtheorem{prob}[thm]{Problem}

\theoremstyle{remark}
\newtheorem{rem}[thm]{Remark}

\author{Gaurav Aggarwal}
\address{\textbf{Gaurav Aggarwal} \\
School of Mathematics,
Tata Institute of Fundamental Research, Mumbai, India 400005}
\email{gaurav@math.tifr.res.in}

\author{Anish Ghosh}
\address{\textbf{Anish Ghosh} \\
School of Mathematics,
Tata Institute of Fundamental Research, Mumbai, India 400005}
\email{ghosh@math.tifr.res.in}

\date{}

\thanks{ A. Ghosh gratefully acknowledges support from a J. C. Bose grant and an endowment from the Infosys foundation. G. Aggarwal and A. Ghosh gratefully acknowledges a grant from the Department of Atomic Energy, Government of India, under project $12-R\&D-TFR-5.01-0500$. }

\subjclass[2020]{11J13, 11J83, 37A17}
\keywords{Diophantine approximation, ergodic theory, Hausdorff dimension, flows on homogeneous spaces}

%\date{}

\begin{abstract}  
We provide the first known upper bounds for the packing dimension of weighted singular and weighted $\omega$-singular matrices. We also prove upper bounds for these sets when intersected with fractal subsets. The latter results, even in the unweighted setting, are already new for matrices. Further, even for row vectors, our results enlarge the class of fractals for which bounds are currently known. We use methods from homogeneous dynamics, in particular we provide upper bounds for the packing dimension of points on the space of unimodular lattices, whose orbits under diagonal flows $p$-escape on average.

\end{abstract}

\maketitle

\tableofcontents

\section{Introduction}

%Singular vectors are a class of vectors for which Dirichlet's theorem in Diophantine approximation can be `infinitely improved'. More precisely, 

%I wanted to suggest the following:
%\begin{enumerate}
    %\item Section 4,5,6 there should be something in between lemmas or at start to link it or to understand what is main goal of this section.
    
%\end{enumerate}
Singular matrices are a class of vectors for which Dirichlet's theorem in Diophantine approximation can be infinitely improved. In this paper, we will be concerned with a more general setting, that of weighted singular matrices, to study which we need to introduce quasi-norms. Fix $m,n \in \N$, $a= (a_1, \ldots, a_m) \in \R^m$ and $b=(b_1, \ldots, b_n)$ such that
    \begin{align}
        a_1 \geq a_2 \geq \cdots \geq a_m>0, &\quad b_1 \geq b_2 \geq \cdots \geq b_n>0, \label{eq: a b 1}\\
        a_1 + \cdots + a_m = 1, &\quad b_1 + \cdots + b_n =1. \label{eq: a b 2}
    \end{align}
    We define a \emph{quasi-norm} $\|.\|_a$ on $\R^m$ as $\|x\|_a= \max_i |x_i|^{1/a_i}$ for all $x= (x_1, \ldots, x_m) \in \R^m$. Similarly, define a quasi-norm $\|.\|_b$ on $\R^n$ as $\|y\|_b= \max_j |y_j|^{1/b_j}$ for all $y= (y_1, \ldots, y_n) \in \R^n$.
\begin{defn}
     A matrix $\theta \in \Mat$ is called {\bf $(a,b)$-singular} if for every $\e>0$, there exists $T_{\e}>0$ such that for all $T>T_{\e}$, there exists $(p,q) \in \Z^m \times (\Z^n \setminus \{0\})$ such that the following holds
     \begin{align*}
        \|p +  \theta q\|_{a} &\leq \frac{\e}{T},  \\
         \|q\|_b &\leq T.
    \end{align*}
     We denote by $\Sing(a,b) \subset \Mat$, the set of all $(a,b)$-singular vectors in $\Mat$. 
\end{defn}

\begin{defn}
We define the $(a,b)$-uniform exponent of irrationality of a matrix $\theta \in \Mat$, denoted $\omega(\theta,a,b)$, as the supremum of $\omega>0$ such that for all large $T$, there exists $(p,q) \in \Z^m \times (\Z^n \setminus \{0\})$ such that the following holds
     \begin{align*}
        \|p +  \theta q\|_{a} &\leq \frac{1}{T^{1+ \omega}}  \\
         \|q\|_b &\leq T.
    \end{align*}
We call a matrix $\theta \in \Mat$, an {\bf $(a,b, \omega)$-singular} matrix (we say $\theta \in \VSing(a,b,\omega)$) if $\omega(\theta,a,b)\geq \omega$.

 We refer the reader to \cite{BL} for a comprehensive study of uniform Diophantine exponents. In the unweighted setting, $\omega$-singular matrices have been considered previously, cf. \cites{BugeaudCheungChevallier, DFSU, schleischitz2022}. From Minkowski's convex body Theorem, it is easy to see that $\VSing(a,b,0)= \Mat$. Moreover, if ${\omega}>0$, then $\VSing(a,b,\omega) \subset \Sing(a,b)$. 
\end{defn}

 Although the study of the classical `non-weighted' situation is more ubiquitous, Diophantine approximation with weights has been extensively studied in recent times. We refer the reader to \cite{CGGMS} for an introduction. The set of singular vectors is clearly nonempty, and due to a classical result of Khintchine, it contains uncountably many vectors when the dimension is greater than $1$. Moreover, a modification of a classical argument in \cite{Casselsbook} shows that the set of singular vectors has zero Lebesgue measure.\\

The question of estimating the Hausdorff dimension of singular vectors turns out to be a difficult problem and has received much attention in the last two decades. In a landmark work \cite{Cheung}, Y. Cheung showed that the dimension of $\Sing((1/2,1/2), 1) \subset \R^2$ is $4/3$. This was subsequently generalized to $\R^n$ in an important work of Cheung and Chevallier \cite{CheungChevallier}. Another important result was obtained by Kadyrov, Kleinbock, Lindenstrauss and Margulis in \cite{KKLM} using methods from homogeneous dynamics. Namely, a sharp upper bound on the more general set of \emph{singular on average} $m \times n$ matrices was obtained using integral inequalities as introduced in the famous work \cite{EMM98} on the Oppenheim conjecture. The complementary lower bound was obtained by Das, Fishman, Simmons and Urba\'nski in \cite{DFSU} using methods from the parametric geometry of numbers, see also the recent paper \cite{Solan} for more results in this direction. In \cite{ShahYang}, Shah and Yang obtained bounds for the dimension of certain singular vectors lying on affine subspaces.\\

Following a research direction suggested by Mahler, the topic of metric Diophantine approximation on fractals has recently witnessed a surge in popularity. In \cite{BugeaudCheungChevallier}, the authors (Problem 6) pose the following question: ``What is the Hausdorff dimension of the set of singular pairs whose entries belong to the middle third Cantor set?"
Clearly, this question can be framed in much wider generality. In \cite{Khalilsing}, Khalil upgraded the integral inequality approach of \cite{KKLM} using a beautiful argument to deal with singular vectors on fractals. That is, an upper bound on the Hausdorff dimension of singular vectors lying on self-similar fractals in $\R^m$ that satisfied the open set condition was obtained. The only other progress that we are aware of is the work of Schleischitz \cite{schleischitz2022} where \emph{lower bounds} on the packing dimension of singular vectors lying on a certain specific class of fractals were obtained.\\

As for weighted singular vectors, in \cite{LSST}, the Hausdorff dimension of weighted singular vectors in $\R^2$ was calculated. In higher dimensions, a lower bound was recently obtained by Kim and Park in \cite{KimPark2024} using an appropriate generalization of the technique of \cite{LSST} that involves the construction of a fractal set contained in $\Sing(a, 1)$. In general, this question is wide open and of considerable interest as pointed out, for instance in Section 5.7 of \cite{DFSU}.\\ 

In this paper, we provide a general method for obtaining upper bounds on the packing dimension of singular and, more generally, $(a,b,\omega)$ singular vectors in a wide variety of contexts. We consider matrices and also permit intersections with fractals. Even in the unweighted setting, we expand the current known scope of Problem 6 of Bugeaud, Cheung and Chevalier mentioned above, by including more fractals.  We can now state our main theorem.

\begin{thm}
    \label{main thm}
    Fix $m, n \in \N$, and let $a = (a_1, \ldots, a_m) \in \R^m$ and $b = (b_1, \ldots, b_n) \in \R^n$ satisfy conditions \eqref{eq: a b 1} and \eqref{eq: a b 2}. For $1 \leq l \leq d-1$, define $w_l$ as follows:
    $$
    w_l = 
    \begin{cases} 
        a_m + \cdots + a_{m+1-l}, & \text{if } l \leq m, \\
        1 - (b_1 + \cdots + b_{l-m}), & \text{if } l > m. 
    \end{cases}
    $$

    For each $1 \leq i \leq m$ and $1 \leq j \leq n$, let $\Phi_{ij}$ be an iterated function system (IFS) consisting of contracting similarities on $\R$ with equal contraction ratios, satisfying the open set condition. Let $\Kcal_{ij}$ be the limit set of $\Phi_{ij}$, and define 
    $$
    \Kcal = \{\theta \in \Mat : \theta_{ij} \in \Kcal_{ij} \text{ for all } i, j\}.
    $$
    Assume that $\dim_H(\Kcal_{ij})>0$ for all $i, j$. Then there exist constants $\eta_1, \ldots, \eta_{m+n-1} > 0$ (depending only on $\Kcal$) such that the following results hold:
    \begin{itemize}
        \item The packing dimension of $\Sing(a, b) \cap \Kcal$ satisfies:
        \begin{align}
        \label{eq: main thm 1}
            \dim_P(\Sing(a, b) \cap \Kcal) \leq \dim_P(\Kcal) - \frac{1}{a_1 + b_1} \left( \min_{1 \leq l \leq d-1} \eta_l w_l \right).
        \end{align}
        
        \item For any $\omega > 0$, the packing dimension of $\VSing(a, b, \omega) \cap \Kcal$ satisfies:
        \begin{align}
        \label{eq: main thm 2}
        \dim_P(\VSing(a, b, \omega) \cap \Kcal) \leq \dim_P(\Kcal) - \frac{1}{a_1 + b_1} \left( \min_{1 \leq l \leq d-1} \eta_l w_l + \frac{\eta_1 a_m b_n \omega}{a_m + b_n + a_m \omega} \right).
        \end{align}
    \end{itemize}

    Moreover, the constants $\eta_1, \ldots, \eta_{m+n-1}$ can be explicitly chosen in the following cases:
    \begin{enumerate}
        \item If $\Kcal = \M_{m \times n}([0,1])$, we can take:
        \begin{align}
        \eta_l = 
        \begin{cases}
            \frac{m}{l}, & \text{if } l \leq m, \\
            \frac{n}{m+n-l}, & \text{if } l > m.
        \end{cases}
        \end{align}

        \item If $n = 1$, we can take:
       \begin{align}
        \eta_l = \frac{m}{l} \min_{1 \leq i \leq m} \dim_H(\Kcal_{i1}).
       \end{align}

        \item If $m = 1$, we can take:
        \begin{align}
        \eta_l = \frac{n}{n+1-l} \min_{1 \leq j \leq n} \dim_H(\Kcal_{1j}).
       \end{align}
    \end{enumerate}
\end{thm}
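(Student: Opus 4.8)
The plan is to pass, via the Dani correspondence, to a statement about diagonal orbits on the space of unimodular lattices, to prove a covering estimate there by running a Kadyrov--Kleinbock--Lindenstrauss--Margulis type integral inequality against self-similar measures (as Khalil does for vectors on fractals), and finally to convert these covering estimates into a bound on \emph{packing} dimension through a countable decomposition along uniform escape rates.

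\emph{Step 1: dynamical reformulation.} Set $d=m+n$, $X=\SL_d(\R)/\SL_d(\Z)$, let $u(\theta)$ be the unipotent matrix with upper--right $m\times n$ block $\theta$, and let $g_t=\diag(e^{a_1 t},\dots,e^{a_m t},e^{-b_1 t},\dots,e^{-b_n t})$, which is unimodular by \eqref{eq: a b 2}. Reading the quasi-norm inequalities coordinatewise, the Dani correspondence gives: $\theta\in\Sing(a,b)$ if and only if the forward orbit $\{g_t u(\theta)\Z^d:t\ge 0\}$ is divergent in $X$ (in particular it escapes on average), while $\theta\in\VSing(a,b,\omega)$ forces the orbit not only to diverge but to make excursions into the cusp along which $-\log\|g_t u(\theta)\Z^d\|$ grows at rate at least $\kappa(\omega)=\tfrac{a_m b_n\omega}{a_m+b_n+a_m\omega}$ (a short optimization over the time at which one applies $g_t$ to the witnessing lattice vector produces exactly this $\kappa(\omega)$). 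Hence it suffices to bound $\dim_P$ of the set of $\theta\in\Kcal$ whose orbit $p$-escapes on average, together with the quantitative refinement coming from $\kappa(\omega)$.

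\emph{Step 2: the integral inequality on the fractal, and the origin of $\eta_l,w_l$.} For $1\le l\le d-1$ the action of $g_t$ on $\bigwedge^l\R^d$ contracts the minimal-covolume rank-$l$ primitive sublattice at a rate equal to a sum of $l$ of the weights, and these contraction rates are precisely the $w_l$; one assembles a KKLM-type height function out of them. Following Khalil, one averages this height function against the self-similar measure on a generation-$N$ cylinder of $\Kcal$: the equal contraction ratios make all generation-$N$ cylinders of comparable size, the open set condition together with $\dim_H(\Kcal_{ij})>0$ supplies the non-concentration of $u(\theta)\Z^d$ as $\theta$ ranges over a cylinder that the inequality requires, and the expansion rate of $u(\cdot)$ in its $(1,1)$-direction is $e^{(a_1+b_1)t}$, the fastest among all entries. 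Iterating the resulting one-step inequality bounds the proportion of generation-$N$ cylinders along which the height stays large on $[0,cN]$ by $e^{-(\min_l\eta_l w_l)cN+o(N)}$, where the $\eta_l>0$ encode the expansion/combinatorics of the self-similar structure; in the three listed cases the cylinder geometry is explicit and one reads off the stated $\eta_l$. Working at scales $\rho$ with $t=\tfrac{-\log\rho}{a_1+b_1}$ (so that the flowed $\rho$-cubes have bounded diameter) gives a covering bound $N_\rho(\text{escapers})\le N_\rho(\Kcal)\cdot\rho^{\,(\min_l\eta_l w_l)/(a_1+b_1)-o(1)}$, with an extra factor $\rho^{\,\eta_1\kappa(\omega)/(a_1+b_1)}$ in the $\VSing(a,b,\omega)$ case; here $\eta_1$ appears because the accelerated excursion forced by $\omega$-singularity is detected at rank $1$.

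\emph{Step 3: from covering counts to packing dimension, and the main obstacle.} Since $\Sing(a,b)\cap\Kcal$ is dense it has full upper box dimension, so a single covering estimate does not suffice; instead decompose $\Sing(a,b)\cap\Kcal=\bigcup_{s\in\N}E_s$, where $E_s$ collects the $\theta$ whose orbit $p$-escapes on average with a prescribed uniform rate from time $s$ onward (and similarly for $\VSing(a,b,\omega)$, carrying along the rate $\kappa(\omega)$). Each $E_s$ now genuinely obeys the covering bound of Step 2 as an upper-box-dimension estimate, giving $\overline{\dim}_B(E_s)\le\dim_P(\Kcal)-\tfrac{1}{a_1+b_1}\big(\min_l\eta_l w_l\big)$, respectively with the additional term $\tfrac{1}{a_1+b_1}\cdot\tfrac{\eta_1 a_m b_n\omega}{a_m+b_n+a_m\omega}$, uniformly in $s$; taking $\sup_s$ and then $\inf$ over the decomposition yields \eqref{eq: main thm 1} and \eqref{eq: main thm 2}. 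The crux is Step 2 — making the KKLM integral inequality run against self-similar measures rather than Lebesgue measure, i.e. establishing the needed non-concentration of $u(\theta)\Z^d$ as $\theta$ varies over a cylinder of $\Kcal$; this is where the open set condition, the equal contraction ratios and $\dim_H(\Kcal_{ij})>0$ are essential, and where the honest values of the $\eta_l$ (beyond the three explicit cases) are pinned down. A secondary difficulty is the uniformization in Step 3, which forces the decomposition to be made along quantitative escape rates rather than along the a priori non-uniform definitions of $\Sing(a,b)$ and $\VSing(a,b,\omega)$.
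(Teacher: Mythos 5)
Your outline follows the same route as the paper: Dani's correspondence to a $p$-escape-on-average statement (including the optimization that produces $\kappa(\omega)=\tfrac{a_mb_n\omega}{a_m+b_n+a_m\omega}$, cf.\ Lemma~\ref{lem: omega sing dynamical Interpretation}), a KKLM/Khalil-type contraction inequality for a height function tested against the self-similar measure, and a countable decomposition along uniform escape rates converted into a packing-dimension bound through the box-count criterion of Lemma~\ref{lem: Falconer} together with countable stability of $\dim_P$. Those three moves match Sections~\ref{sec: Height Functions}--\ref{sec: The Contraction Hypothesis} of the paper, and your attribution of the extra term to a rank-one excursion rate is right.

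The gap sits inside your Step~2, and it is not the non-concentration issue you flag (the paper resolves that by the critical exponents $\zeta_l(\mu)$ of Sections~\ref{sec: Critical Exponent}--\ref{sec: Critical Exponent: Special Cases}) but the mismatch between the anisotropic expansion of the weighted flow and the product-cylinder structure of $\Kcal$. Conjugation by $g_t$ dilates the entry $\theta_{ij}$ by $t^{a_i+b_j}$, a rate varying with $(i,j)$, while each $\Kcal_{ij}$ is self-similar only at its own fixed ratio $c_{ij}$; consequently the $g_t$-image of a cylinder $\prod_{ij}\Kcal_{ij,\te_{ij}}$ is not a cylinder at any single generation, so the rescale-and-restart step implicit in your phrase ``iterating the resulting one-step inequality'' does not close: after one step you land on a scaled copy of $\Kcal$ whose coordinate scalings $r_{ij}$ lie only in $[c_{ij},c_{ij}^{-1}]$ and depend on the step. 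The paper resolves this by proving the contraction hypothesis \emph{uniformly} over the perturbed family $\{\mu^{(r)}\}_{r\in\Xi}$ with $\Xi=\prod_{ij}[c_{ij},c_{ij}^{-1}]$ — the uniformity is already baked into the definition of $\zeta_l(\mu)$, carries through Propositions~\ref{prop: Critical Exponent representation}, \ref{prop: contraction varphi}, \ref{prop: existence of height function}, and is consumed at Observation~4 in the proof of Theorem~\ref{thm: contraction implies dimesnion bound}, where the rounding $\mathbf{r}_M\in\Xi$ between $t^{-M(a_i+b_j)}$ and $c_{ij}^{N_M(i,j)}$ is absorbed. Your plan would run as written in the unweighted or $m=1$ case (isotropic expansion), but for unequal weights it stalls at the iteration without this uniformization, which the paper's introduction explicitly identifies as its second key contribution.
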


\begin{rem}
    Note that the packing dimension of any set is larger than the Hausdorff dimension of the set. Thus \eqref{eq: main thm 1} and \eqref{eq: main thm 2} also give an upper bound for the Hausdorff dimension of the respective sets.
\end{rem}

\begin{rem}
    As will be shown in the proof, the constants $\eta_1, \ldots, \eta_{m+n-1} \in \R$ in Theorem \ref{main thm}, can be chosen as any point in the following closed region:
    \begin{align*}
        0<\eta_i &\leq \zeta_i(\mu) \quad \text{for all } 1 \leq i \leq d-1, \\
    \frac{1}{\eta_{i-j}} + \frac{1}{\eta_{i+j}} &\leq \frac{2}{\eta_i} \text{ for all } 1 \leq i \leq m+n-1, j \leq \min\{i, m+n-i\},
    \end{align*}
    where $1/\eta_0 = 1/\eta_{m+n} := 0$ and $\mu$ equals product of normalised $\dim(\Kcal_{ij})$-dimensional Hausdorff measure on $\Kcal_{ij}$. The definitions of $\zeta_1(\mu), \ldots, \zeta_{m+n-1}(\mu)$ are given in Section \ref{sec: Critical Exponent}. These constants encode information about whether the $\Kcal$ is close to certain hypersurfaces. For $m = 1$ or $n=1$, these hypersurfaces degenerate to coordinate planes, enabling non-trivial lower bounds for $\zeta_1(\mu), \ldots, \zeta_{d-1}(\mu)$, see Section \ref{sec: Critical Exponent: Special Cases}. Similarly, explicit computations are possible when $\Kcal = \M_{m \times n}([0,1])$.
\end{rem}

One of the main ingredients of the proof of Theorem \ref{main thm} is Dani’s correspondence, which gives that $(a,b)$-singular matrices corresponds to certain divergent trajectories in the space $\X_{m+n}$ of unimodular lattices in $\R^{m+n}$. More precisely, let 
\begin{align}
    g_t = \begin{pmatrix}
        t^{a_1} \\ & \ddots \\ &&t^{a_m} \\ &&& t^{-b_1} \\ &&&& \ddots \\ &&&&&t^{-b_n} \\
    \end{pmatrix}, \quad
    u(\theta)= \begin{pmatrix}
        I_m & \theta \\ & I_n
    \end{pmatrix}.
    \end{align}
for $t>0$ and $\theta \in \Mat$. Then $\theta$ is $(a,b)$-singular if and only if the diagonal orbit $(g_tu(\theta)\Z^d)_{t \geq 1}$ is divergent. A different way of quantifying the notion of singularity is the notion of singularity on average introduced in \cite{KKLM}. Given $x \in \X_{m+n}$ and $0< p \leq 1$, we define $\Divergent(x,p)$ as the set of all $\theta \in \Mat$ such that
$$
\lim_{\e \rightarrow 0} \liminf_{T \rightarrow \infty} \frac{1}{T} \int_{0}^T \delta_{g_{e^t}u(\theta) x} \{y \in \X: \lambda_1(y) \leq \e \} \, dt \geq p,
$$
where $\lambda_1(y)$ denotes the length of the shortest non-zero vector in $y$. It is then clear and will also be shown in Lemma~\ref{lem: Sing Dynamical Interpretation} that $\Sing(a,b) \subset \Divergent(\Z^{m+n}, 1)$. Similarly, as will be shown in Lemma \ref{lem: omega sing dynamical Interpretation}, if $\theta$ is $(a,b,\omega)$-singular then for all $\omega'< \omega$, there exists $T(\omega')$ such that for all $t>T(\omega')$, we have $$\lambda_1(g_tu(\theta)\Z^d) <t^{\frac{ -a_mb_n\omega'}{a_m+b_n+ a_m\omega'}}$$ for all large $t$. Thus, Theorem \ref{main thm} follows from the following more general theorem.

\begin{thm}
    \label{main thm 2}
    With notation as in Theorem \ref{main thm}, let $\Kcal_{ij}$ be the limit set of $\Phi_{ij}$, and define 
    $$
    \Kcal = \{\theta \in \Mat : \theta_{ij} \in \Kcal_{ij} \text{ for all } i, j\}.
    $$
    Assume that $\dim_H(\Kcal_{ij})>0$ for all $i, j$. Then there exist constants $\eta_1, \ldots, \eta_{m+n-1} > 0$ (depending only on $\Kcal$) such that the following results hold for any $x \in \X_{m+n}$ and $0< p \leq 1$:
    \begin{itemize}
        \item The packing dimension of $\Divergent(x,p) \cap \Kcal$ is less than or equal to
        \begin{align}
        \label{eq: main thm 2 1}
             \dim_P(\Kcal) - \frac{p}{a_1 + b_1} \left( \min_{1 \leq l \leq d-1} \eta_l w_l \right).
        \end{align}
        
        \item For any $\gamma > 0$, the packing dimension of the set $\{\theta \in \Kcal: \text{ for all large $t$, we have } \lambda_1(g_tu(\theta)x)\leq t^{-\gamma} \}$ is less than or equal to
        \begin{align}
        \label{eq: main thm 2 2}
         \dim_P(\Kcal) - \frac{1}{a_1 + b_1} \left( \min_{1 \leq l \leq d-1} \eta_l w_l + \eta_1 \gamma \right).
        \end{align}
    \end{itemize}

    Moreover, the constants $\eta_1, \ldots, \eta_{m+n-1}$ can be explicitly chosen in the following cases:
    \begin{enumerate}
        \item If $\Kcal = \M_{m \times n}([0,1])$, we can take:
        \begin{align}
        \label{eq: main thm 3}
        \eta_l = 
        \begin{cases}
            \frac{m}{l}, & \text{if } l \leq m, \\
            \frac{n}{m+n-l}, & \text{if } l > m.
        \end{cases}
        \end{align}

        \item If $n = 1$, we can take:
       \begin{align}
        \label{eq: main thm 4}
        \eta_l = \frac{m}{l} \min_{1 \leq i \leq m} \dim_H(\Kcal_{i1}).
       \end{align}

        \item If $m = 1$, we can take:
        \begin{align}
        \label{eq: main thm 5}
        \eta_l = \frac{n}{n+1-l} \min_{1 \leq j \leq n} \dim_H(\Kcal_{1j}).
       \end{align}
    \end{enumerate}
\end{thm}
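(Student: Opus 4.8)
The plan is to deduce Theorem~\ref{main thm 2} — and hence, via Lemmas~\ref{lem: Sing Dynamical Interpretation} and~\ref{lem: omega sing dynamical Interpretation}, Theorem~\ref{main thm} — from a cylinder-counting estimate on the self-similar structure of $\Kcal$, fed by a weighted, fractal version of the integral (``contraction'') inequality of \cite{KKLM} as upgraded by Khalil~\cite{Khalilsing}. After replacing each $\Phi_{ij}$ by a suitable iterate, one may assume all contraction ratios are comparable to a single $\rho\in(0,1)$, so that $\Kcal$ carries a natural family of generation-$N$ cylinders — products over $(i,j)$ of generation-$N$ cylinders of $\Kcal_{ij}$ — each of diameter $\asymp\rho^{N}$, their number being $\asymp\rho^{-Ns_0}$ where $s_0=\sum_{i,j}\dim_H(\Kcal_{ij})=\dim_P(\Kcal)$ (for self-similar sets under the open set condition, packing, Hausdorff and box dimensions agree, and the dimension is additive over such products). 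Since packing dimension is stable under countable unions and dominated by upper box dimension, for the first assertion it suffices to observe that $\Divergent(x,p)\cap\Kcal$ is contained, for every $\delta>0$ and every sufficiently small $\e>0$, in $\bigcup_{k\in\N}\mathcal B_{k,\delta,\e}$ with $\mathcal B_{k,\delta,\e}=\{\theta\in\Kcal:\ \frac1T|\{t\in[0,T]:\lambda_1(g_{e^t}u(\theta)x)\le\e\}|\ge p-\delta\ \text{for all}\ T\ge k\}$, to bound the number $N(N;k,\delta,\e)$ of generation-$N$ cylinders meeting $\mathcal B_{k,\delta,\e}$ by $C_{k,\delta,\e}\rho^{-N(s_0-\kappa_\delta)}$, and to let $\delta\to0$; likewise the second set is the countable union over $k$ of $\{\theta\in\Kcal:\ \lambda_1(g_tu(\theta)x)\le t^{-\gamma}\ \text{for all}\ t\ge k\}$, and one bounds the number of generation-$N$ cylinders meeting each.

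For the dynamical dictionary, if $\theta$ lies in a generation-$N$ cylinder with cylinder map $\phi=(\phi_{ij})$, write $\theta=\phi(\theta')$; since each $\phi_{ij}$ is a scaling by $\rho^{N}$ followed by a translation, $u(\theta)=u(c)\,D\,u(\theta')$ with $D$ diagonal, scaling the $(i,j)$-entry direction by $\rho^{N}$. Conjugating by $g_t$, using $g_tu(\psi)g_t^{-1}=u(g_t\psi)$ and the fact that $g_t$ expands the $(i,j)$-entry direction at rate $t^{a_i+b_j}$, one finds that the orbit $(g_tu(\theta)x)_t$ over a time window of length comparable to $N\log(1/\rho)$ is, up to a bounded change of base point, an orbit segment $(g_tu(\theta')x')_t$; the various entry directions reach unit scale at the different logarithmic times $\tfrac{N\log(1/\rho)}{a_i+b_j}$, which is exactly why the flag weights $w_l$ (the ordered partial sums of the $a_i$ and the $b_j$) and the critical exponents $\zeta_l(\mu)$ of Section~\ref{sec: Critical Exponent} govern the answer, and why the normalizing factor $a_1+b_1$ — the fastest such rate — converts flow time into generation number. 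Consequently ``a generation-$N$ cylinder meets $\mathcal B_{k,\delta,\e}$'' forces a bounded-distance perturbation of $x$ to spend, over the window, a $(p-\delta)$-fraction of time (resp.\ all of its time past a bounded moment, for the second set) in the shrinking cusp neighbourhoods $\{\lambda_1\le\e\}$ (resp.\ $\{\lambda_1(\cdot)\le t^{-\gamma}\}$).

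The technical heart is then a weighted fractal contraction inequality. Let $\Delta$ be the Margulis-type height function on $\X_{m+n}$ built from the covolumes of the ``pure'' sublattices along the $g_t$-flag, with exponents calibrated to $w_1,\dots,w_{d-1}$ in the manner of \cite{KKLM}. The claim — established in Section~\ref{sec: Critical Exponent} and encoded by the admissibility constraints on $\eta_1,\dots,\eta_{d-1}$ in the remark following Theorem~\ref{main thm} — is that the operator $\mathcal A$ averaging $\Delta\circ g_{\tau_0}\circ u(\cdot)$ over the generation-one cylinders against the normalized product Hausdorff measure $\mu$ satisfies $\mathcal A\Delta(\Lambda)\le\rho^{c}\Delta(\Lambda)+B$ with $c=(a_1+b_1)^{-1}\min_{1\le l\le d-1}\eta_l w_l$, the optimal choice being $\eta_l=\zeta_l(\mu)$. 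Iterating $N$ times and running the Chebyshev/large-deviation argument of \cite[\S4]{KKLM} bounds the $\mu$-measure of the union of generation-$N$ cylinders on which $\Delta$ stays above a large threshold for a $(p-\delta)$-fraction of the intermediate generations by $B'\rho^{\,(p-\delta)Nc}$; dividing by the $\mu$-mass $\asymp\rho^{Ns_0}$ of a single cylinder gives $N(N;k,\delta,\e)\le C\rho^{-N(s_0-(p-\delta)c)}$, which after $\delta\to0$ is \eqref{eq: main thm 2 1}. For \eqref{eq: main thm 2 2} one further demands that $\Delta$ grow at logarithmic rate $\gamma$ along the window; because $\lambda_1$ is controlled by the rank-one stratum of the flag, whose room inside $\Kcal$ is measured by $\eta_1$, this contributes an extra per-generation factor $\rho^{\,\eta_1\gamma/(a_1+b_1)}$, producing the additional $\eta_1\gamma$ term. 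Finally, the explicit values of $\eta_l$ amount to lower bounds for the $\zeta_l(\mu)$: for $\Kcal=M_{m\times n}([0,1])$, $\mu$ is Lebesgue and non-concentration near the relevant subvarieties depends only on their codimension, giving $\eta_l=m/l$ for $l\le m$ and $\eta_l=n/(m+n-l)$ for $l>m$; and when $m=1$ or $n=1$ those subvarieties degenerate to coordinate hyperplanes, whence a slicing argument on the product fractal shows $\mu$ does not concentrate near any of them with exponent controlled by $\min_i\dim_H(\Kcal_{i1})$, as carried out in Section~\ref{sec: Critical Exponent: Special Cases}.

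The main obstacle is the weighted fractal contraction inequality with the sharp constants $\eta_l=\zeta_l(\mu)$. It requires, at once: adapting Khalil's fractal-projection argument from Lebesgue measure to a product of self-similar measures with possibly unequal factor dimensions; carrying out the non-square, weighted flag analysis that both defines $\Delta$ and identifies the slowest stratum (the source of the $\min_l\eta_l w_l$); and keeping every estimate uniform in the generation $N$, so that the resulting cylinder count controls upper box — hence packing — dimension and not merely Hausdorff dimension, the ``$\liminf$'' and ``for all large $t$'' quantifiers being absorbed by the countable decompositions above.
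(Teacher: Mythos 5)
Your overall architecture matches the paper's: a height-function contraction hypothesis (Sections~\ref{sec: Critical Exponent}--\ref{sec: Height Functions}), an iteration scheme counting fractal cylinders that survive the contraction, a covering-number bound fed into Lemma~\ref{lem: Falconer}, and the critical exponents $\zeta_l(\mu)$ supplying the $\eta_l$. The $\eta_1\gamma$ term also enters for the same reason you describe. So the high-level plan is sound.

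There is, however, a genuine gap at the step where you write that ``after replacing each $\Phi_{ij}$ by a suitable iterate, one may assume all contraction ratios are comparable to a single $\rho$,'' so that a generation-$N$ cylinder satisfies $u(\theta)=u(c)\,D\,u(\theta')$ with $D$ diagonal, and the window is a time-$N\log(1/\rho)$ orbit segment up to a bounded base change. Equalizing the cylinder side lengths does not equalize the renormalization rates: conjugation by $g_t$ scales the $(i,j)$-th entry of $\Mat$ by $t^{a_i+b_j}$, and in the weighted case these exponents differ across $(i,j)$. Consequently the rescaling $D$ needed to bring a generation-$N$ cylinder back to unit scale is \emph{not} of the form $g_s$ for any $s$, so your ``bounded change of base point'' claim fails, and the iteration cannot be run purely on the flow $(g_t)$ against the fixed measure $\mu$. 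This is exactly the difficulty the paper singles out as its second main technical contribution. Its fix is to take, for each discrete time $k$ and each coordinate $(i,j)$, a \emph{different} generation $N_k(i,j)$ matched to $t^{-k(a_i+b_j)}$, to absorb the leftover mismatch into a factor $\bfr_k$ lying in a fixed compact set $\Xi$ of diagonal perturbations, and to prove the contraction inequality (Propositions~\ref{prop: Critical Exponent representation} and~\ref{prop: existence of height function}) and all the observations inside Theorem~\ref{thm: contraction implies dimesnion bound} uniformly over the rescaled measures $\mu^{(r)}$, $r\in\Xi$, rather than just for $\mu$. Your proposal neither introduces this uniform family of perturbed measures nor offers an alternative for the non-commensurable renormalizations, so the iterative cylinder count as written does not go through in the weighted setting; it would only be correct for equal weights $a_i+b_j\equiv 1/m+1/n$.

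A second, smaller point: because the paper keeps per-coordinate generations $N_k(i,j)$ with distinct $c_{ij}$, the covering step at the end (Observation~8 of Theorem~\ref{thm: contraction implies dimesnion bound}) has to pass from the ``flow-matched'' cylinder family $\mathcal F(M)$ to a metrically balanced one keyed to the fastest rate $a_1+b_1$, which is why $a_1+b_1$ appears in the denominator of the final bounds. Your single-$\rho$ framing would make this normalization appear automatic, but once the $N_k(i,j)$ are allowed to differ you must redo this bookkeeping explicitly, as the paper does.
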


The following table summarizes the existing literature (``Existing") and the results we prove in this paper (``New").
\begin{table}[H]
\centering
\renewcommand{\arraystretch}{1.5} % Adjust row height
\setlength{\tabcolsep}{8pt}     % Adjust column width
\begin{tabular}{|>{\centering\arraybackslash}m{4cm}|>{\centering\arraybackslash}m{4cm}|>{\centering\arraybackslash}m{6cm}|}
\hline
\textbf{Results} & \textbf{Existing} & \textbf{New} \\ 
\hline
$\mathrm{Sing}$ & \cites{Cheung, CheungChevallier, KKLM, DFSU} & Agree \\ 
\hline
$\mathrm{Divergent}(\Z^d,p)$ & \cites{KKLM, DFSU} & Agree \\ 
\hline
$\mathrm{Sing}( \omega)$ & \cites{BugeaudCheungChevallier,DFSU} & Weak \\ 
\hline
$\mathrm{Sing} \cap \mathrm{Fractal}$ &  \begin{itemize}
    \item Upper bound known for $m=1$ by \cite{Khalilsing}
    \item Allowed class of fractals = self-similar fractals in $\R^n$ satisfying the open set condition
\end{itemize}  & \begin{itemize}
    \item Upper bound for general $m,n$
    \item Allowed class of fractals= product of self-similar fractals in $\R$ with equal contraction ratio, satisfying the open set condition
    \item Upper bound agree for all common examples
    \item New example of fractal for $m=1$: Product of Middle third Cantor set with Middle fifth Cantor set
\end{itemize} \\ 
\hline
$\mathrm{Divergent}(\Z^d,p) \cap \mathrm{Fractal}$ & $\times$ & \Tick \\ 
\hline

$\VSing(\omega) \cap \mathrm{Fractal}$ & Lower bound for $n=1$ is known for certain class of fractals by \cite{schleischitz2022} & Upper bound for general $m,n$ \\
\hline
\end{tabular}
\caption{Comparison of Existing and New Results with equal weight assumption, i.e., $a= \left(\frac{1}{m}, \ldots, \frac{1}{m} \right)$ and $b= \left(\frac{1}{n}, \ldots, \frac{1}{n} \right)$}
\label{tab:comparison 1 }
\end{table}

\begin{table}[H]
\centering
\renewcommand{\arraystretch}{1.5} % Adjust row height
\setlength{\tabcolsep}{8pt}     % Adjust column width
\begin{tabular}{|>{\centering\arraybackslash}m{4cm}|>{\centering\arraybackslash}m{4cm}|>{\centering\arraybackslash}m{6cm}|}
\hline
\textbf{Results} & \textbf{Existing} & \textbf{New} \\ 
\hline
$\mathrm{Sing}(a, b)$ & \begin{itemize}
    \item Hausdorff dimension known for $m=2$, $n=1$ by \cite{LSST}
    \item Lower bound for Hausdorff dimension for $n=1$ by \cite{KimPark2024}
\end{itemize} & Upper bound on packing dimension for general $m,n$ \\ 
\hline
$\Divergent(\Z^d,p)$ & $\times$ & \Tick \\ 
\hline
$\mathrm{Sing}(a, b, \omega)$ & $\times$ & \Tick \\ 
\hline
$\mathrm{Sing}(a, b) \cap \mathrm{Fractal}$ & $\times$  & \Tick \\ 
\hline
$\Divergent(\Z^d,p) \cap \mathrm{Fractal}$ & $\times$ & \Tick \\ 
\hline
$\VSing(a,b,\omega) \cap \mathrm{Fractal}$ & $\times $ & \Tick \\
\hline
\end{tabular}
\caption{Comparison of Existing and New Results with unequal assumption, i.e., $a\neq \left(\frac{1}{m}, \ldots, \frac{1}{m} \right)$ or $b \neq \left(\frac{1}{n}, \ldots, \frac{1}{n} \right)$}
\label{tab:comparison 2}
\end{table}

\begin{rem}
The proof of Theorem \ref{main thm 2} closely follows the framework developed in \cite{KKLM}. However, the setting of fractal measures and weighted dynamics introduces several additional challenges. To explain these, we divide the proof into two main parts.

The first part involves constructing a family of height functions on the space of unimodular lattices, tailored to satisfy the contraction hypothesis for fractal measures. This construction relies on establishing an expansion property for representations and using the integral inequalities introduced in \cite{EMM98}. Proving the expansion property for fractal measures constitutes the first key contribution of the paper.

The second part applies the contraction hypothesis for the height function iteratively to obtain an upper bound on exceptional trajectories. Although this approach is inspired by \cite{KKLM}, the setting with unequal weights presents significant technical difficulties. The unequal weights result in uneven expansion on $\Mat$ under conjugation, making the iterative process particularly delicate, especially when dealing with fractal measures. To address these difficulties, we prove the contraction hypothesis not only for the original fractal measure but also for measures arising from small perturbations. Furthermore, the arguments require careful handling of heavy notation and intricate calculations to ensure the validity of the iterative process. These iterative arguments represent the second key contribution of this paper.
\end{rem}

\subsection{Acknowledgements}
The authors are extremely grateful to the anonymous referee for a very careful reading of the manuscript and for pointing out several inaccuracies. The referee’s helpful comments have significantly improved the clarity of the paper.

\section{Notation}
The following notation will be used throughout the paper.

\subsection{Packing Dimension}
The $i$-dimensional packing measure of a set $F \subset \R^l$ is defined as 
\begin{align}
    \label{eq: def P}
    \Pcal^i(F) := \inf \left\{\sum_{j= 1}^\infty \tPcal^i(F_i):  F \subset \bigcup_{j=1}^\infty  F_j \right\}, 
\end{align}
where 
\begin{align}
    \label{eq: def til P}
    \tPcal^i(F) = \inf_{\e>0} \sup \left\{ \sum_{j=1}^\infty |B_j|^i : \substack{(B_j)_1^\infty  \text{ is a countable disjoint collection of balls } \\ \text{ with centers in $F$ and with diameter $|B_j| < \e$ for all $j$.}} \right\}
\end{align}
The packing dimension of a set $F \subset \R^l$ is defined as
\begin{align}
    \label{eq: def dim P}
    \dim_P(F)= \inf\{i: \Pcal^i(F)=0\}= \sup\{i:  \Pcal^i(F)= \infty\}.
\end{align}

We will need the following important lemma.
\begin{lem}[{\cite[Lem.~3.8]{Falconer}}]
\label{lem: Falconer}
    Let $F$ be a non-empty bounded subset of $\R^l$. Then
    $$
    \dim_P(F) \leq \limsup_{\delta \rightarrow 0} \frac{\log L_{\delta}(F)}{-\log \delta},
    $$
    where $L_\delta(F)$ denotes the smallest number of sets of diameter at most $\delta$ that cover $F$.
\end{lem}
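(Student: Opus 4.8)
The plan is to derive this directly from the definitions \eqref{eq: def P}--\eqref{eq: def dim P}, recognizing that the right-hand side is nothing but the upper box-counting dimension $\overline{\dim}_B(F)$, so the assertion is the classical inequality $\dim_P(F)\le\overline{\dim}_B(F)$. I would fix an arbitrary $s>\overline{\dim}_B(F)$ and show that the premeasure vanishes, $\tPcal^s(F)=0$. Once this is done, applying \eqref{eq: def P} to the trivial cover $F\subset F$ gives $\Pcal^s(F)\le\tPcal^s(F)=0$, hence $\dim_P(F)\le s$ by \eqref{eq: def dim P}; letting $s$ decrease to $\overline{\dim}_B(F)$ finishes the proof. (Alternatively one could invoke Tricot's characterization $\dim_P(F)=\inf\{\sup_i\overline{\dim}_B(F_i):F\subset\bigcup_iF_i\}$ and specialize to $F_1=F$, but the argument below is self-contained; note that boundedness of $F$ enters only through the finiteness of $L_\delta(F)$.)

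To show $\tPcal^s(F)=0$, I would choose $t$ with $\overline{\dim}_B(F)<t<s$; by definition of the limsup there is $\delta_0\in(0,1)$ with $L_\delta(F)\le\delta^{-t}$ for all $\delta<\delta_0$. Now fix $\e\in(0,\delta_0)$ and an arbitrary countable disjoint family $(B_j)_j$ of (nondegenerate) balls with centres in $F$ and $|B_j|<\e$. Partitioning by dyadic scale, for $k\ge0$ set $\mathcal N_k=\{\,j:|B_j|\in(2^{-k-1},2^{-k}]\,\}$; then each $j$ lies in exactly one $\mathcal N_k$, and $\mathcal N_k\neq\emptyset$ forces $2^{-k-1}<\e$, i.e. $k\ge k_1(\e)$ where $k_1(\e)\to\infty$ as $\e\to0$.

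The one substantive step is a combinatorial bound on $\#\mathcal N_k$. Cover $F$ by $L_{2^{-k-1}}(F)$ sets of diameter at most $2^{-k-1}$. If two balls $B_j,B_{j'}$ with $j,j'\in\mathcal N_k$ had centres in a common covering set $U$, then the distance between those centres would be at most $\diam(U)\le2^{-k-1}$, yet disjointness of $B_j,B_{j'}$ forces it to be at least the sum of the radii, which exceeds $2\cdot2^{-k-2}=2^{-k-1}$ --- a contradiction. Hence each covering set contains at most one centre indexed by $\mathcal N_k$, so $\#\mathcal N_k\le L_{2^{-k-1}}(F)\le(2^{-k-1})^{-t}=2^{(k+1)t}$ whenever $\mathcal N_k\neq\emptyset$ (where $2^{-k-1}<\e<\delta_0$). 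Summing over scales,
\[
\sum_j|B_j|^s=\sum_{k\ge k_1(\e)}\ \sum_{j\in\mathcal N_k}|B_j|^s\ \le\ \sum_{k\ge k_1(\e)}\#\mathcal N_k\,2^{-ks}\ \le\ 2^t\sum_{k\ge k_1(\e)}2^{k(t-s)}\ =\ \frac{2^t\,2^{k_1(\e)(t-s)}}{1-2^{t-s}},
\]
a bound independent of the family $(B_j)_j$. Since $t<s$ and $k_1(\e)\to\infty$, the right-hand side tends to $0$ as $\e\to0$, so $\tPcal^s(F)=\inf_{\e>0}\sup_{(B_j)_j}\sum_j|B_j|^s=0$, as required.

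I do not anticipate a genuine obstacle: the only place where any thought is needed is the disjointness-versus-covering estimate $\#\mathcal N_k\le L_{2^{-k-1}}(F)$, and the remainder is bookkeeping over dyadic scales together with one geometric series. The single point requiring a little care is the convention for balls (open versus closed) and the relation between a ball's radius and its diameter, but in every convention two disjoint balls have centres separated by at least the sum of their radii, which is all the argument uses.
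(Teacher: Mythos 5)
The paper merely cites Falconer's book and supplies no proof of its own, so there is no in-paper argument to compare against. Your self-contained argument is correct and is essentially the standard textbook proof that $\dim_P(F)\le\overline{\dim}_B(F)$: you split a packing into dyadic scale classes $\mathcal N_k$, use the disjointness-versus-diameter estimate to bound $\#\mathcal N_k$ by the covering number $L_{2^{-k-1}}(F)\le 2^{(k+1)t}$, and sum the resulting geometric series, whose tail vanishes as $\e\to0$, giving $\tPcal^s(F)=0$ and hence $\Pcal^s(F)=0$ via the trivial cover.
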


\subsection{Homogeneous spaces}
 Fix $m,n \in \N$ and set $d=m+n$. Fix $a= (a_1, \ldots, a_m) \in \R^m$ and $b=(b_1, \ldots, b_n)$ such that
    \begin{align*}
        a_1 \geq a_2 \geq \cdots \geq a_m>0, &\quad b_1 \geq b_2 \geq \cdots \geq b_n>0, \\
        a_1 + \cdots + a_m = 1, &\quad b_1 + \cdots + b_n =1. 
    \end{align*}
    
Let $G=\SL_{d}(\R), \Gamma = \SL_{d}(\Z)$ and $\X= G/\Gamma$. The space $\X$ can be naturally identified with the space of unimodular lattices in $\R^{d}$, via the identification $A\Gamma \mapsto A\Z^d$.\\

 For $t>0$ and $\theta \in \Mat$, define
\begin{align}
    g_t = \begin{pmatrix}
        t^{a_1} \\ & \ddots \\ &&t^{a_m} \\ &&& t^{-b_1} \\ &&&& \ddots \\ &&&&&t^{-b_n} \\
    \end{pmatrix}, \quad
        \label{eq: def u theta}
    u(\theta)= \begin{pmatrix}
        I_m & \theta \\ & I_n
    \end{pmatrix}.
    \end{align}

\subsection{Iterated Function Systems}

    A contracting similarity is a map $\R \rightarrow \R$ of the form $x \mapsto cx+ y$ where $c \in (0,1)$, and $y \in \R$. A \emph{finite similarity Iterated Function System} with constant ratio ({\em IFS}) on $\R$ is a collection of contracting similarities $\Phi= (\phi_e: \R \rightarrow \R)_{e \in E}$ indexed by a finite set $E$, called the alphabet, such that there exists a constant $c \in (0,1)$ independent of $e$ so that
$$
\phi_e(x)= cx + w_e,
$$
for all $e \in E$.

Let $B = E^\N$. The coding map of an IFS $\Phi$ is the map $\sigma: B \rightarrow \R$ defined by the formula
\begin{align}
\label{eq:def sigma}
    \sigma(b)= \lim_{l \rightarrow \infty} \phi_{b_1} \circ \cdots \circ \phi_{b_l}(0).
\end{align}
It is well known that the limit in $\eqref{eq:def sigma}$ exists and that the coding map is continuous. The image of $B$ under the coding map, called the limit set of $\Phi$, is a compact subset of $\R$, which we denote by $\Kcal= \Kcal(\Phi)$. We define for $\te=(e_1, \ldots, e_l) \in E^l$, 
\begin{align}
    \label{eq: def K w, collection}
    \Kcal_\te= \phi_{e_1} \circ \cdots \circ \phi_{e_l}(\Kcal), \quad \text{ and set } \Fcal(l)= \{\Kcal_{\te} : \te \in E^l\}.
\end{align}

We will say that $\Phi$ satisfies the \emph{open set condition} (OSC for short) if there exists a non-empty open subset  $U \subset \R$ such that the following holds
    \begin{align*}
        \phi_e(U) \subset U &\text{ for every } e \in E \\
        \phi_e(U) \cap \phi_{e'}(U) = \emptyset, &\text{ for every } e \neq e' \in E.
    \end{align*}

Let $\Prob(E)$ denote the space of probability measures on $E$. For each $\nu \in \Prob(E)$ we can consider the measure $\sigma_* \nu^{\otimes \N}$ under the coding map. A measure of the form $\sigma_* \nu^{\otimes \N}$ is called a \emph{Bernoulli measure}. 

The following proposition is well known (see for e.g. \cite[Prop.~5.1(4), Thm.~5.3(1)]{Hutchinson} for a proof).

\begin{prop}
    \label{prop: important IFS}
    Suppose $\Phi= \{\phi_e: e \in E\}$ is an IFS satisfying the open set condition with the limit set $\Kcal$. Let $c$ denote the common contraction ratio of $(\phi_e)_{e \in E}$ and $p= \#E$. Then the Hausdorff dimension $s$ of $\Kcal$ is $-\log p/ \log c$. Also, the $s$-dimensional Hausdorff measure $H^s$ satisfies $0< H^s(\Kcal)< \infty$. Moreover if $\mu$ denotes the normalised restriction of $H^s$ to $\Kcal$, then $\mu$ is a Bernoulli measure and equals $\sigma_* \nu^{\otimes \N}$, where $\nu$ is the uniform measure on $E$, i.e, $\nu(F):= \#F/\#E$ for all $F \subset E$. Additionally, for every $l \in \N$ and distinct sequences $\te_1 \neq \te_2 \in E^l$, we have $\mu(\Kcal_{\te_1} \cap \Kcal_{\te_2}) = 0$. Furthermore, there exists a constant $\lambda >0$ such that for all $x \in \R$ and $y>0$, we have
    \begin{align}
        \label{eq: imp IFS}
        \mu([x-y, x+y]) \leq \lambda y^s.
    \end{align}
\end{prop}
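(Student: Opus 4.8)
The plan is to run the classical Moran--Hutchinson argument for self-similar sets, specialised to the feature that here all contraction ratios equal a single $c\in(0,1)$, so the similarity-dimension relation $\sum_{e\in E}c^{s}=1$ collapses to $pc^{s}=1$, i.e.\ $s=-\log p/\log c$. I would first dispose of the easy half: writing $\phi_{\te}=\phi_{e_{1}}\circ\cdots\circ\phi_{e_{l}}$ for $\te=(e_{1},\dots,e_{l})\in E^{l}$, the collection $\Fcal(l)$ covers $\Kcal$ by $p^{l}$ sets of diameter $c^{l}\diam(\Kcal)\to 0$, and
\[
\sum_{\te\in E^{l}}\diam(\Kcal_{\te})^{s}=p^{l}c^{ls}\diam(\Kcal)^{s}=(pc^{s})^{l}\diam(\Kcal)^{s}=\diam(\Kcal)^{s},
\]
so $H^{s}(\Kcal)\le\diam(\Kcal)^{s}<\infty$ and $\dim_{H}(\Kcal)\le s$ at once.

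The substantive input is the bounded-overlap consequence of the open set condition, which is also exactly what yields \eqref{eq: imp IFS}. I would fix a nonempty bounded open set $U$ witnessing the OSC; replacing $U$ by $U\cap N_{\varepsilon}(\Kcal)$ (the $\varepsilon$-neighbourhood of $\Kcal$) and invoking the standard invariance argument --- the inclusion $\bigcup_{e}\phi_{e}(\overline{U})\subseteq\overline{U}$ forces $\Kcal\subseteq\overline{U}$ --- one may assume $\Kcal\subseteq\overline{U}\subseteq N_{\varepsilon}(\Kcal)$. Fix an open interval $J\subseteq U$ of length $\rho>0$. For $\te$ ranging over $E^{l}$ the intervals $\phi_{\te}(J)$ are pairwise disjoint of length $c^{l}\rho$, while $\Kcal_{\te}\subseteq\phi_{\te}(\overline{U})$ has diameter $\le c^{l}(\diam(\Kcal)+2\varepsilon)$. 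Hence, for a ball $B(x,r)$ with $c^{l}\le r<c^{l-1}$, every $\phi_{\te}(\overline{U})$ meeting it lies in a ball of radius $\lesssim r$, and a one-dimensional volume count bounds the number of $\te\in E^{l}$ with $\Kcal_{\te}\cap B(x,r)\neq\emptyset$ by a constant $N=N(\Phi)$.

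Next I would put $\mu:=\sigma_{*}(\nu^{\otimes\N})$; from the concatenation identity $\sigma(eb')=\phi_{e}(\sigma(b'))$ one gets the self-similarity $\mu=\sum_{\te\in E^{l}}p^{-l}(\phi_{\te})_{*}\mu$, so evaluating at $\Kcal_{\te}$ and keeping only the diagonal term gives $\mu(\Kcal_{\te})\ge p^{-l}$, while retaining only the (at most $N$) terms $\te'$ with $\Kcal_{\te'}\cap\Kcal_{\te}\neq\emptyset$ gives $\mu(\Kcal_{\te})\le(N+1)p^{-l}$. Feeding these into the covering lemma yields, for $c^{l}\le r<c^{l-1}$,
\[
\mu\big(B(x,r)\big)\le\sum_{\te:\,\Kcal_{\te}\cap B(x,r)\neq\emptyset}\mu(\Kcal_{\te})\le N(N+1)p^{-l}=N(N+1)c^{ls}\le\lambda r^{s},
\]
with $\lambda:=N(N+1)$, which is \eqref{eq: imp IFS} (pass to closed intervals by shrinking the radius). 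The mass distribution principle then forces $H^{s}(\Kcal)\ge\mu(\Kcal)/\lambda=1/\lambda>0$, so $0<H^{s}(\Kcal)<\infty$ and $\dim_{H}(\Kcal)=s$. Finally, $\Kcal\subseteq\overline{U}$ together with the disjointness of the $\phi_{e}(U)$ gives $\Kcal_{e}\cap\Kcal_{e'}\subseteq\phi_{e}(\partial U)$ for $e\neq e'$, a set which a standard argument shows to be $H^{s}$-null; hence $\mu$ and the normalised restriction $\mu':=H^{s}|_{\Kcal}/H^{s}(\Kcal)$ are both self-similar measures with equal weights $1/p$, so $\mu=\mu'$ by uniqueness of the self-similar measure. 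In particular $\mu(\Kcal_{\te})=c^{ls}=p^{-l}$ and $\mu(\Kcal_{\te_{1}}\cap\Kcal_{\te_{2}})=0$ for distinct $\te_{1},\te_{2}\in E^{l}$, which covers the remaining assertions.

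I expect the one genuinely geometric step --- hence the main obstacle --- to be the covering lemma of the second paragraph: the bounded-overlap property of the OSC, together with the bookkeeping needed to reduce to an open set with $\Kcal\subseteq\overline{U}\subseteq N_{\varepsilon}(\Kcal)$. Once that is secured, the mass distribution estimate and the uniqueness of self-similar measures make everything else routine; and since the statement is classical, one could alternatively simply cite \cite{Hutchinson} as in the excerpt.
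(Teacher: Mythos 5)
The paper does not prove this proposition; it simply cites \cite{Hutchinson} (Prop.\ 5.1(4), Thm.\ 5.3(1)). Your sketch is a correct reconstruction of exactly that classical Moran--Hutchinson argument (covering by cylinders for $H^s(\Kcal)<\infty$, bounded overlap from the OSC, mass distribution principle for $H^s(\Kcal)>0$ and for \eqref{eq: imp IFS}, and uniqueness of the self-similar measure to identify $\mu$ with normalised $H^s|_{\Kcal}$), so the two are in essence the same. One tiny imprecision: the disjointness of $\phi_e(U)$ and $\phi_{e'}(U)$ gives $\Kcal_e\cap\Kcal_{e'}\subseteq\phi_e(\partial U)\cup\phi_{e'}(\partial U)$ rather than $\subseteq\phi_e(\partial U)$, but this changes nothing, and the $\mu$-nullity of $\partial U$ under the OSC is indeed part of the classical theory you are invoking.
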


For the rest of the paper, we fix for all $1 \leq i \leq m$ and $1 \leq j \leq n$, an IFS $\Phi_{ij}=\{\phi_{ij,e}: e\in E_{ij}\}$ satisfying the open set condition, with common contraction ratio $c_{ij}$ and limit set $\Kcal_{ij}$. Let $p_{ij}= \#E_{ij}$ and $s_{ij}= -\log p_{ij}/ \log c_{ij} >0$. Let us define $\Kcal =\{\theta \in \Mat : \theta_{ij} \in \Kcal_{ij} \} $ and $s= \sum_{ij} s_{ij}$. Let $\mu_{ij}$ denote the normalised restriction of $H^{s_{ij}}$ to $\Kcal_{ij}$ and define the measure 
\begin{align}
    \label{eq: def mu}
    \mu= \otimes_{ij} \mu_{ij}
\end{align}
on $\Kcal$. 

Let $\Xi \subset \Mat$ be defined as $\Xi=\{r \in \Mat: r_{ij} \in [c_{ij},c_{ij}^{-1}]\}$. For all $1 \leq i \leq m$ and $1 \leq j \leq n$, we define $\mur_{ij}$ as the measure on $\R$ obtained by pushing forward the measure $\mu_{ij}$ under map $ x \mapsto r_{ij}x$. We also define $\mur = \prod_{ij}\mur_{ij}$, viewed as a measure on $\Mat$.

\subsection{Representation Theory}
\label{subsec: Rep Theory}
For all $1 \leq l \leq d$, define 
$$V_l=\bigwedge^l \R^{d}, \qquad   V= \bigoplus_{l=1}^{d} V_l .$$
Define an action of $G$ on $V$ (resp. $V_l$) via the map $g \mapsto \bigoplus_{l=1}^{d} \bigwedge^l g$ (resp. $g \mapsto \wedge^l g$). Let $\{\bfe_1, \ldots , \bfe_{d}\}$ denote the standard basis of $\R^{d}$. For each index set $I = \{ i_1 < \cdots < i_l \} \subset \{1,\dots,d\}$, we define
  \begin{align} \label{eqn: basis elements}
  	{\bfe}_I := {\bfe}_{i_1} \wedge \cdots \wedge {\bfe}_{i_l}.
  \end{align}
  The collection of monomials $\bfe_I$ with $\#I =l$, gives a basis of $V_l = \bigwedge^l \R^{d}$ for each $ 1\leq l\leq d$.
  For $v\in V$ and each index set $I$, we denote by $v_I \in \R$, the unique value so that $v= \sum_{J}v_J\bfe_J$, where the sum is taken over all index sets $J$. We define a {\em norm} $\|.\|$ on $V$ as
  \begin{align}
  \label{eq: def ||||}
      \|v\|= \max_{I} |v_I|,
  \end{align}
  where the maximum is taken over all index sets $I$. For $g \in G$, we define
    \begin{align}
        \label{eq: def operator norm v}
        \|g\|:= \sup \left\{ \|gv\|: v\in V, \|v\| = 1 \right\}.
    \end{align}
    Also, for any compact subset $Q \subset G$, we define
    \begin{align} \label{eq: norm Q}
          \|Q\| = \sup  \{ \|g\|, \|g^{-1}\|: g \in Q \}.
     \end{align}

For $1 \leq l \leq d$, we define $V_l^+$ to be the subspace of $V_l$ spanned by $\bfe_I$, where $I$ varies over the index sets satisfying $\#(I \cap \{1, \ldots, m\}) = \min\{l,m\}$. Similarly, define $V_l^-$ to be the subspace of $V_l$ spanned by $\bfe_I$, where $I$ varies over the index sets satisfying $\#(I \cap \{1, \ldots, m\}) \neq \min\{l,m\}$. Also define $\pi_{l+} $ (resp. $\pi_{l-}$) as the natural projection map from $V_l$ onto $V_l^+$ (resp. $V_l^-$). Note that for all $\theta \in \Mat$, we have $u(\theta)$ acts trivially on $V_l^+$, i.e., $u(\theta)|_{V_{l}^+} = \mathrm{Id}_{V_{l}^+}$. We also define for $1 \leq l \leq d-1$, $w_l$ as the least $w>0$ such that the subspace $V_{l,w}^+= \{v \in V_{l}^+: g_tv = t^wv\}$ is non-empty. It is easy to see that
\begin{align}
    w_l &= \begin{cases}
         a_m + \cdots +a_{m-l+1} \text{ if } l \leq m, \\
         1-(b_1 + \cdots + b_{l-m}) \text{ otherwise. }
    \end{cases} \nonumber\\
\end{align}

\subsection{Lattice Covolumes}
\label{subsec: Covolume of Lattice}
For a discrete subgroup $\Lambda$ of $\R^{m+1}$ of rank $l \geq 1$, we define $v_{\Lambda} \in V_l/\{\pm 1\}$ as $v_1 \wedge \cdots \wedge v_l$, where $v_1, \ldots, v_l$ is a $\Z$-basis of $\Lambda$. Note that the definition of $v_{\Lambda}$ is independent of the choice of basis $v_1, \ldots, v_l$. We define $\|\Lambda\|$ as
        \begin{align}
            \label{eq: def lattice covol}
            \|\Lambda\| = \|v_{\Lambda}\|,
        \end{align}
        where $\|.\|$ on $V_l$ is defined as in \eqref{eq: def ||||}. We also define $\|\{0\}\|=1$.

For $\Lambda \in \X$, we define $P(\Lambda)$ as the set of all \textbf{primitive} subgroups of the lattice $\Lambda$, i.e, the subgroups $L$ of the lattice $\Lambda$ satisfying $L = \Lambda \cap \mrm{span}_\R (L) $, where $\mrm{span}_\R (L)$ is the smallest vector subspace of $\R^d$ containing $L$.

  \begin{lem}[{\cite[Lem.~5.6]{EMM98}}]
    \label{lem EMM}
    There exists a constant $D > 0$ such that the following inequality holds. For all $\Lambda \in \X$ and for all $\Lambda_1, \Lambda_2 \in P(\Lambda)$, we have:
    \begin{align}
        \label{eq: EMM98}
        \|\Lambda_1 \cap \Lambda_2\| \|\Lambda_1 + \Lambda_2\| \leq D \|\Lambda_1\| \|\Lambda_2\|.
    \end{align}
\end{lem}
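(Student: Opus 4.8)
The plan is to strip away the combinatorial notation and recognize \eqref{eq: EMM98} as a restatement of the classical Hadamard--Fischer determinant inequality, after (i) replacing the sup-norm $\|\cdot\|$ on $V_l$ by the Euclidean norm at the cost of a dimension-only constant, and (ii) identifying the Euclidean norm of a wedge vector with a lattice covolume.

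\emph{Step 1 (reduction to covolumes).} For $v\in V_l$ one has $\|v\|\le |v|_2\le \binom{d}{l}^{1/2}\|v\|$, where $|\cdot|_2$ is the Euclidean norm in the orthonormal basis $(e_I)_{\#I=l}$ of \eqref{eqn: basis elements}; and for a discrete subgroup $L\subset\R^d$ of rank $l$ we have $|v_L|_2=\operatorname{covol}(L)$, the covolume of $L$ inside $\operatorname{span}_\R L$ (with $\operatorname{covol}(\{0\})=1$, matching $\|\{0\}\|=1$). Thus it suffices to prove
$$\operatorname{covol}(\Lambda_1\cap\Lambda_2)\,\operatorname{covol}(\Lambda_1+\Lambda_2)\ \le\ \operatorname{covol}(\Lambda_1)\,\operatorname{covol}(\Lambda_2),$$
as the sup-norm version then follows with $D=\max_{0\le l\le d}\binom{d}{l}$, which depends only on $d$.

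\emph{Step 2 (a compatible basis).} Put $\Lambda_0=\Lambda_1\cap\Lambda_2$. Because $\Lambda_1,\Lambda_2$ are primitive in $\Lambda$, the group $\Lambda_0$ is primitive inside each of $\Lambda_1,\Lambda_2$: if $v\in\Lambda_i$ and $kv\in\Lambda_0$ then $kv\in\operatorname{span}_\R\Lambda_{3-i}$, hence $v\in\Lambda\cap\operatorname{span}_\R\Lambda_{3-i}=\Lambda_{3-i}$, so $v\in\Lambda_0$. Hence I can pick a $\Z$-basis $f_1,\dots,f_r$ of $\Lambda_0$ and extend it to $\Z$-bases $f_\bullet,g_1,\dots,g_{s_1}$ of $\Lambda_1$ and $f_\bullet,h_1,\dots,h_{s_2}$ of $\Lambda_2$. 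The tuple $f_\bullet,g_\bullet,h_\bullet$ then generates $\Lambda_1+\Lambda_2$ and has $r+s_1+s_2=\operatorname{rk}\Lambda_1+\operatorname{rk}\Lambda_2-r=\operatorname{rk}(\Lambda_1+\Lambda_2)$ members (using that $\operatorname{span}_\R\Lambda_1\cap\operatorname{span}_\R\Lambda_2$ is $\Lambda$-rational of dimension $r$), so it is in fact a $\Z$-basis of $\Lambda_1+\Lambda_2$.

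\emph{Step 3 (Hadamard--Fischer and conclusion).} Let $M$ be the Gram matrix of the full tuple $(f_\bullet,g_\bullet,h_\bullet)$ and, for an index subset $\alpha$, let $M[\alpha]$ be the corresponding principal submatrix. Then $\operatorname{covol}(\Lambda_0)^2=\det M[F]$, $\operatorname{covol}(\Lambda_1)^2=\det M[F\cup G]$, $\operatorname{covol}(\Lambda_2)^2=\det M[F\cup H]$, and $\operatorname{covol}(\Lambda_1+\Lambda_2)^2=\det M[F\cup G\cup H]$. Applying the Hadamard--Fischer inequality for positive (semi)definite matrices with $\alpha=F\cup G$, $\beta=F\cup H$ (so $\alpha\cap\beta=F$ and $\alpha\cup\beta=F\cup G\cup H$) yields
$$\det M[F]\cdot\det M[F\cup G\cup H]\ \le\ \det M[F\cup G]\cdot\det M[F\cup H],$$
which is precisely the squared form of the inequality in Step 1; taking square roots completes the proof. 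The degenerate cases $r=0$ or $\Lambda_i=\{0\}$ are absorbed by the conventions $\operatorname{covol}(\{0\})=1$ and "empty Gram determinant $=1$".

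\emph{Main obstacle.} There is no genuinely hard step here; the work is in setting things up so that Hadamard--Fischer applies verbatim. The two points needing care are: $\Lambda_1+\Lambda_2$ is generally \emph{not} primitive in $\Lambda$, which is harmless since we only need an upper bound on $\operatorname{covol}(\Lambda_1+\Lambda_2)$ and the Gram-determinant identity holds for any lattice; and one must verify that $\Lambda_0$ is primitive inside $\Lambda_1$ and $\Lambda_2$, which is exactly where the hypothesis $\Lambda_1,\Lambda_2\in P(\Lambda)$ is used and is what makes the simultaneous basis extension legitimate. If one prefers not to cite Hadamard--Fischer by name, the same estimate can be obtained by projecting orthogonally off $\operatorname{span}_\R\Lambda_0$ — the images of $\Lambda_1$ and $\Lambda_2$ then span subspaces meeting only in $0$ — and applying submultiplicativity $|x\wedge y|\le|x|\,|y|$ of the exterior product; this is the same argument in geometric dress.
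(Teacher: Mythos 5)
Your proposal is correct, and it does more than the paper does. The paper's ``proof'' of Lemma~\ref{lem EMM} consists entirely of the remark that follows it: cite \cite{EMM98}, Lem.~5.6 for the Euclidean-covolume version with $D=1$, and invoke equivalence of norms on the finite-dimensional spaces $V_l$ to pass to the sup-norm $\|\cdot\|$ with some $D$. Your Step~1 is precisely that norm-equivalence reduction, made explicit with the constant $D=\max_{0\le l\le d}\binom{d}{l}$. Your Steps~2--3, however, go further and actually reprove the Euclidean inequality that the paper outsources to \cite{EMM98}: you observe that $\Lambda_0=\Lambda_1\cap\Lambda_2$ is primitive inside each $\Lambda_i$ (this is exactly where the hypothesis $\Lambda_1,\Lambda_2\in P(\Lambda)$ enters, and is the only nontrivial use of it), extend a basis of $\Lambda_0$ compatibly to $\Lambda_1$, $\Lambda_2$, and hence to $\Lambda_1+\Lambda_2$ by a rank count using rationality of $\mathrm{span}_\R\Lambda_1\cap\mathrm{span}_\R\Lambda_2$, and then read off the inequality from the Hadamard--Fischer (Koteljanskii) determinant inequality applied to the Gram matrix of the combined tuple. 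The argument is complete: the rank count is justified because the intersection of two $\Lambda$-rational subspaces is $\Lambda$-rational, a generating set of a free $\Z$-module whose cardinality equals the rank is a basis, and the degenerate cases are correctly absorbed by the conventions. So your route is self-contained where the paper's is a citation; the paper's approach buys brevity, yours buys transparency and avoids the reader having to chase the reference.
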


\begin{rem}
    In \cite{EMM98}, inequality \eqref{eq: EMM98} is established with $D = 1$, but the norm $\|\Lambda\|$ is defined differently. There, $\|\Lambda\|$ is taken as $\|v_{\Lambda}\|$, where $\|.\|$ on $V_l = \wedge^l \R^d$ is the norm induced by the Euclidean norm on $\R^d$. Since any two norms on a finite-dimensional vector space are equivalent, it follows that \eqref{eq: EMM98} holds for some sufficiently large $D$ under our current definition of $\|\Lambda\|$.
\end{rem}

%==================================================================================================================================================================================

\section{The Critical Exponent}
\label{sec: Critical Exponent}

For each $1 \leq l \leq d-1$, we define the $l$-th critical exponent $\zeta_l(\mu)$ of a measure $\mu$ on $\Mat$ as the supremum of all $\gamma \geq 0$ for which there exists a constant $C_{\gamma,l}' > 0$ such that, for every $v = v_1 \wedge \cdots \wedge v_l \in V_l$ with $\|v\| = 1$ and $r \in \Xi$, the following inequality holds:
\begin{align}
   \int_{\Mat} \frac{1}{\|\pi_{l+}(u(\theta)v)\|^{\gamma}} \, d\mur(\theta) < C_{\gamma,l}'.
\end{align}

\begin{prop}
\label{Critical Exponent is positive}
    For all $1 \leq l \leq d-1$, we have $\zeta_l(\mu) >0$, where $\mu$ is defined as in \eqref{eq: def mu}.
\end{prop}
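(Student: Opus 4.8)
The plan is to reduce the multilinear statement to a single scalar integral bound by exploiting the product structure of the measure $\mur$ and the explicit form of the $u(\theta)$-action on $V_l^+$. First I would fix $v = v_1 \wedge \cdots \wedge v_l$ with $\|v\|=1$ and unwind $\pi_{l+}(u(\theta)v)$ in coordinates. Since $u(\theta)$ fixes $e_1,\dots,e_m$ and sends $e_{m+j}$ to $e_{m+j} + \sum_i \theta_{ij} e_i$, each coefficient $(u(\theta)v)_I$ with $\#(I\cap\{1,\dots,m\})=\min\{l,m\}$ is a polynomial in the entries $\theta_{ij}$ whose coefficients are the Plücker coordinates $v_J$; crucially, because $\|v\|=1$, at least one such coefficient has absolute value $1$. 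Thus $\theta \mapsto \|\pi_{l+}(u(\theta)v)\|$ is (up to a bounded multiplicative constant, uniform over $r\in\Xi$) bounded below by $\max$ of absolute values of a family of polynomials, one of which is of the form $\pm\prod_{(i,j)\in S}\theta_{ij} + (\text{lower order in that monomial})$ after a suitable column/row selection — i.e. it is a nonzero polynomial of degree $\le \min\{l,m\}\cdot\min\{l,n\}$ or so with at least one coefficient of modulus $1$.

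The key analytic input is then a \emph{uniform anti-concentration / Remez-type estimate} for the product measure $\mur = \prod_{ij}\mur_{ij}$: there exist $\gamma_0 > 0$ and $C>0$ such that for every nonzero polynomial $P$ in the variables $\{\theta_{ij}\}$ of degree at most some fixed $N$ with $\max$-coefficient equal to $1$,
\[
\int_{\Mat} \frac{1}{|P(\theta)|^{\gamma_0}}\, d\mur(\theta) \le C.
\]
This follows from the fact, implied by Proposition \ref{prop: important IFS}, that each $\mu_{ij}$ — hence each $\mur_{ij}$, since pushing forward under $x\mapsto r_{ij}x$ with $r_{ij}\in[c_{ij},c_{ij}^{-1}]$ only distorts things boundedly — is \emph{Federer (doubling)} and \emph{absolutely decaying}: inequality \eqref{eq: imp IFS} gives $\mu_{ij}([x-y,x+y]) \le \lambda y^{s_{ij}}$ with $s_{ij}>0$, and absolute decay is a standard consequence of the open set condition for self-similar measures. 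For such "absolutely friendly" measures the $(C,\alpha)$-good property for polynomials of bounded degree holds with constants depending only on $N$ and the measure, and a $(C,\alpha)$-good function with nonzero normalized sup is automatically in $L^{\gamma}$ for all $\gamma<\alpha$ with a uniform bound — this is the standard Kleinbock–Margulis/Kleinbock–Lindenstrauss–Weiss circle of estimates. I would cite this and combine across the finitely many monomials (the $\max$ of $(C,\alpha)$-good functions is $(C',\alpha)$-good), and take $\gamma$ a bit below $\alpha$, uniformly over $r\in\Xi$ using compactness of $\Xi$ and the boundedness of the distortion.

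The main obstacle — and the place requiring genuine care rather than citation — is establishing the \emph{uniformity} of the constants over all $v$ with $\|v\|=1$ and all $r\in\Xi$ simultaneously. The subtlety is that as $v$ varies, the identity of the "large" Plücker coordinate changes and the polynomial $P_v(\theta) := \|\pi_{l+}(u(\theta)v)\|$-type lower bound is only a $\max$ over a $v$-dependent finite family; one must check that the family of polynomials arising this way, once normalized, lies in a compact set of coefficient vectors bounded away from $0$, so that a single pair $(C,\alpha)$ works. Here I would argue that each component polynomial has degree bounded by a constant depending only on $m,n,l$, its coefficients are linear in $v$ hence bounded by $1$, and — by the wedge-product/Plücker relations — for $\|v\|=1$ the maximum over the component polynomials of the sup-norm on a fixed neighborhood is bounded below by a positive constant independent of $v$. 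That last point is where one genuinely uses that $v$ is decomposable (a pure wedge), not an arbitrary element of $V_l$; I expect to spend most of the proof making this quantitative, after which the measure-theoretic estimates are routine given Proposition \ref{prop: important IFS} and the open set condition.
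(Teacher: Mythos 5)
Your high-level strategy is the same as the paper's: express $\|\pi_{l+}(u(\theta)v)\|$ as a $\max$ of multilinear polynomials in the entries $\theta_{ij}$ whose coefficients are Pl\"ucker coordinates of $v$, exploit the product structure of $\mu^{(r)}$, and reduce to one-variable anti-concentration estimates coming from the scaling inequality $\mu_{ij}([x-y,x+y]) \le \lambda y^{s_{ij}}$ of Proposition \ref{prop: important IFS}. The difference is that the paper builds the required anti-concentration estimate from scratch via an induction on $p+q-l$ (Lemma \ref{lem: General Critical Expo}, feeding into Lemmas \ref{lem: Integral estimate} and \ref{lem: Finding the Critical Exponent}), peeling off one row of $\theta$ at a time, while you want to outsource it to the $(C,\alpha)$-good machinery.

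That outsourcing is the genuine gap. The Kleinbock--Lindenstrauss--Weiss theory gives $(C,\alpha)$-goodness of \emph{affine} functions for Federer and absolutely decaying measures; it does not give $(C,\alpha)$-goodness of arbitrary polynomials of bounded degree for such measures, and in fact that statement is false in general (a fractal can concentrate near the zero set of a degree-$2$ polynomial while being absolutely decaying and Federer). The Kleinbock--Margulis polynomial result is for Lebesgue measure. What saves the day in the present setting is precisely the combination of \emph{multilinearity} of the relevant polynomials and the \emph{product} structure of $\mu^{(r)}$: one fixes all but one entry $\theta_{ij}$, obtains an affine function in that one variable, and applies the one-dimensional decay. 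But the coefficient of that affine function is itself a multilinear polynomial in the other entries and can be small; controlling it is exactly the inductive step (Case~2 of Lemma \ref{lem: General Critical Expo}, with the sets $R_0$ and $R_I$ and the $\epsilon^{1/2}$ threshold). Your proposal collapses this delicate induction into a citation, but no off-the-shelf $(C,\alpha)$-good statement covers it.

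A smaller but related imprecision: the index set $J$ with $|v_J|=1$ is a coefficient of some polynomial $(u(\theta)v)_I$ with $I \in S^+$, but the monomial it multiplies need not be the top-degree product nor the constant term, so one cannot frame it as ``leading monomial of modulus one plus lower order.'' The paper's Case~1/Case~2 dichotomy (is the large coordinate in $V_l^+$ or not?) is exactly what disentangles this, and it is not a uniformity-over-$v$ issue (which you flagged) so much as a structural one. You would end up rewriting essentially the paper's Lemma \ref{lem: General Critical Expo} to make the argument go through, so the proposal in its current form is incomplete.
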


We will need the following lemmas to prove Proposition \ref{Critical Exponent is positive}.
\begin{lem}
\label{lem: General Critical Expo}
    Suppose $p, q, l \in \N$ are such that $l \leq p + q$. Let $S$ be the set of all subsets of $\{1, \ldots, p + q\}$ of cardinality $l$. Also, define 
    $$
    S^+ = \{I \in S : \#(I \cap \{1, \ldots, p\}) = \min\{p, l\}\}.
    $$ 
    Fix $z_1, \ldots, z_{p+q} \in \R^l$ such that
    $$
    \max\{|z_I| : I \in S\} = 1,
    $$
    where $z_I = z_{i_1} \wedge \cdots \wedge z_{i_l} \in \R$ for $I = \{i_1 < \cdots < i_l\}$. For $\theta \in \M_{p \times q}(\R)$, we define
    \begin{align*}
         z_i^\theta = \begin{cases}
            z_i + \sum_{1 \leq j \leq q} \theta_{ij} z_{p+j} & \text{if } i \leq p, \\
            z_i & \text{otherwise},
        \end{cases} \\
        z_I^\theta = z_{i_1}^\theta \wedge \cdots \wedge z_{i_l}^\theta \text{ for } I = \{i_1 < \cdots < i_l\}.
    \end{align*}
    Suppose $\nu^\rho = \prod_{\substack{1 \leq i \leq p \\ 1 \leq j \leq q}} \nu_{ij}^\rho$ is a family of probability measures on $\M_{p \times q}(\R)$, where $\rho$ varies over a fixed set $F$. Assume there exist constants $\lambda > 0$, $C_1 > 0$, and $C_2 > 0$ such that for all $1 \leq i \leq p$, $1 \leq j \leq q$, and $\rho \in F$, the following hold for all $x \in \R$ and $y>0$:
    \begin{align}
        \label{eq: n1}
        \nu_{ij}^\rho([x - y, x + y]) &\leq C_1 y^\lambda, \\
        \label{eq:n5} 
        \supp(\nu_{ij}^\rho) &\subset [-C_2, C_2].
    \end{align}
    Then there exist constants $C_3 > 0$ and $\delta > 0$, independent of $z_1, \ldots, z_{p+q}$, such that for all $\rho \in F$ and $0 < \e < 1$, we have
    \begin{align}
        \label{eq: n2}
        \nu^\rho\left(\left\{\theta \in \M_{p \times q}(\R) : \max\{|z_I^\theta| : I \in S^+\} \leq \e \right\}\right) \leq C_3 \e^\delta.
    \end{align}
\end{lem}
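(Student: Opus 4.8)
\textbf{Proof plan for Lemma \ref{lem: General Critical Expo}.}

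The plan is to reduce the multilinear estimate on $\max\{|z_I^\theta| : I \in S^+\}$ to a single linear estimate in one of the matrix entries, and then to induct. Since $\max\{|z_I| : I \in S\} = 1$, fix an index set $I_0 \in S$ realizing the maximum, so $|z_{I_0}| = 1$. If $I_0 \in S^+$ we are already comparing against a large quantity; the real issue is when $I_0 \notin S^+$, i.e.\ when $\#(I_0 \cap \{1,\dots,p\}) < \min\{p,l\}$, which forces $l > \#(I_0 \cap \{1,\dots,p\})$ and means there is at least one ``row index'' $i^\ast \le p$ with $i^\ast \notin I_0$ while some ``column index'' $p + j^\ast \in I_0$. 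The key algebraic observation is that $z_{I_0}^\theta$, as a function of the single variable $\theta_{i^\ast j^\ast}$ (with all other entries frozen), is affine: expanding $z_{i^\ast}^\theta = z_{i^\ast} + \sum_j \theta_{i^\ast j} z_{p+j}$ into the wedge $z_{I'}^\theta$ where $I' = (I_0 \setminus \{p+j^\ast\}) \cup \{i^\ast\}$, the coefficient of $\theta_{i^\ast j^\ast}$ is exactly $\pm z_{I''}^\theta$ for an appropriate $I''$, and by iterating we can move from $I_0$ towards a set in $S^+$. So the strategy is: some nonzero multilinear combination of the $z_I^\theta$, $I \in S^+$, has a coefficient (as a polynomial in the entries $\theta_{ij}$) that is bounded below, via $|z_{I_0}| = 1$ and the fact that $\supp(\nu_{ij}^\rho) \subset [-C_2, C_2]$ keeps all the other factors bounded.

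Concretely, I would argue as follows. First, there is a universal constant $c_0 > 0$ (depending only on $p,q,l,C_2$) such that $\max\{|z_I^\theta| : I \in S^+\} \ge c_0 \cdot |P(\theta)|$ where $P$ is a specific nonzero polynomial in the entries of $\theta$ with bounded-degree and with at least one coefficient of absolute value $\ge 1$ (coming from $|z_{I_0}|=1$); this is the linear-algebra core, proved by the repeated ``swap one row index for one column index'' expansion described above, using that each such swap introduces exactly one new monomial whose coefficient is a $z$-wedge of one higher ``$+$-rank''. Second, I invoke a standard anticoncentration fact for polynomials against measures with the Frostman-type bound \eqref{eq: n1}: if $P(\theta)$ is a polynomial of degree at most $D_0$ in each variable with some coefficient $\ge 1$ in absolute value, and $\nu^\rho = \prod \nu_{ij}^\rho$ with each $\nu_{ij}^\rho$ satisfying $\nu_{ij}^\rho([x-y,x+y]) \le C_1 y^\lambda$ and supported in $[-C_2,C_2]$, then $\nu^\rho(\{|P| \le t\}) \le C' t^{\delta'}$ for constants $C', \delta' > 0$ depending only on $D_0, \lambda, C_1, C_2$ and the number of variables. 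This second ingredient is itself proved by induction on the number of variables: freeze all but one variable $\theta_{ij}$, write $P$ as a one-variable polynomial in $\theta_{ij}$, and split on whether its leading-in-$\theta_{ij}$ coefficient (a polynomial in the remaining variables) is $\ge \eta$ or not — in the first case use the one-variable anticoncentration estimate $\nu_{ij}^\rho(\{|Q(\theta_{ij})| \le t\}) \le C(D_0) C_1 (t/\eta)^{\lambda/D_0}$ for a degree-$\le D_0$ polynomial $Q$ with leading coefficient $\ge \eta$ (proved by factoring $Q$ and covering near its roots), and in the second case apply the inductive hypothesis to the remaining variables; optimizing over $\eta$ gives the claim.

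Combining the two ingredients with $t = \e/c_0$ yields \eqref{eq: n2} with $C_3$ and $\delta$ depending only on $p,q,l,\lambda,C_1,C_2$, hence independent of $z_1,\dots,z_{p+q}$ as required. The main obstacle is the first ingredient: making precise and fully rigorous the claim that the ``worst'' maximal wedge $z_{I_0}$ propagates, through the affine dependence on successive entries, to a genuinely nonzero polynomial lower bound for $\max_{I \in S^+}|z_I^\theta|$ with a coefficient bounded away from $0$ — one has to track carefully that at each swap the new leading coefficient is again a coordinate wedge $z_{I'}$ (not a cancellation-prone sum) and that after $\#(I_0 \cap \{p+1,\dots,p+q\}) - (l - \min\{p,l\})$ swaps one lands inside $S^+$. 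The second ingredient (polynomial anticoncentration against Frostman measures) is routine but needs to be stated carefully so that all constants are uniform over the parameter set $F$, which holds automatically since the hypotheses \eqref{eq: n1}–\eqref{eq:n5} are uniform in $\rho$.
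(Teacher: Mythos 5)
Your plan is correct and would yield the lemma, but it is organized differently from the paper's argument, and the comparison is worth spelling out. The paper proves the statement by a single induction on the codimension $p+q-l$: in the inductive step it normalizes, drops one row index (passing from $M_{p\times q}$ to $M_{(p-1)\times q}$), applies the inductive hypothesis to the resulting $S'^+$-max, and then, on the region where that max is already $\geq \e^{1/2}$, uses a one-variable Frostman estimate in a single entry $\theta_{1i}$ by exhibiting the relevant wedge as an affine function of $\theta_{1i}$ with slope $z_I^\theta$. So the linear-algebra of the wedges and the measure estimate are interwoven in one induction. You instead factor the argument into two separate pieces: (a) a purely algebraic reduction showing that $\max_{I\in S^+}|z_I^\theta|\geq|P(\theta)|$ for a specific multilinear polynomial $P$ (in fact you can take $P=z_I^\theta$ for a single suitable $I\in S^+$, with $c_0=1$) which has a monomial whose coefficient is $\pm z_{I_0}$ with $|z_{I_0}|=1$, and (b) a general anticoncentration lemma for multilinear polynomials with a unit coefficient against product measures satisfying the Frostman bound \eqref{eq: n1} on a compact set. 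The decomposition is cleaner and more reusable, at the cost of invoking (or re-proving) the general anticoncentration fact, whereas the paper's version is self-contained.

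Two points in your sketch deserve tightening. First, in ingredient (a) the precise statement is: fix $I_0$ with $|z_{I_0}|=1$; then there exists $I\in S^+$ such that, in the expansion of $z_I^\theta=\bigwedge_{i\in I}z_i^\theta$, the monomial obtained by replacing exactly the row indices in $I\setminus I_0$ by the column indices in $I_0\setminus I$ has coefficient $\pm z_{I_0}$; no cancellation occurs because distinct monomials in the entries $\theta_{ij}$ correspond to distinct replacement patterns (each monomial records exactly which pairs $(i,j)$ were used, and repeated $j$'s give a vanishing wedge). You should verify, as you indicate, that such an $I\in S^+$ always exists (this is a short counting argument: extend $I_0\cap\{1,\dots,p\}$ to a set of $\min\{p,l\}$ row indices and keep $l-\min\{p,l\}$ of the column indices of $I_0$). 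Second, in ingredient (b) the induction on variables should split on the coefficient $c_{k_0}(\theta')$ of $\theta_{ij}^{k_0}$ where $k_0$ is the power of $\theta_{ij}$ in the distinguished unit-coefficient monomial, not the leading coefficient in $\theta_{ij}$ (which could be identically zero); with that change, the one-variable sublevel set estimate on the compact interval $[-C_2,C_2]$ combined with the inductive hypothesis for $c_{k_0}$ and an optimization over the threshold $\eta$ closes the induction. Since $P=z_I^\theta$ is multilinear (degree $\leq 1$ in each $\theta_{ij}$), the one-variable estimate is actually just the linear case and the constants are easy to make uniform in $\rho$. With those adjustments, your proof is complete and correct.
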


\begin{proof}
    We will use induction on the value of the difference $p+q-l$ to prove the statement. For the base case, where $p+q=l$, it is straightforward to observe that $S^+ = S$ and $z_I^\theta = z_I$ for all $I$. Thus, \eqref{eq: n2} holds trivially. 

Now, assume that the theorem is valid for all $p, q, l$ satisfying $p+q-l < k$. We proceed to prove the case where $p+q-l = k$. Before moving further, we introduce some necessary notation.

Define  
\begin{align}
    \label{eq: w 6} 
    v_{z^\theta} = ((z^\theta_1)_1\bfe_1 + \cdots + (z^\theta_{p+q})_1 \bfe_{p+q})\wedge \cdots \wedge ((z^\theta_1)_l\bfe_1 + \cdots + (z^\theta_{p+q})_l \bfe_{p+q}) \in \wedge^l\R^{p+q},
\end{align}
where $\bfe_1, \ldots, \bfe_{p+q}$ is the standard basis of $\R^{p+q}$. Observe that the collection of all $\bfe_I := \bfe_{i_1}\wedge \cdots \wedge \bfe_{i_l}$, for index sets $I = \{i_1 < \cdots < i_l\}$, forms a basis of $\wedge^l\R^{p+q}$. For $x \in \wedge^l\R^{p+q}$, we denote by $x_I \in \R$ the unique real numbers such that $x = \sum_{I \in S}x_I \bfe_I$. Then, it is straightforward to see that  
\begin{align}
    \label{eq: n 100}
    (v_{z^\theta})_I = z^\theta_I.
\end{align}

We also define the norm $\|.\|$ on $\wedge^l \R^{p+q}$ as $\|x\|= \max_{I \in S}|x_I|$. Additionally, consider the action of $\SL_{p+q}(\R)$ on $\wedge^l \R^{p+q}$ via the map $g \mapsto \wedge^l g$. For $\theta \in \M_{p \times q}(\R)$, define $u'(\theta)$ as  
\begin{align}
    \label{eq: n 7}
    u'(\theta) = \begin{pmatrix}
        I_p & \theta \\ & I_q
    \end{pmatrix}.
\end{align}
It is then easy to verify that  
\begin{align}
    \label{eq:n 8}
    v_{z^\theta} = u'(\theta)v_z.
\end{align}

Also, define $C_4 > 0$ as follows:
\begin{align}
    \label{eq:n 9}
    C_4 = \sup\{\|u'(\theta)x\| :\, \theta \in \M_{p \times q}([-C_2, C_2]),\;  x\in \wedge^l \R^{p+q}, \; \|x\|\leq 1\} < \infty.
\end{align}

    We divide the statement into two cases.

\noindent {\bf Case 1:} For all $I \notin S^+$, we have $|z_I| \leq 1/2C_4$. Define $\sigma^+: \wedge^l\R^{p+q} \rightarrow \sspan\{\bfe_I: I \in S^+\}$ and $\sigma^-: \wedge^l\R^{p+q} \rightarrow \sspan \{\bfe_I: I \notin S^+\}$ as the natural projection maps. Clearly, we have $\|\sigma^+(v_z)\|=1$ and $\|\sigma^-(v_z)\| \leq 1/2C_4$. 

In this case, for all $\theta \in \M_{p \times q}([-C_2, C_2])$, the following holds:
\begin{align*}
    \max_{I \in S^+}|z_I^\theta| &= \max_{I \in S^+}|(v_{z^\theta})_I| \quad \text{using \eqref{eq: n 100}} \\
    &= \|\sigma^+(v_{z^\theta})\| \\
    &= \|\sigma^+(u'(\theta)v_{z})\| \quad \text{using \eqref{eq:n 8}} \\
    &= \|\sigma^+(u'(\theta) (\sigma^+(v_{z}) + \sigma^-(v_{z})) )\| \\
    &\geq \|\sigma^+(u'(\theta) (\sigma^+(v_{z}))) \| - \| \sigma^+(u'(\theta) (\sigma^-(v_{z}))) \| \\
    &\geq \|\sigma^+(v_z)\| - \| u'(\theta) (\sigma^-(v_{z})) \| \quad \text{as $u'(\theta)$ acts trivially on $\sspan \{\bfe_I: I \in S^+\}$} \\
    &\geq 1 - C_4 \|\sigma^-(v_{z})\| \\
    &\geq 1 - \frac{1}{2} = \frac{1}{2}.
\end{align*}
Thus, \eqref{eq: n2} holds for all $\gamma > 0$ with a corresponding $C_3 \geq 2^\gamma$.

    \noindent {\bf Case 2:} In this case, there exists an $I \notin S^+$ such that $|z_I| > 1/2C_4$.  Assume that the maximum $\max_{I \notin S^+}|z_I|> 1/2C_4$ is achieved for the index set $J$. Since $J \notin S^+$, we have $\{1, \ldots, p\} \setminus J \neq \emptyset$. For simplicity, assume that $1 \notin J$. Let
    $$
    c= \max\{|z_I|: 1 \notin I, I \in S \}> \frac{1}{2} C_4.
    $$
    Now, consider the $p+q-1$ vectors $c^{-1/l}z_2, \ldots, c^{-1/l}z_{p+q} \in \R^l$. These vectors satisfy the following condition:  
\[
\max\{ |c^{-1/l}z_{i_1} \wedge \cdots \wedge c^{-1/l}z_{i_l}| : 1 \notin I, I \in S \} = 1.
\]
For all $\rho \in F$, define the measure $\nu^{\rho'} := \prod_{\substack{2 \leq i \leq p \\ 1 \leq j \leq q}} \nu^\rho_{ij}$ on $\M_{(p-1) \times q}(\R)$. Let 
$$S^{'+} = \{I: \#(I \cap \{2, \ldots, p\}) = \min\{p-1, l\}\}.$$ Since $(p-1) + q - l = k-1$, the induction hypothesis applies, giving constants $\delta_1 > 0$ and $C_5$ such that for all $\rho \in F$ and $\epsilon > 0$, we have
\begin{align*}
    \nu^{\rho'}(\{\theta \in \M_{(p-1) \times q}(\R):  \max\{c^{-1}|\wedge_{j=1}^l (z_{i_j} + \sum_{m=1}^q \theta_{i_j,m} z_m)|: I= \{i_1< \cdots < i_l\} \in S^{'+}\} \leq \epsilon \}) \leq C_5\epsilon^{2\delta_1},
\end{align*}
which implies
\begin{align}
\label{eq: n 10}
    \nu^{\rho}(\{\theta \in \M_{p\times q}(\R):  \max\{|z_I^\theta|: I \in S^{'+}\} \leq \epsilon^{1/2} \}) \leq C_5c^{-2{\delta_1}}\epsilon^{\delta_1}.
\end{align}
Now, if $l \leq p-1$, then $S^{'+} \subset S^+$, and \eqref{eq: n 10} implies \eqref{eq: n2} for all $\delta \leq \delta_1$. Hence, we may assume that $l \geq p$. In this case, the set $R= \{\theta \in \M_{p \times q}(\R):  \max\{|z_I^\theta|: I \in S^+\} \leq \epsilon \}$ is contained in the union $R_0 \cup (\cup_{I \in S^{'+}} R_I)$, where
\begin{itemize}
    \item $R_0 = \left\{\theta \in \mathrm{M}_{p \times q} : 
     \max\{|z_I^\theta|: I \in S^{'+}\} \leq \epsilon^{1/2}
    \right\}$,
    \item For each $I \in S^{'+}$, $R_I$ is the set of all $\theta \in R \setminus R_0$ such that  $|z_I^\theta| \geq \epsilon^{1/2}$.
\end{itemize}
Next, let us estimate the measure of $R_I$ for a fixed $I$. Fix $1 \leq i \leq q$ such that $p+i \in I$. For $\theta \in R_I$, the condition $\max\{|z_J^\theta|: J \in S^+\} \leq \epsilon$ implies $|z_{I \cup \{1\} \setminus \{p+i\} }^\theta| \leq \epsilon$. Observe that
\begin{align}
    z_{I \cup \{1\} \setminus \{p+i\} }^\theta &= (z_{1} + \sum_{j=1}^q \theta_{1j}z_{p+j}) \wedge z_{I \setminus \{p+i\}}^\theta \nonumber \\
    &= z_{1} \wedge z_{I \setminus \{p+i\}}^\theta + \sum_{j=1}^q \theta_{1j}z_{p+j} \wedge z_{I \setminus \{p+i\}}^\theta \nonumber \\
    &= z_{1} \wedge z_{I \setminus \{p+i\}}^\theta + \sum_{\substack{j \in \{1, \ldots, q\} \\ p+j \notin I \setminus \{p+i\}}} \theta_{1j} \alpha_{j} z_{(I  \setminus \{p+i\}) \cup \{p+j\}}^\theta, \label{eq: n 11}
\end{align}
where $\alpha_j \in \{\pm 1\}$ such that $z_{p+j} \wedge z_{I \setminus \{p+i\}}^\theta= \alpha_{j} z_{(I  \setminus \{p+i\}) \cup \{p+j\}}^\theta$ for all $j \in \{1, \ldots, q\}$ satisfying $p+j \notin I \setminus \{p+i\} $.

Notice that $z_{(I  \setminus \{p+i\}) \cup \{p+j\}}^\theta$ is independent of $\theta_{1l}$ for $1 \leq l \leq q$. Thus, from \eqref{eq: n 11}, for fixed $\theta_{l_1l_2}$ with $(l_1,l_2) \neq (1,i)$, the set of all $\theta_{1i}$ such that $\theta$ belongs to $R_I$ lies in an interval of radius at most $\epsilon/|z_I^\theta| \leq \epsilon^{1/2}$. By \eqref{eq: n1}, this interval has $\nu^\rho_{1i}$ measure less than $C_1\epsilon^{\lambda/2}$. Using Fubini's theorem, we get $\nu^\rho(R_I) \leq C_1 \epsilon^{\lambda/2}$.

Thus, we have $\nu^\rho(R) \leq \nu^\rho(R_0) + \sum_{I \in S^{'+}} \nu^\rho(R_I) \ll \e^{\delta_1} + \e^{\lambda/2}$, where implied constant is independent of $z$. Thus, in this case, $\gamma = \min\{\lambda/2, \delta_1\}$ works. By induction, the lemma holds.

\end{proof}

\begin{lem}
\label{lem: Integral estimate}
For every $1 \leq l \leq d-1$, there exist constants $\gamma > 0$ and $L_{\gamma} > 0$ such that the following holds. For any $v = v_1 \wedge \cdots \wedge v_l \in V_l$ with $\|v\| = 1$ and any $r \in \Xi$, we have:
\begin{align}
\label{eq: u 0}
    \mur\left(\left\{ \theta \in \Mat : \|\pi_{l+}(u(\theta)v)\| \leq \epsilon \right\}\right) \leq L_{\gamma} \epsilon^{\gamma}.
\end{align}
\end{lem}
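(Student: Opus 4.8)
The plan is to reduce Lemma \ref{lem: Integral estimate} to Lemma \ref{lem: General Critical Expo} by identifying the correct setup. Write $v = v_1 \wedge \ldots \wedge v_l \in V_l = \wedge^l \R^d$ with $d = m+n$, and expand each $v_k = \sum_{i=1}^d (v_k)_i e_i$ in the standard basis. Setting $z_k = ((v_k)_1, \ldots, (v_k)_l)^{\top} \in \R^l$ for $1 \leq k \leq d$, we get, exactly as in \eqref{eq: w 6}--\eqref{eq: n 100}, that $v_I = z_I := z_{i_1} \wedge \ldots \wedge z_{i_l}$ for each index set $I = \{i_1 < \ldots < i_l\}$ of size $l$, so that $\|v\| = \max_I |z_I| = 1$. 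With $p = m$, $q = n$, the matrix $u(\theta)$ restricted to $V_l$ acts on the coordinates exactly as the map $\theta \mapsto z^\theta$ in Lemma \ref{lem: General Critical Expo}, i.e. $(u(\theta)v)_I = z_I^\theta$; and the space $V_l^+$ is, by definition, the span of $e_I$ with $\#(I \cap \{1,\ldots,m\}) = \min\{l,m\}$, which is precisely $\mathrm{span}\{e_I : I \in S^+\}$. Hence $\|\pi_{l+}(u(\theta)v)\| = \max\{|z_I^\theta| : I \in S^+\}$, and the event in \eqref{eq: u 0} is exactly the event in \eqref{eq: n2}.

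Next I would check that the measure family $\{\mur : r \in \Xi\}$ satisfies the hypotheses \eqref{eq: n1} and \eqref{eq:n5} of Lemma \ref{lem: General Critical Expo}, uniformly in $r$. Recall $\mur = \prod_{ij} \mur_{ij}$, where $\mur_{ij}$ is the pushforward of $\mu_{ij}$ under $x \mapsto r_{ij} x$ with $r_{ij} \in [c_{ij}, c_{ij}^{-1}]$. For \eqref{eq: n1}: by Proposition \ref{prop: important IFS}, $\mu_{ij}([x-y,x+y]) \leq \lambda_{ij} y^{s_{ij}}$ for all $x, y$; pushing forward by the scaling $x \mapsto r_{ij}x$ gives $\mur_{ij}([x-y,x+y]) = \mu_{ij}([x/r_{ij} - y/r_{ij}, x/r_{ij} + y/r_{ij}]) \leq \lambda_{ij} (y/r_{ij})^{s_{ij}} \leq \lambda_{ij} c_{ij}^{-s_{ij}} y^{s_{ij}}$, so \eqref{eq: n1} holds with $\lambda = \min_{ij} s_{ij} > 0$ (after bounding $y^{s_{ij}} \leq \max\{y^\lambda, y^{\max_{ij} s_{ij}}\}$; one only needs the bound for $0 < y \leq \mathrm{diam}$, and for $y$ large it is trivially true after enlarging the constant) and $C_1 = \max_{ij} \lambda_{ij} c_{ij}^{-s_{ij}}$. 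For \eqref{eq:n5}: each $\mu_{ij}$ is supported on the compact limit set $\Kcal_{ij} \subset \R$, so $\mur_{ij}$ is supported in $r_{ij}\Kcal_{ij} \subset [-C_2, C_2]$ with $C_2 = \max_{ij} c_{ij}^{-1} \sup_{x \in \Kcal_{ij}} |x| < \infty$, uniformly in $r \in \Xi$.

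With these verifications in hand, Lemma \ref{lem: General Critical Expo} (applied with $F = \Xi$, $\nu^\rho = \mur$, $\nu^\rho_{ij} = \mur_{ij}$) yields constants $C_3 > 0$ and $\delta > 0$, independent of $v$ and of $r \in \Xi$, such that $\mur(\{\theta : \max_{I \in S^+}|z_I^\theta| \leq \e\}) \leq C_3 \e^\delta$ for all $0 < \e < 1$. Translating back through the identification above gives \eqref{eq: u 0} with $\gamma = \delta$ and $L_\gamma = C_3$ for $0 < \e < 1$; for $\e \geq 1$ the left side is at most $1 \leq \e^\gamma$, so after possibly enlarging $L_\gamma$ the bound holds for all $\e > 0$. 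I expect the proof to be essentially bookkeeping — the substantive work is already contained in Lemma \ref{lem: General Critical Expo}. The only mild subtlety, and the step most worth being careful about, is confirming that the constants produced by Lemma \ref{lem: General Critical Expo} are genuinely uniform over $r \in \Xi$ and over all $v$ with $\|v\| = 1$: this is exactly why the lemma was phrased with a parameter set $F$ and with hypotheses \eqref{eq: n1}, \eqref{eq:n5} that are required to hold uniformly, so the uniformity is automatic once the two displayed bounds are checked with $r$-independent $C_1, C_2, \lambda$, as above.
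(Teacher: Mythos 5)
Your proof is correct and follows essentially the same route as the paper's: define the auxiliary vectors $z_i \in \R^l$ from the coordinates of $v_1,\ldots,v_l$, observe that $\|\pi_{l+}(u(\theta)v)\| = \max_{I\in S^+}|z_I^\theta|$, verify the uniform-in-$r$ H\"older and support bounds for $\{\mur_{ij}\}$ from Proposition \ref{prop: important IFS}, and invoke Lemma \ref{lem: General Critical Expo} with $F=\Xi$. One small slip: you write $z_k = ((v_k)_1,\ldots,(v_k)_l)^{\top}$ for $1\le k\le d$, but since there are only $l$ vectors $v_1,\ldots,v_l$ the indices must be transposed; the intended definition (and the one used in the paper and implicitly in your subsequent identities) is $z_i=((v_1)_i,\ldots,(v_l)_i)^{\top}$ for $1\le i\le d$, i.e.\ $z_i$ collects the $i$-th coordinate of each $v_j$.
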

\begin{proof}
     Note that using Proposition \ref{prop: important IFS} and the fact that $\mu$ is a probability measure, we may assume that there exist constants $C_1, C_2, \lambda > 0$ such that for all $1 \leq i \leq m$, $1 \leq j \leq n$, $r \in \Xi$, $x \in \R$ and $y>0$, we have
\begin{align*}
    \supp(\mur_{ij}) &\subset [-C_1, C_1], \\
    \mur_{ij}([x-y, x+y]) &\leq C_2 y^\lambda.
\end{align*}

Now fix $1 \leq l \leq d$. For $\bfv = (v_1, \ldots, v_l) \in (\R^d)^l$, define $z_{1,\bfv}, \ldots, z_{d,\bfv}$ as vectors in $\R^l$ such that the $j$-th entry of $z_{i,\bfv}$ equals $(v_j)_i$ (the $i$-th entry of $v_j$). For $\theta \in \M_{m \times n}(\R)$, we let $\bfv^\theta = (u(\theta)v_1, \ldots, u(\theta)v_l)$. Let $v = v_1 \wedge \cdots \wedge v_l \in \wedge^l \R^d$. Then it is straightforward to see that
\begin{align}
    z_{i,\bfv^\theta} &= 
    \begin{cases}
        z_{i,\bfv} + \sum_{j=1}^n \theta_{ij} z_{m+j,\bfv}, & \text{if } i \leq m, \\
        z_{i,\bfv}, & \text{otherwise},
    \end{cases} \label{eq: m 1} \\
    z_{i_1, \bfv^\theta} \wedge \cdots \wedge z_{i_l, \bfv^\theta} &= (u(\theta)v)_{\{i_1, \ldots, i_l\}}, \label{eq: m 2}
\end{align}
for all $i_1 < \cdots < i_l$, where in the last expression we identify $\wedge^l \R^l$ with $\R$. Hence, using \eqref{eq: m 2}, we obtain
\begin{align*}
    &\{ \theta \in \Mat : \|\pi_{l+}(u(\theta)v)\| \leq \epsilon \} \\
    &= \{ \theta \in \Mat : \max\{|\wedge_{i \in I} z_{i,\bfv^\theta}| : \#(I \cap \{1, \ldots, m\}) = \min\{m, l\} \} \leq \epsilon \}.
\end{align*}
Using \eqref{eq: m 1}, it follows that the lemma can be deduced from Lemma \ref{lem: General Critical Expo} with $p = m$, $q = n$, $l = l$, and the measures $\mur$ replacing $\nu^\rho$.
\end{proof}

The following lemma will be needed in the proof of the next lemma and is a simple application of Fubini's Theorem.
\begin{lem}
    \label{Fubini Theorem}
     Let $\nu$ be a Borel measure and $f$ a non-negative Borel function on a separable metric space $X$. Then,
     $$
     \int_X f\;d\nu = \int_0^\infty \nu\left(\{x\in X: f(x)\geq R \}\right) \;dR
     $$
\end{lem}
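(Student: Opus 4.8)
The plan is to express $f$ pointwise as a ``layer-cake'' integral and then swap the order of integration by Tonelli's theorem. First I would observe that for every fixed $x \in X$,
\[
f(x) = \int_0^\infty \ind_{\{R \,:\, f(x) \geq R\}}(R)\; dR = \int_0^\infty \ind_{\{(y,R)\,:\, f(y)\geq R\}}(x,R)\; dR,
\]
since the inner indicator, as a function of $R$, is the indicator of the interval $[0, f(x)]$ (or $[0,\infty)$ if $f(x) = \infty$), whose Lebesgue measure is exactly $f(x)$. Integrating this identity against $\nu$ over $X$ gives the left-hand side as a double integral.

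The key step is then to apply Tonelli's theorem to the non-negative function $(x,R) \mapsto \ind_{\{(y,R)\,:\, f(y)\geq R\}}(x,R)$ on the product space $X \times (0,\infty)$, equipped with the product of $\nu$ and Lebesgue measure. For this I must check two things: (i) the set $A = \{(x,R) \in X \times (0,\infty) : f(x) \geq R\}$ is measurable in the product $\sigma$-algebra, and (ii) the hypotheses of Tonelli hold ($\sigma$-finiteness is not needed for non-negative integrands, but one should note both measures are Borel on separable metric spaces, so the product $\sigma$-algebra is the Borel $\sigma$-algebra of the product). For (i), the map $(x,R) \mapsto f(x) - R$ is Borel (as $f$ is Borel and subtraction is continuous on $[0,\infty] \times \R$, with the usual conventions), and $A$ is the preimage of $[0,\infty]$ under this map, hence Borel. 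Swapping the order of integration then yields
\[
\int_X f \; d\nu = \int_X \!\!\int_0^\infty \ind_A(x,R)\, dR\, d\nu(x) = \int_0^\infty \!\!\int_X \ind_A(x,R)\, d\nu(x)\, dR = \int_0^\infty \nu\big(\{x : f(x) \geq R\}\big)\, dR,
\]
which is the claimed formula.

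There is essentially no obstacle here: the only point requiring a word of care is the joint measurability of the set $A$, which is where the Borel (rather than merely $\nu$-measurable) hypothesis on $f$ and the separability of $X$ are used, ensuring the product Borel $\sigma$-algebra coincides with the Borel $\sigma$-algebra of $X \times (0,\infty)$ so that Tonelli applies cleanly.
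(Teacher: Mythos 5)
Your layer-cake-plus-Tonelli approach is exactly what the paper intends: the paper gives no proof and merely remarks that the lemma ``is a simple application of Fubini's Theorem.'' The decomposition into $f(x)=\int_0^\infty \ind_{\{f\geq R\}}\,dR$, the measurability check via $(x,R)\mapsto f(x)-R$, and the swap of integrals are all correct and are the expected argument.

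One technical point you got wrong in passing: the parenthetical claim that ``$\sigma$-finiteness is not needed for non-negative integrands'' is false. Tonelli's theorem does require both factor measures to be $\sigma$-finite even in the non-negative case; the standard counterexample is $\ind_{\{x=y\}}$ on $[0,1]^2$ with Lebesgue measure in one factor and counting measure (not $\sigma$-finite) in the other, where the two iterated integrals are $1$ and $0$. The lemma as stated, with $\nu$ an arbitrary Borel measure, is nonetheless true, but one must either (i) observe that if $\nu(\{f>\e\})=\infty$ for some $\e>0$ then both sides are $+\infty$, and otherwise $\{f>0\}=\bigcup_n\{f>1/n\}$ is $\sigma$-finite so Tonelli applies to $\nu\restr_{\{f>0\}}$, or (ii) prove the identity directly for simple functions and pass to general $f$ by monotone convergence. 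In the paper every application of this lemma uses a probability measure $\mur$, so the gap is harmless in context, but as written your justification of the Tonelli step does not hold at the stated level of generality.
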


\begin{lem}
    \label{lem: Finding the Critical Exponent}
    Fix $1 \leq l \leq d-1$. Assume that there exist $\gamma > 0$ and $L_{\gamma} > 0$ such that for all $v = v_1 \wedge \cdots \wedge v_l \in V_l$ with $\|v\| = 1$ and $r \in \Xi$, the following holds:
    $$
    \mur\left(\{ \theta \in \Mat : \|\pi_{l+}(u(\theta)v)\| \leq \epsilon \} \right) \leq L_{\gamma} \epsilon^{\gamma}.
    $$
    Then $\zeta_l(\mu) \geq \gamma$.
\end{lem}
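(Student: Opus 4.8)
The plan is to derive the desired integral bound directly from the hypothesised sublevel-set estimate via the layer-cake formula. Fix $1 \le l \le d-1$ and let $\gamma, L_\gamma$ be as in the hypothesis. Pick any $\gamma'$ with $0 < \gamma' < \gamma$; it suffices to produce a constant $C'_{\gamma',l}$, independent of $v$ and $r$, such that $\int_{\Mat} \|\pi_{l+}(u(\theta)v)\|^{-\gamma'}\, d\mur(\theta) < C'_{\gamma',l}$ for every decomposable $v = v_1 \wedge \ldots \wedge v_l \in V_l$ with $\|v\| = 1$ and every $r \in \Xi$. Indeed, this shows $\zeta_l(\mu) \ge \gamma'$ for every such $\gamma'$, and letting $\gamma' \uparrow \gamma$ gives $\zeta_l(\mu) \ge \gamma$.

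The first step is to apply Lemma \ref{Fubini Theorem} with $X = \Mat$, $\nu = \mur$ (which is a probability measure, being a product of pushforwards of the probability measures $\mu_{ij}$), and $f(\theta) = \|\pi_{l+}(u(\theta)v)\|^{-\gamma'}$, a non-negative Borel function on $\Mat$. This yields
\[
\int_{\Mat} f \, d\mur = \int_0^\infty \mur\bigl(\{\theta \in \Mat : \|\pi_{l+}(u(\theta)v)\| \le R^{-1/\gamma'}\}\bigr)\, dR .
\]
I would then split the outer integral at $R = 1$. On $[0,1]$ the integrand is at most $\mur(\Mat) = 1$, contributing at most $1$. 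On $[1,\infty)$ we have $R^{-1/\gamma'} \le 1$, so the hypothesis applies with $\epsilon = R^{-1/\gamma'}$ and bounds the integrand by $L_\gamma R^{-\gamma/\gamma'}$; since $\gamma/\gamma' > 1$, we get $\int_1^\infty L_\gamma R^{-\gamma/\gamma'}\, dR = L_\gamma \gamma'/(\gamma - \gamma') < \infty$. Summing the two contributions gives the uniform bound $C'_{\gamma',l} = 1 + L_\gamma \gamma'/(\gamma - \gamma')$, which depends only on $\gamma, \gamma', L_\gamma$ and not on $v$ or $r$, as required.

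There is no serious obstacle here. The only points requiring a little care are that $\mur$ is genuinely a probability measure, so that the range $R \in [0,1]$ is harmless and one need not worry about the behaviour of $f$ where $\|\pi_{l+}(u(\theta)v)\|$ is large; and that the exponent comparison $\gamma/\gamma' > 1$ is exactly what forces the tail integral to converge, which explains why one obtains $\zeta_l(\mu) \ge \gamma$ by taking a supremum over $\gamma' < \gamma$ rather than an estimate at $\gamma' = \gamma$ itself.
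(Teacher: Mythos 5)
Your proof is correct and is essentially the same as the paper's: both apply the layer-cake formula (Lemma \ref{Fubini Theorem}) to $\|\pi_{l+}(u(\theta)v)\|^{-\gamma'}$ for $\gamma'<\gamma$, split the $R$-integral at $1$, bound the tail by $L_\gamma R^{-\gamma/\gamma'}$ using the hypothesis, and then let $\gamma'\uparrow\gamma$. The paper parametrises $\gamma'=\rho\gamma$ with $0<\rho<1$, which is just a change of variable in the same argument.
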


\begin{proof}
    Note that for all $0 < \rho < 1$, $r \in \Xi$, and $v = v_1 \wedge \cdots \wedge v_l \in V_l$, the following holds:
     \begin{align*}
        &\int_{\Mat} \frac{1}{\|\pi_{l+}(u(\theta)v)\|^{\gamma \rho}} \, d\mur(\theta) \\
        &= \int_{0}^\infty \mur\left( \theta: \frac{1}{\|\pi_{l+}(u(\theta)v)\|^{\gamma \rho}} \geq R  \right) \, dR  (\text{ using Lemma \ref{Fubini Theorem}})\\
        &= \int_{0}^1 \mur\left( \theta: \frac{1}{\|\pi_{l+}(u(\theta)v)\|^{\gamma \rho}} \geq R  \right) \, dR + \int_{1}^\infty \mur\left(  \theta : \frac{1}{\|\pi_{l+}(u(\theta)v)\|^{\gamma \rho}} \geq R  \right) \, dR  \\
        &\leq \int_0^1 1 dR + \int_{1}^\infty \mur\left(  \theta: \|\pi_{l+}(u(\theta)v)\| \leq R^{-1/(\gamma \rho)}  \right) \, dR  \\
        &\leq 1 + \int_{1}^\infty L_{\gamma} R^{-1/ \rho} \, dR  < \infty \quad \text{using fact that $\rho<1$}.
    \end{align*}
    By the definition of $\zeta_l(\mu)$, we conclude that $\zeta_l(\mu) \geq \rho \gamma$. Since this holds for all $0 < \rho < 1$, the lemma follows.
\end{proof}

\begin{proof}[Proof of Proposition \ref{Critical Exponent is positive}]
    The proposition follows directly from Lemmas \ref{lem: Integral estimate} and \ref{lem: Finding the Critical Exponent}.
\end{proof}

\section{Critical Exponent: Special Cases}
\label{sec: Critical Exponent: Special Cases}
In this subsection, we will provide non-trivial lower bounds for the critical exponents of $\mu$ in the following cases.

\subsection{The Matrix Case}
In this case, we assume that $\Kcal = \M_{m \times n}([0, 1])$ and work with the dyadic model. 
That is, for all $1 \leq i \leq m$ and $1 \leq j \leq n$, we assume that $\Phi_{ij}$ contains the following two contracting similarities: $x \mapsto x/2$ and $x \mapsto (x+1)/2$. The main goal of this subsection is to prove the following lemma.

\begin{lem}
    \label{lem: Critical Exponent Real Case}
   If $\Kcal= \M_{m \times n}([0,1])$, then 
   $$
   \zeta_l(\mu) \geq  \begin{cases}
       \frac{m}{l} \quad \text{if } l \leq m, \\
       \frac{n}{m+n-l} \quad \text{if } m < l \leq d-1.
   \end{cases}
   $$
\end{lem}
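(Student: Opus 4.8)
The goal is to lower-bound the critical exponents $\zeta_l(\mu)$ when $\Kcal = M_{m\times n}([0,1])$, so by Lemma \ref{lem: Finding the Critical Exponent} it suffices to establish the measure estimate \eqref{eq: u 0} with the claimed exponent $\gamma_l$, where $\gamma_l = m/l$ for $l \le m$ and $\gamma_l = n/(m+n-l)$ for $m < l \le d-1$. In this setting each $\mur_{ij}$ is (a rescaling of) Lebesgue measure on a bounded interval, so $\mur$ is comparable to Lebesgue measure on a box in $\Mat$, and the problem reduces to a clean affine-geometry computation: for a fixed unit-norm pure wedge $v = v_1\wedge\cdots\wedge v_l$, estimate the Lebesgue measure of $\{\theta : \|\pi_{l+}(u(\theta)v)\| \le \e\}$. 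The plan is to run the same inductive scheme as Lemma \ref{lem: General Critical Expo} but to track the \emph{exact} exponent rather than merely its positivity, exploiting the product structure and the self-similar scaling $\theta \mapsto 2\theta$ which multiplies $\mur$ by $2^{mn}$ and rescales $\e$ in a controlled way.

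\textbf{Key steps.} First I would reduce, as in Lemma \ref{lem: Integral estimate}, to the combinatorial statement about vectors $z_{1,\bfv},\ldots,z_{d,\bfv} \in \R^l$ and the quantity $\max\{|z_I^\theta| : I \in S^+\}$, where $S^+$ consists of index sets $I$ with $\#(I \cap \{1,\ldots,m\}) = \min\{m,l\}$. Second, I would split into the two regimes. For $l \le m$: every $I \in S^+$ contains exactly $l$ rows among $\{1,\ldots,m\}$ and no columns, so $z_I^\theta = z_{i_1}^\theta \wedge \cdots \wedge z_{i_l}^\theta$ with $z_i^\theta = z_i + \sum_j \theta_{ij} z_{m+j}$; the condition that \emph{all} these $\binom{m}{l}$ wedges are small forces, by a linear-algebra argument, that the affine span of the perturbed rows is close to a fixed $(l-1)$-dimensional subspace, which in the worst case is an $\e^{m/l}$-measure event because one is asking $l$ of the $m$ perturbation rows to lie near a common low-dimensional affine set — here the exponent $m/l$ appears exactly as ``(number of constrained rows) / (codimension-type quantity)''. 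For $m < l \le d-1$: now $\min\{m,l\} = m$, so every $I \in S^+$ contains all of $\{1,\ldots,m\}$ together with $l-m$ columns; the wedge $z_I^\theta = z_1^\theta \wedge\cdots\wedge z_m^\theta \wedge z_{m+j_1}\wedge\cdots\wedge z_{m+j_{l-m}}$, and smallness of all $\binom{n}{l-m}$ of these forces a complementary $n/(m+n-l)$-type constraint on the column vectors as seen through the perturbation, by the analogous counting. Third, I would verify that in both regimes the inductive step of Lemma \ref{lem: General Critical Expo} — peeling off one vector and rescaling — preserves the sharp exponent $m/l$ (resp. $n/(m+n-l)$) rather than degrading it, using that for Lebesgue measure $\lambda = 1$ and the splitting into $R_0$ and the $R_I$'s can be balanced: one chooses the auxiliary threshold $\e^{\kappa}$ so that the induction-hypothesis term and the Fubini term $\e^{\lambda(1-\kappa)}$ match the target exponent. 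Finally, invoke Lemma \ref{lem: Finding the Critical Exponent} to conclude $\zeta_l(\mu) \ge \gamma_l$.

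\textbf{Main obstacle.} The delicate point is obtaining the \emph{sharp} constant rather than just some positive exponent: the crude induction of Lemma \ref{lem: General Critical Expo} loses a factor at each step (it takes $\min\{\lambda/2, \delta_1\}$ and squares things), so I cannot simply cite it. I expect the real work to be a direct, non-inductive estimate in the model case: writing $v$ in terms of its Plücker-type coordinates and analyzing the variety cut out by $\{z_I^\theta = 0 : I \in S^+\}$ inside the affine space $\Mat \cong \R^{mn}$, showing this variety has the expected codimension and that the measure of its $\e$-neighborhood scales like $\e^{\gamma_l}$ uniformly in $v$. The uniformity over all unit $v$ — including degenerate configurations where many coordinates of $v$ vanish — is where one must be careful; I would handle it by a compactness/normalization argument, reducing to finitely many ``shapes'' of the support pattern of $v$ and checking the exponent $\gamma_l$ is attained (and not beaten downward) in each, with the extremal shape being the one that makes the constraint set as large as possible. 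The rescaling symmetry $\theta \mapsto 2\theta$ of the Cantor-type (here dyadic) construction is what guarantees the estimate is genuinely uniform across scales and lets the local computation globalize.
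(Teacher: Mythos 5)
Your plan takes a genuinely different route from the paper, and as written it has a real gap at the crucial step. The paper's proof of this lemma does not attempt a direct geometric estimate or rework the induction of Lemma~\ref{lem: General Critical Expo} at all; instead it imports the sharp estimate wholesale from \cite{KKLM}. Specifically, it uses KKLM Proposition~3.1 (the spherical average $\int_{\SO(d)} \|h_t k v\|^{-\beta_l}\,dk \le c\, t\, e^{-mnt}\|v\|^{-\beta_l}$ for the \emph{unweighted} flow $h_t$), transfers from $\SO(d)$ to a Euclidean box via KKLM Lemma~3.5, chooses $t$ so that $e^{t(m+n)}\e$ is a fixed constant, and applies Markov's inequality. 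The projection $\pi_{l+}$ never needs to be estimated directly: the bound $\|h_t u(\theta)v\| \le e^{mnt/\beta_l}\e$ on the bad set is what connects $\pi_{l+}$ to the KKLM quantity. The factor $t$ in Proposition~3.1 accounts for the logarithmic loss in the worst $v$, which is exactly why one only gets $\zeta_l(\mu)\ge\rho$ for every $\rho<\beta_l$ and why the final estimate is a supremum.

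The gap in your plan is in the ``main obstacle'' paragraph. You correctly identify that the crude induction degrades exponents and cannot be cited, but the replacement you propose --- showing that the variety cut out by $\{z_I^\theta=0 : I\in S^+\}$ in $\Mat\cong\R^{mn}$ has codimension $\gamma_l$ and that the measure of its $\e$-neighborhood scales like $\e^{\gamma_l}$ --- cannot be made literally true, because $\gamma_l = m/l$ (resp.\ $n/(m+n-l)$) is in general not an integer and so cannot be the codimension of any variety. The exponent $\gamma_l$ is not the decay rate of a single $\e$-neighborhood; it is a \emph{worst case over $v$} of decay rates that are individually different (and generically much better). Pinning down that worst case uniformly in $v$, including all degenerate configurations, is precisely the technical content of KKLM Proposition~3.1, which achieves it by representation-theoretic averaging over $\SO(d)$ rather than by a codimension count in $\theta$-space. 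A compactness argument over unit $v$, as you suggest, would yield uniform \emph{constants} for any fixed exponent $\rho$, but does not by itself produce the sharp \emph{exponent}; you would still need to verify, configuration by degenerate configuration, that no shape of $v$ yields decay slower than $\e^{\gamma_l}$. So the proposal is not a proof; the honest fix is to either cite KKLM as the paper does, or supply a self-contained argument that reproves the spherical averaging estimate (or an equivalent).
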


To prove this lemma, we will use some results from \cite{KKLM}. First, we define the following for $t\in \R$:

\begin{align}
    \label{eq: def h t}
    h_t = \begin{pmatrix}
        e^{nt}I_m & 0 \\ 
        0 & e^{-mt}I_n
    \end{pmatrix}.
\end{align}

Now, we state the required results from \cite{KKLM}:

\begin{prop}[{\cite[Prop.~3.1]{KKLM}}]
    \label{KKLM Prop 3.1}
    For any $l \in \{1, \ldots, d-1\}$, define $\beta_l$ as follows:
    $$
    \beta_l = 
    \begin{cases} 
        \frac{m}{l} & \text{if } l \leq m, \\
        \frac{n}{m+n-l} & \text{if } l > m.
    \end{cases}
    $$
    Then, there exists a constant $c > 0$ (depending only on $m$ and $n$) such that for any $t \geq 1$ and any $v = v_1 \wedge \cdots \wedge v_l \in \wedge^l \R^d$, we have
    $$
    \int_{\SO(d)} \|h_t k v\|^{-\beta_l} \, dk \leq c t e^{-mnt} \|v\|^{-\beta_l}.
    $$
\end{prop}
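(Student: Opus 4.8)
By homogeneity in $v$ — both sides of the displayed inequality scale like $\|v\|^{-\beta_l}$ (I read the left-hand norm as appearing to the power $-\beta_l$) — we may assume $\|v\|=1$, so the goal becomes $\int_{\SO(d)}\|h_tkv\|^{-\beta_l}\,dk \le cte^{-mnt}$. First I would record the eigenstructure of $h_t$ on $V_l=\wedge^l\R^d$: it is diagonal in the basis $\{e_I\}$, acting on $e_I$ by $e^{\nu(I)t}$ with $\nu(I)=(m+n)\#(I\cap\{1,\dots,m\})-ml$, so its eigenvalue exponents form an arithmetic progression $\mu_l>\mu_l-d>\mu_l-2d>\cdots$ with common gap $d=m+n$, top exponent $\mu_l=nl$ for $l\le m$ and $\mu_l=m(m+n-l)$ for $l>m$, and — the key arithmetic input — $\mu_l\beta_l=mn$ in both cases. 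The top eigenspace is exactly the space $V_l^+$ from Section~\ref{subsec: Rep Theory}, so writing $w=kv=\sum_{j\ge1}w^{(j)}$ for the decomposition into $h_t$-eigenspaces (so $w^{(1)}=\pi_{l+}(w)$, eigenvalue $e^{\mu_l t}$), we have $\|h_tw\|\asymp\max_j e^{(\mu_l-(j-1)d)t}\|w^{(j)}\|$.

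The engine of the argument is a family of uniform sub-level estimates on $\SO(d)$ with sharp exponents: there is $C>0$ such that for every decomposable $v\in V_l$ with $\|v\|=1$ and every $\epsilon\in(0,1)$, the Haar measure of $\{k\in\SO(d):\|\pi_{l+}(kv)\|\le\epsilon\}$ is at most $C\epsilon^{\beta_l}$, together with the analogous bounds for the projection onto the top $h_t$-eigenspace inside each step of the flag $V_l\supset V_l^-\supset\cdots$ of $h_t$-invariant subspaces. I would prove these using the splitting $\wedge^l\R^d=\bigoplus_{p+q=l}\wedge^p\R^m\otimes\wedge^q\R^n$, pushing Haar measure on $\SO(d)$ down to the Grassmannian $\SO(d)/S(O(m)\times O(n))=\Gr(m,d)$, and computing the push-forward explicitly in a graph chart; for $l\le m$ this reduces to the statement that a Haar-random $l$-frame projects to $\R^m$ with covolume $\le\epsilon$ with probability $\asymp\epsilon^{\beta_l}$, a Beta-integral computation generalizing the elementary $l=1$ case, and the range $l>m$ then follows by Hodge duality exchanging $m$ and $n$.

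To conclude, I would partition $\SO(d)$ into dyadic layers according to which term $e^{(\mu_l-(j-1)d)t}\|w^{(j)}\|$ is dominant and at what scale, so that on each layer $\|h_tkv\|$ is comparable to that one term; integrating $\|h_tkv\|^{-\beta_l}$ over each layer against the sub-level bounds above and summing the resulting (essentially geometric) series, the identities $\mu_l\beta_l=mn$ and ``gap $=d$'' make every layer contribute $O(e^{-(mn+\delta)t})$ for some $\delta>0$ except one borderline layer — the first resonance, where $e^{\mu_l t}\|\pi_{l+}(kv)\|\sim e^{(\mu_l-d)t}\|w^{(2)}\|$, i.e.\ $\|\pi_{l+}(kv)\|\sim e^{-dt}$ — which contributes $\asymp e^{-mnt}\int_{e^{-dt}}^{O(1)}\rho^{-1}\,d\rho\asymp te^{-mnt}$; summing the layers gives $cte^{-mnt}$. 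The main obstacle is twofold: proving the sub-level estimates with the exact exponents $\beta_l$ (and their relative versions along the flag), and the combinatorial bookkeeping of the layered sum — in particular seeing that precisely one resonant layer survives, so the bound carries a single factor $t$ rather than $t^2$ or worse. This is exactly Proposition~3.1 of \cite{KKLM}, which the paper imports, and the above only indicates the shape of its proof.
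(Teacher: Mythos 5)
The paper does not prove this proposition at all: it is imported verbatim from \cite{KKLM}, as the citation in its header indicates, and you correctly flag this in your closing sentence, so there is no ``paper's own proof'' to compare against. A few remarks on the sketch itself. You are right that the statement as printed has a sign slip — the exponent on $\|h_t k v\|$ should be $-\beta_l$, as otherwise the two sides scale oppositely in $\|v\|$ — and your reading is the only consistent one. Your structural outline also matches the actual mechanism: the diagonalization of $h_t$ on $\wedge^l\R^d$ with weight $\nu(I) = (m+n)\#(I\cap\{1,\dots,m\}) - ml$, the resulting arithmetic progression of weights with gap $d$, the identification of the top weight space with $V_l^+$ and top weight $\mu_l$, the identity $\mu_l\beta_l = mn$, the Hodge-dual reduction of $l>m$ to $d-l<n$ with $m\leftrightarrow n$ swapped, and the layer-cake decomposition in which exactly the top layer produces a logarithmic integral $\int_{e^{-dt}}^{1}\rho^{-1}\,d\rho \asymp t$ while the lower layers contribute $O(e^{-mnt})$ without the extra $t$ — these are the right ingredients, and they correctly explain why the answer is $te^{-mnt}$ and not $e^{-mnt}$ or $t^2e^{-mnt}$.

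Where the sketch is thinnest is precisely where the real content sits: the uniform sub-level bound $\big|\{k\in\SO(d) : \|\pi_{l+}(kv)\|\le\epsilon\}\big|\le C\epsilon^{\beta_l}$ (and its relative versions along the $h_t$-invariant flag), uniform over all decomposable $v$ with $\|v\|=1$, with the \emph{sharp} exponent $\beta_l$. You propose to extract this from an explicit Beta-integral on the Grassmannian in a graph chart. For $l=1$ this is elementary, and the $l\le m$ case does reduce to controlling a product of cosines of principal angles against the Jacobi-type density, but establishing the critical exponent $m/l$ — and in particular the uniformity as $v$ degenerates to the boundary of the orbit, where the naive chart computation breaks down — is the genuine lemma of \cite{KKLM} and cannot be dispatched by ``a Beta-integral computation'' in one clause. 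So: correct correction of the statement, correct skeleton of the argument and of where the single $t$ factor comes from, but the hardest lemma is left as a pointer rather than proved, which you yourself acknowledge.
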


\begin{lem}[{\cite[Lem.~3.5]{KKLM}}]
    \label{KKLM Lem 3.5}
    There exists a neighborhood $V$ of the identity in $\Mat$ such that for any $s_0 \in \Mat$, $t, \beta > 0$, $l \in \{1, \ldots, d-1\}$, and a vector $v = v_1 \wedge \cdots \wedge v_l \in \wedge^l \R^d$, we have 
    $$
    \int_{V + s_0} \|h_t u(\theta) v\|^{-\beta} \, d\theta \ll (1 + \|s_0\|)^\beta \int_K \|h_t k v\|^{-\beta},
    $$
    where the implied constant depends only on $m$, $n$, and $\beta$.
\end{lem}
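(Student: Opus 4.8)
The plan is to pass from the integral over the unipotent slice $V+s_0$ to an integral over $K=\SO(d)$ via the Iwasawa-type decomposition $G=P^-K$, where $P^-=LU^-$ is the block lower-triangular parabolic, $L=S(\GL_m\times\GL_n)$ its Levi, and $U^-=\{u^-(\psi)\}$ the block lower-triangular unipotents; the two structural facts we exploit are that $h_t$ lies in the centre of $L$ and that $\mathrm{Ad}(h_t)$ contracts $U^-$ for $t>0$. Fix a small bounded neighbourhood $V$ of $0$ in $\Mat$ and write the locally smooth decomposition $u(\theta)=q(\theta)k(\theta)$ with $q(\theta)\in P^-$, $k(\theta)\in K$, $q(0)=k(0)=I$; for $V$ small enough $q(\theta)$ stays in a fixed compact subset of $P^-$. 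Decomposing $q(\theta)=\ell(\theta)u^-(\psi(\theta))$ and using the two facts above, $h_tq(\theta)h_t^{-1}=\ell(\theta)u^-(e^{-dt}\psi(\theta))$, so $h_tq(\theta)h_t^{-1}$ and its inverse stay in a fixed compact set, uniformly over $t>0$ and $\theta\in V$. Hence, acting on $\bigwedge^l\R^d$, $\|h_tu(\theta)w\|=\|(h_tq(\theta)h_t^{-1})\,h_tk(\theta)w\|\asymp\|h_tk(\theta)w\|$ for all $w$, with constants depending only on $m,n,V$. Replacing $\|\cdot\|$ by the equivalent Euclidean norm $\|\cdot\|_2$ on $\bigwedge^l\R^d$, the case $s_0=0$ reduces to showing $\int_V\|h_tk(\theta)w\|_2^{-\beta}\,d\theta\ll\int_K\|h_tkw\|_2^{-\beta}\,dk$.

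\textbf{Step 2 (the change of variables).} The key observation is that $h_t$ commutes with $M:=\SO(m)\times\SO(n)$ embedded block-diagonally in $K$, so the nonnegative function $F_w(k):=\|h_tkw\|_2^{-\beta}$ is left $M$-invariant and descends to a function $\overline{F_w}$ on $M\backslash K$; moreover $\dim(M\backslash K)=\binom d2-\binom m2-\binom n2=mn=\dim\Mat$. The map $\Psi\colon V\to M\backslash K$, $\Psi(\theta)=Mk(\theta)$, has bijective differential at $0$: from $u(\theta)=q(\theta)k(\theta)$ one reads off that, under the canonical identification $T_{[I]}(M\backslash K)=\mathfrak k/\mathfrak m\cong\mathfrak g/\mathfrak p^-$, the differential $d_0\Psi$ is the projection $\mathfrak u^+\to\mathfrak g/\mathfrak p^-$, an isomorphism since $\mathfrak g=\mathfrak p^-\oplus\mathfrak u^+$. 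Shrinking $V$, $\Psi$ is a diffeomorphism onto an open subset of $M\backslash K$ with Jacobian bounded above and below, so by positivity of $F_w$, $\int_V F_w(k(\theta))\,d\theta=\int_{\Psi(V)}\overline{F_w}\ll\int_{M\backslash K}\overline{F_w}=\int_K F_w(k)\,dk$. Combined with Step 1 this proves the lemma when $s_0=0$.

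\textbf{Step 3 (general $s_0$, and the main obstacle).} Since $u$ is a homomorphism, $u(\theta)=u(\theta-s_0)u(s_0)$; substituting $\theta=\theta_0+s_0$ and applying Step 1 with $w=u(s_0)v$ gives $\int_{V+s_0}\|h_tu(\theta)v\|^{-\beta}\,d\theta\ll\int_K\|h_tk\,u(s_0)v\|^{-\beta}\,dk$. Writing the Cartan decomposition $u(s_0)=\kappa_1 a_{s_0}\kappa_2$ with $\kappa_i\in K$ and $a_{s_0}$ diagonal, whose entries (the singular values of $u(s_0)$) all lie in $[c(1+\|s_0\|)^{-1},c(1+\|s_0\|)]$, and using bi-$K$-invariance of the Haar integral (which also lets us drop $\kappa_2$ from the right-hand side), the claim reduces to $\int_K\|h_tk\,a_{s_0}w\|^{-\beta}\,dk\ll(1+\|s_0\|)^\beta\int_K\|h_tkw\|^{-\beta}\,dk$. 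Here is where the work lies: since $a_{s_0}$ commutes with $h_t$ and both preserve the coordinate flag $\R^m\subset\R^d$, inserting $a_{s_0}$ distorts $\|h_tk(\cdot)\|$ only through the singular values of $a_{s_0}$ on the relevant $h_t$-isotypic pieces of $\bigwedge^l\R^d$, and tracking this — exploiting that $u(s_0)$ shears only within that flag, so that on each isotypic piece the distortion is a single power of $1+\|s_0\|$ — yields exactly the exponent $\beta$. I expect this bookkeeping to be the main obstacle: a crude operator-norm estimate gives a worse power of $1+\|s_0\|$, and obtaining the sharp $\beta$ requires the precise weight structure of $h_t$ together with the flag-preserving property of $u(s_0)$; the remaining ingredients (the two decompositions, the compactness estimates, and the Jacobian bound) are routine.
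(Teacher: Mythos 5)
Your Steps 1 and 2 are fine and follow the standard route (this paper gives no proof at all of this lemma --- it is quoted from \cite{KKLM} --- so the only comparison is with the cited argument): the decomposition $G=P^-K$, the fact that conjugation by $h_t$ with $t>0$ fixes the Levi and contracts the lower block-triangular unipotent, and the Jacobian bound for $\theta\mapsto Mk(\theta)$ do give $\int_V\|h_tu(\theta)w\|^{-\beta}\,d\theta\ll\int_K\|h_tkw\|^{-\beta}\,dk$ for every $w$, hence the lemma whenever $s_0$ ranges over a fixed compact set; and that is the only way the lemma is used in this paper (in Lemma \ref{lem: Critical Exponent Real Case} the translates $s_i$ lie in $M_{m\times n}([0,2])$).

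The genuine gap is Step 3: you correctly reduce, via $u(s_0)=\kappa_1a_{s_0}\kappa_2$ and bi-invariance of $dk$, to the estimate $\int_K\|h_tka_{s_0}w\|^{-\beta}\,dk\ll(1+\|s_0\|)^{\beta}\int_K\|h_tkw\|^{-\beta}\,dk$, but you only announce that ``bookkeeping'' will produce the exponent $\beta$, and no bookkeeping can: with the exponent $\beta$ the statement (as quoted here) is false as soon as $\min\{l,m,n\}\geq 2$. Take $m=n=2$, $l=2$, $s_0=S\,I_2$ and $v=(e_3-Se_1)\wedge(e_4-Se_2)=\wedge^2u(s_0)^{-1}(e_3\wedge e_4)$. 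For $\theta\in V+s_0$ one has $u(\theta)v=u(\theta-s_0)(e_3\wedge e_4)$, whose coordinates are $1$ on $e_3\wedge e_4$, entries of $\theta-s_0$ on the mixed monomials, and $\det(\theta-s_0)$ on $e_1\wedge e_2$; since $h_t$ scales these by $e^{-4t},1,e^{4t}$, if $V$ has entries bounded by $\delta$ then $\|h_tu(\theta)v\|\leq\max\{e^{-4t},\delta,2\delta^2e^{4t}\}$, so the left-hand side is at least a positive constant depending only on $t,\beta,V$ and not on $S$. On the other hand $\|h_tkv\|\geq cS^2e^{-4t}-3CSe^{4t}\geq\tfrac c2S^2e^{-4t}$ once $S\gg e^{8t}$, so the right-hand side is $\ll(1+S)^{\beta}S^{-2\beta}e^{4\beta t}\to0$ as $S\to\infty$ with $t$ fixed. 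Thus the correct factor must carry an $l$-dependent power, e.g.\ $\|\wedge^lu(s_0)^{-1}\|^{\beta}$ or $(1+\|s_0\|)^{\min\{l,m,n\}\beta}$ (so the form quoted in the paper is itself an overstatement of the KKLM lemma, harmless for its applications because there $s_0$ stays in a fixed compact set); such a weaker bound follows from your Steps 1--2 together with crude norm estimates, but your plan to extract the exponent exactly $\beta$ cannot be completed.
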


Using the above results, we can now prove the lemma \ref{lem: Critical Exponent Real Case}.

\begin{proof}[Proof of Lemma \ref{lem: Critical Exponent Real Case}]
For $1 \leq l \leq d-1$, define
$$
    \beta_l = 
    \begin{cases} 
        \frac{m}{l} & \text{if } l \leq m, \\
        \frac{n}{m+n-l} & \text{if } l > m.
    \end{cases}
$$
Now fix $1 \leq l \leq d-1$ and  $v = v_1 \wedge \cdots \wedge v_l \in \wedge^l \R^d$ with $\|v\| = 1$. Also, fix $0 < \e < 1$ and let $t > 1$ be such that $e^{t(m+n)} \e = K$, where $K = \|\{u(\theta): \theta \in \M_{m \times n}([0, 2]) \}\|$. Let us define the set $E(v, \e) = \{\theta \in \Mat : \|\pi_{l+}(u(\theta)v)\| < \e\}$.

Now, let $V$ be a neighbourhood of the identity in $\Mat$ such that Lemma \ref{KKLM Lem 3.5} holds. Let $s_0, \ldots, s_p \in \Mat([0, 2])$ be a finite set such that $\Mat([0, 2]) \subset \cup_{i=1}^p (V + s_i)$. From the definition of $\mur$, it is easy to see that $\supp(\mur) \subset \M_{m \times n}([0, 2])$. Thus, we have
\begin{align}
    \int_{\Mat} \|h_t u(\theta)v\|^{-\beta_l} \, d\mur(\theta) &\ll \sum_{i=0}^p \int_{V + s_i} \|h_t u(\theta)v\|^{-\beta_l} \, d\theta  \nonumber \\
    &\ll \sum_{i=0}^p (1 + \|s_i\|)^{\beta_l} \int_K \|h_t k v\|^{-\beta_l} \, dk \quad \text{(using Lemma \ref{KKLM Lem 3.5})} \nonumber \\
    &\ll \int_K \|h_t k v\|^{-\beta_l} \, dk \nonumber \\
    &\ll t e^{-mnt} \|v\|^{-\beta_l} = t e^{-mnt}, \label{eq: o 1}
\end{align}
where the implied constant depends only on $m, n$.

Next, for all $\theta \in E(v, \e) \cap \M_{m \times n}([0, 2])$, we have the following estimates:
\begin{align*}
    \|h_t u(\theta) v\| &= \max\left\{ |(h_t u(\theta) v)_I| : \#I = l \right\} \\
    &= \max\left\{ e^{(\#(I \cap \{1, \ldots, m\}) nt) - (\#(I \cap \{m+1, \ldots, d\}) mt)} |(u(\theta) v)_I| : \#I = l \right\} \\
    &\leq \max \Bigg\{
        e^{\frac{mnt}{\beta_l}} \|\pi_{l+}(u(\theta) v)\|, \;
        e^{\frac{mnt}{\beta_l} - (m+n)t} |(u(\theta) v)_I| 
        : \#(I \cap \{1, \ldots, m\}) < \min\{l, m\}
      \Bigg\} \\
    &\leq \max \Bigg\{
        e^{\frac{mnt}{\beta_l}} \e, \;
        e^{\frac{mnt}{\beta_l}} \frac{\e}{K}
        \|u(\theta)\| \|v\|
      \Bigg\} \\
    &\leq e^{\frac{mnt}{\beta_l}} \e.
\end{align*}

Thus, we conclude that
$$
E(v, \e) \cap \M_{m \times n}([0, 2]) \subset \{\theta \in \Mat : \|h_t u(\theta) v\| < e^{\frac{mnt}{\beta_l}} \e\} \quad \text{for all } r \in \Xi.
$$
Using \eqref{eq: o 1} and applying Markov's inequality, we obtain the following bound for all $\rho < \beta_l$:
\begin{align*}
    \mur(E(v, \e)) &\leq e^{mnt} \e^{\beta_l} \int_{\Mat} \|h_t u(\theta) v\|^{-\beta_l} \, d\mur(\theta) \\
    &\ll e^{mnt} \e^{\beta_l} t e^{-mnt} = t \e^{\beta_l}  \\
    &=  \frac{(\log K - \log \e)}{m+n} \e^{\beta_l}  \\
    &\ll \e^{\rho},
\end{align*}
where the implied constant depends only on $m, n, \rho$. Thus, by Lemma \ref{lem: Finding the Critical Exponent}, we conclude that $\zeta_l(\mu) \geq \rho$. Since this holds for all $\rho < \beta_l$, the lemma follows.
\end{proof}

\subsection{Case $n=1$}
\begin{lem}
\label{lem: n 1 Critical Expo}
    For $n=1$, we have $\zeta_l(\mu)\geq \min\{\sum_{i \in I}s_{i1}: \#I= d-l\}$ for all $1 \leq l \leq d-1$.
\end{lem}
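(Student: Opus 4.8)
The statement to prove is Lemma \ref{lem: n 1 Critical Expo}: when $n=1$, we have $\zeta_l(\mu) \geq \min\{\sum_{i \in I} s_{i1} : \#I = d-l\}$ for all $1 \leq l \leq d-1$. Here $d = m+1$. The plan is to reduce to an explicit estimate of the form in Lemma \ref{lem: Finding the Critical Exponent}, namely a bound
$$
\mur\bigl(\{\theta \in M_{m\times 1}(\R) : \|\pi_{l+}(u(\theta)v)\| \leq \e\}\bigr) \leq L_\gamma \e^\gamma
$$
for $\gamma$ arbitrarily close to $\min\{\sum_{i\in I} s_{i1} : \#I = d-l\}$, uniformly over unit $v = v_1 \wedge \cdots \wedge v_l \in V_l$ and $r \in \Xi$. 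Once such a bound is established, Lemma \ref{lem: Finding the Critical Exponent} immediately gives $\zeta_l(\mu) \geq \gamma$, and letting $\gamma$ approach the minimum finishes the proof.

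**The key computation.** When $n=1$, the matrix $\theta \in M_{m\times 1}(\R)$ is just a column vector $(\theta_1, \ldots, \theta_m)$, and $u(\theta)$ fixes $e_{m+1}$ while sending $e_i \mapsto e_i + \theta_i e_{m+1}$ for $i \leq m$. In the wedge representation, for an index set $I$ with $\#I = l$: if $m+1 \in I$ then $e_I$ is fixed by $u(\theta)$ up to... actually the cleaner route is to work with the vectors $z_{i,\bfv} \in \R^l$ as in the proof of Lemma \ref{lem: Integral estimate}, where $z_{m+1,\bfv}$ is unchanged and $z_{i,\bfv^\theta} = z_{i,\bfv} + \theta_i z_{m+1,\bfv}$ for $i \leq m$. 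Then $\pi_{l+}(u(\theta)v)$ has coordinates $z^\theta_I$ for $I$ with $\#(I\cap\{1,\ldots,m\}) = \min\{l,m\}$; since $n=1$ and $l \leq d-1 = m$, we have $\min\{l,m\} = l$, so $S^+$ consists exactly of the $\binom{m}{l}$ index sets $I \subset \{1,\ldots,m\}$, and $z^\theta_I = z^\theta_{i_1}\wedge\cdots\wedge z^\theta_{i_l}$ for those $I$. The idea is to exploit multilinearity: $z^\theta_I$ is an affine function of $(\theta_{i_1},\ldots,\theta_{i_l})$, and one can bound the measure of $\{\theta : |z^\theta_I| \leq \e\}$ by conditioning on all but one coordinate. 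The refinement over Lemma \ref{lem: General Critical Expo} is that the decay rate $\sum_{i\in I} s_{i1}$ depends on which coordinates we use, so we want to choose, for each realization, the cheapest set of coordinates.

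**Strategy via a greedy/inductive selection of coordinates.** I would argue as follows: since $\|v\| = 1$, there is some index set $J_0$ with $\#J_0 = l$ and $|z_{J_0}| = \|\pi_{l+}(v)\|$ comparable to... no—$\|v\|=1$ only tells us $\max_{\text{all }I}|z_I| = 1$, and the maximizing $I$ need not lie in $S^+$. This is exactly the phenomenon handled in Case 1 vs. Case 2 of Lemma \ref{lem: General Critical Expo}, so I expect to run a similar case split or invoke that lemma's structure. Concretely: either $\|\pi_{l+}(v)\|$ is bounded below (Case 1, trivial), or we can, after relabeling, peel off a coordinate $i_0 \in \{1,\ldots,m\}$ not in the maximizing set and run an induction on $m$, tracking how the exponent $s_{i_0,1}$ enters. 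The bookkeeping should produce the bound with exponent $\min\{\sum_{i\in I'} s_{i1} : I' \subset \{1,\ldots,m\},\ \#I' = l\}$, and then one checks that this equals $\min\{\sum_{i\in I}s_{i1} : \#I = d-l\}$ by complementation: $I \mapsto \{1,\ldots,m\} \setminus I$ is a bijection between $l$-subsets of $\{1,\ldots,m\}$ and $(m-l) = (d-1-l)$-subsets... wait, that gives $d-1-l$, not $d-l$. Since $e_{m+1}$ contributes no "cost" (its coordinate is untouched), the complement in $\{1,\ldots,d\}$ of size $d-l$ is $(\{1,\ldots,m\}\setminus I) \cup \{m+1\}$, and $\sum$ over it equals $\sum_{i\notin I} s_{i1} + s_{m+1,1}$; but there is no $s_{m+1,1}$ since $n=1$ means only $i$-indices up to $m$ — I'll need to be careful here and likely the correct reading is that $\sum_{i\in I}s_{i1}$ with $\#I = d-l$ ranges $I$ over subsets of $\{1,\dots,m\}$ only (as $s_{i1}$ is only defined for $i \leq m$), which matches $\binom{m}{d-l} = \binom{m}{l-1}$...

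**The main obstacle.** The genuinely delicate point, and where I'd spend most effort, is making the inductive peeling argument respect the \emph{weighted} exponents: each coordinate direction $\theta_i$ contributes decay rate $s_{i1}$ (via \eqref{eq: imp IFS} applied to $\mu^{(r)}_{i1}$, which inherits exponent $s_{i1}$ up to a constant depending on $r \in \Xi$ — uniform since $\Xi$ is compact), and we must ensure that when we estimate $\mur(\{|z^\theta_I| \leq \e\})$ by Fubini in the variable $\theta_{i}$ for some $i \in I$, we are free to pick the $i \in I$ with the \emph{smallest} $s_{i1}$ — or rather, the argument must show the worst case over the combinatorial structure is precisely $\min_I \sum_{i\in I} s_{i1}$. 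I expect the right framework is an induction on $l$ (or on $m$) mirroring Lemma \ref{lem: General Critical Expo}: at each stage, identify a "dominant" coordinate vector $z_{i_0,\bfv}$, rescale, drop it, and pick up a factor $\e^{s_{i_0,1}}$ or better. Verifying that the minimum is attained and that the constant $L_\gamma$ stays uniform over $v$ on the unit sphere and $r \in \Xi$ is the crux; everything else is routine application of Lemma \ref{lem: Finding the Critical Exponent} and Proposition \ref{prop: important IFS}.
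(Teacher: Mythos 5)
Your high-level plan is right: reduce to a decay estimate $\mur(E(v,\e)) \ll \e^\gamma$ and invoke Lemma~\ref{lem: Finding the Critical Exponent}, and split on whether $\|\pi_{l-}(v)\|$ is small. But the Case~2 argument is not carried out, and the route you sketch would not give the sharp exponent. An induction modelled on Lemma~\ref{lem: General Critical Expo} loses at every step (its Case~2 bounds $\nu^\rho(R_I)$ only by $\e^{\lambda/2}$) and is designed merely to produce \emph{some} positive exponent; it cannot recover the precise sum $\sum_{i\in I}s_{i1}$.

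The structural point you are circling around but never land on is this: in Case~2 the dominant index set $I$ should be chosen with $m+1\in I$, not $I\subset\{1,\ldots,m\}$. Indeed, $\|\pi_{l-}(v)\|\geq 1/(2K)$ means some $I$ with $\#(I\cap\{1,\ldots,m\})<\min\{l,m\}=l$ has $|v_I|\geq 1/(2K)$; since $n=1$ such $I$ are exactly the $l$-sets containing $m+1$. Then $\{1,\ldots,m+1\}\setminus I$ has exactly $d-l$ elements and lies inside $\{1,\ldots,m\}$, which dissolves the $d-1-l$ versus $d-l$ discrepancy you flag (you were complementing an $I\subset\{1,\ldots,m\}$, which is the wrong $I$). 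From there the argument is a \emph{single} Fubini step, not a peeling induction: set $I_i=(I\setminus\{m+1\})\cup\{i\}$ for $i\notin I$; each $I_i\in S^+$ and, after fixing $x_j$ for $j\in I\setminus\{m+1\}$, the condition $|(\pi_{l+}(u(x)v))_{I_i}|<\e$ is affine in $x_i$ with leading coefficient $\pm v_I$, so it confines $x_i$ to an interval of length $O(K\e)$. These $d-l$ conditions live in disjoint variables $x_i$, $i\notin I$, so Fubini yields $\prod_{i\notin I}\lambda(2K\e)^{s_{i1}}$, i.e.\ exponent $\sum_{i\notin I}s_{i1}\geq\min\{\sum_{i\in J}s_{i1}:\#J=d-l\}$. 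Without identifying that the large coordinate $v_I$ is a $\pi_{l-}$-coordinate and running the simultaneous Fubini over the $d-l$ free variables, the proof does not close.
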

\begin{proof}
Fix \(1 \leq l \leq d-1\) and define \( \gamma = \min\left\{\sum_{i \in I}s_{i1}: \#I = d - l  \right\} \). Fix \( v = v_1 \wedge \cdots \wedge v_l \in \wedge^l \mathbb{R}^d \) such that \( \|v\| = 1 \). For all \( 0 < \epsilon < 1 \), define the set
$$
E(v, \epsilon) = \left\{ x \in \mathbb{R}^m : \|\pi_+(u(x)v)\| < \epsilon \right\}.
$$
Let
$$
K = \left\| \left\{ u(x): x \in \bigcup_{r \in \Xi} \supp(\mur) \right\} \right\|.
$$
Since \( \mu \) has compact support, it follows that \( K < \infty \). Additionally, let \( \lambda > 0 \) be sufficiently large so that for all \( r \in \Xi \) and \( 1 \leq i \leq m \), the following holds for all \( x, y \in \mathbb{R} \):
\begin{align}
    \label{eq: c 1}
    \mur_{i1}([x - y, x + y]) \leq \lambda y^{s_{i1}},
\end{align}
where the existence of such a \( \lambda \) follows from Proposition \ref{prop: important IFS} and the definition of \( \mur_{ij} \).

To estimate \( \mur(E(v, \epsilon)) \), we consider two cases:

\noindent {\bf Case 1:} \( \| \pi_{l-}(v) \| \leq \frac{1}{2K} \). In this case, we have \( \|\pi_{l+}(v)\| = \|v\| = 1 \). Thus, for \( x \in \bigcup_{r \in \Xi} \supp(\mur) \), we get
$$
\|\pi_{l+}(u(x)v)\| \geq \|\pi_{l+}(u(x) \pi_{l+}(v))\| - \|\pi_{l+}(u(x) \pi_{l-}(v))\| \geq \|\pi_{l+}(v)\| - \|u(x) \pi_{l-}(v)\| \geq 1 - K \cdot \frac{1}{2K} = \frac{1}{2}.
$$
Therefore, in this case, we have
$$
\mur(E(v, \epsilon)) \leq 2^\gamma \epsilon^\gamma.
$$

\noindent {\bf Case 2:} \( \| \pi_{l-}(v) \| > \frac{1}{2K} \). In this case, there exists a subset \( \{m+1\} \subset I \subset \{1, \ldots, m+1\} \) of cardinality \( l \) such that \( |v_I| \geq \frac{1}{2K} \). Let us define, for each \( i \notin I \), the set
$$
I_i = I \cup \{i\} \setminus \{m+1\}.
$$

Define the set
$$
E'(v, \epsilon) = \left\{ x \in \mathbb{R}^m : \text{for all } i \notin I, \ |(\pi_+(u(x)v))_{I_i}| < \epsilon \right\}.
$$
It is clear that \( E(v, \epsilon) \subset E'(v, \epsilon) \).

Now, to estimate \( \mur(E'(v, \epsilon)) \), we note that by explicit computation, for every \( J \subset \{1, \ldots, m\} \), the following holds:
$$
(\pi_+(u(x)v))_J = v_J + \sum_{j \in J} \bfe_{j,J} v_{J \cup \{m+1\} \setminus \{j\}} x_{j1},
$$
for some \( \bfe_{j,J} \in \{\pm 1\} \). Thus, for all \( i \notin I \), we have
$$
(\pi_+(u(x)v))_{I_i} = \bfe_{i,I_i} v_I x_{i1} + \left( v_{I_i} + \sum_{j \in I \setminus \{m+1\}} \bfe_{j,I_i} v_{I_i \cup \{m+1\} \setminus \{j\}} x_{j1} \right).
$$
If we fix \( x_{j1} \) for \( j \in I \setminus \{m+1\} \), then for all \( i \notin I \) and \( \epsilon > 0 \), the condition \( |(\pi_+(u(x)v))_{I_i}| < \epsilon \) is equivalent to the condition that \( x_{i1} \) belongs to an interval of size at most \( \frac{\e}{|v_I|} < 2K\epsilon \), which has \( \mur_{i1} \)-measure less than \( \lambda (2K\epsilon)^{s_{i1}} \) (using equation \eqref{eq: c 1}). By Fubini's theorem, we then obtain
$$
\mur(E(v, \epsilon)) \leq \mur(E'(v, \e)) \leq \lambda^{m+1-l} (2K)^{\sum_{i \notin I} s_{i1}} \e^{\sum_{i \notin I} s_{i1}} \leq \lambda^{m+1-l} (2K)^{\sum_{i \notin I} s_{i1}} \e^\gamma.
$$
Thus, by Lemma \ref{lem: General Critical Expo}, we conclude that \( \zeta_l(\mu) \geq \gamma \). This completes the proof of the lemma.

\end{proof}

\subsection{Case $m=1$}
\begin{lem}
\label{lem: m 1 Critical Expo}
    For $m=1$, we have $\zeta_l(\mu)\geq \min\{\sum_{i \in I}s_{1i}: \#I= l\}$ for all $1 \leq l \leq d-1$.
\end{lem}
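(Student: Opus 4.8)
The plan is to mirror, in dual form, the argument used for Lemma \ref{lem: n 1 Critical Expo}, now exploiting that for $m=1$ the subspace $V_l^+$ is exactly the span of the monomials $e_I$ with $1\in I$. Fix $1\le l\le d-1$ (so $l\le n$), set $\gamma=\min\{\sum_{i\in I}s_{1i}:\#I=l\}$, and fix $v=v_1\wedge\ldots\wedge v_l\in V_l$ with $\|v\|=1$. By Lemma \ref{lem: Finding the Critical Exponent} it is enough to produce a constant $C$, independent of $v$ and of $r\in\Xi$, with
\[
\mur\bigl(\{\theta\in M_{1\times n}(\R):\|\pi_{l+}(u(\theta)v)\|\le\epsilon\}\bigr)\le C\epsilon^{\gamma}\qquad(0<\epsilon<1).
\]
Let $K=\|\{u(\theta):\theta\in\bigcup_{r\in\Xi}\supp(\mur)\}\|<\infty$, and fix $\lambda>0$ with $\mur_{1i}([x-y,x+y])\le\lambda y^{s_{1i}}$ for all $r\in\Xi$, $1\le i\le n$, $x\in\R$, $y>0$ (Proposition \ref{prop: important IFS}). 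Write $E(v,\epsilon)=\{\theta:\|\pi_{l+}(u(\theta)v)\|<\epsilon\}$ and split into two cases according to the size of $\pi_{l-}(v)$.

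First, if $\|\pi_{l-}(v)\|<1/(2K)$ then $\|\pi_{l+}(v)\|=1$, and since $u(\theta)$ acts trivially on $V_l^+$, for every $\theta\in\bigcup_r\supp(\mur)$ we get $\|\pi_{l+}(u(\theta)v)\|\ge\|\pi_{l+}(v)\|-\|u(\theta)\pi_{l-}(v)\|\ge 1-K/(2K)=1/2$, so $\mur(E(v,\epsilon))\le 2^{\gamma}\epsilon^{\gamma}$ (it is $0$ for $\epsilon<1/2$ and at most $1$ otherwise). Otherwise $\|\pi_{l-}(v)\|\ge 1/(2K)$, so there is an index set $I\subset\{2,\ldots,d\}$ with $\#I=l$ and $|v_I|\ge 1/(2K)$. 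For each $i\in I$ put $I_i=(I\setminus\{i\})\cup\{1\}$; this contains $1$, hence indexes $V_l^+$, and since $\|\pi_{l+}(u(\theta)v)\|\ge\max_{i\in I}|(u(\theta)v)_{I_i}|$ it suffices to bound the measure of $\{\theta:|(u(\theta)v)_{I_i}|<\epsilon\text{ for all }i\in I\}$.

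Expanding $\wedge^l u(\theta)$ on the monomial basis (using $u(\theta)e_1=e_1$ and $u(\theta)e_{j}=e_j+\theta_{1,j-1}e_1$ for $j\ge2$), one checks that for $i\in I$
\[
(u(\theta)v)_{I_i}=\pm\,v_I\,\theta_{1,i-1}\;+\;\Bigl(v_{I_i}+\sum_{\substack{2\le j\le d,\;j\notin I}}\pm\,v_{(I\setminus\{i\})\cup\{j\}}\,\theta_{1,j-1}\Bigr),
\]
where the signs are $\pm1$ and play no role. The key observation is that the bracketed term depends only on the coordinates $\theta_{1,j-1}$ with $j\notin I$, hence not on $\theta_{1,i-1}$ for any $i\in I$. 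Thus, after fixing $\theta_{1,j-1}$ for $j\notin I$, each inequality $|(u(\theta)v)_{I_i}|<\epsilon$ confines $\theta_{1,i-1}$ to an interval of radius $\epsilon/|v_I|\le 2K\epsilon$, of $\mur_{1,i-1}$-measure at most $\lambda(2K\epsilon)^{s_{1,i-1}}$. Since the $l$ coordinates $\{\theta_{1,i-1}:i\in I\}$ are distinct, Fubini's theorem applied to $\mur=\prod_j\mur_{1j}$ gives
\[
\mur(E(v,\epsilon))\le\prod_{i\in I}\lambda(2K\epsilon)^{s_{1,i-1}}=\lambda^{l}(2K)^{\sum_{i\in I}s_{1,i-1}}\,\epsilon^{\sum_{i\in I}s_{1,i-1}}\le C\epsilon^{\gamma},
\]
because $\{i-1:i\in I\}$ has cardinality $l$, so $\sum_{i\in I}s_{1,i-1}\ge\gamma$, and $\epsilon<1$. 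Combining the two cases and applying Lemma \ref{lem: Finding the Critical Exponent} yields $\zeta_l(\mu)\ge\gamma$.

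The only substantive step is the explicit coordinate computation for $\pi_{l+}(u(\theta)v)$: one must verify that the coefficient of $\theta_{1,i-1}$ in $(u(\theta)v)_{I_i}$ is exactly $\pm v_I$ and that no further $\theta_{1,i-1}$-dependence occurs, so that the system of inequalities is effectively triangular and Fubini applies coordinate by coordinate. This is routine exterior-algebra bookkeeping, entirely parallel (indeed dual) to the corresponding computation in the proof of Lemma \ref{lem: n 1 Critical Expo}, and the sign ambiguities are immaterial since only absolute values enter.
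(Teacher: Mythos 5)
Your proof is correct, but it takes a genuinely different route from the paper's. The paper proves Lemma \ref{lem: m 1 Critical Expo} by a formal reduction to Lemma \ref{lem: n 1 Critical Expo}: it introduces a Hodge-star-type map $*:\wedge^l\R^d\to\wedge^{d-l}\R^d$, a cyclic coordinate permutation $\phi$, and a sign twist $f$ on $M_{1\times n}(\R)$, and checks that under the composite $\psi=\wedge^{d-l}\phi\circ *$ the quantity $\|\pi_{l+}(u(\theta)v)\|$ is carried to the corresponding quantity for the $n=1$ configuration, after which it simply cites Lemma \ref{lem: n 1 Critical Expo}. You instead redo the argument of Lemma \ref{lem: n 1 Critical Expo} directly in the $m=1$ setting: the same two-case split on $\|\pi_{l-}(v)\|$, the same choice of a large coordinate $v_I$ with $I\subset\{2,\ldots,d\}$, and the same exterior-algebra expansion showing that $(u(\theta)v)_{I_i}$ is affine in $\theta_{1,i-1}$ with leading coefficient $\pm v_I$ and with the remainder depending only on $\theta_{1,j-1}$ for $j\notin I$, so the $l$ constraints decouple and Fubini applies coordinate by coordinate. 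The computation is correct (I checked that $u(\theta)e_1=e_1$, $u(\theta)e_j=e_j+\theta_{1,j-1}e_1$ for $j\ge2$, and that the resulting cross-terms involve only $\theta_{1,j-1}$ with $j\notin I$), and the bookkeeping $\{i-1:i\in I\}\subset\{1,\ldots,n\}$ has size $l$ gives exactly the exponent $\gamma$. What the paper's duality buys is concision and the avoidance of repeating the case analysis; what your direct argument buys is self-containment and transparency about the decoupling mechanism, at the cost of redoing the exterior-algebra expansion. Both are valid.
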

\begin{proof}
  Fix $1 \leq l \leq d-1$. For $\theta \in \M_{1 \times n}(\R)$, we set 
  $$
  u_2(\theta) = \begin{pmatrix}
      I_n & \begin{pmatrix}
          \theta_{11} \\ \vdots \\ \theta_{1n}
      \end{pmatrix} \\ & 1
  \end{pmatrix}.
  $$
  Let us define the map $*: \wedge^l\R^d \rightarrow \wedge^{d-l}\R^d$ as the unique linear map so that $*(\bfe_I) = \bfe_{\{1,\ldots, d\} \setminus I}$ for each index set $I$. Also define $\phi: \R^d \rightarrow \R^d$ as unique linear map so that
  $$
  \phi(\bfe_i) = \begin{cases}
      \bfe_{i-1} \text{ if } i \neq 1, \\
      \bfe_{d} \text{ if } i=1.
  \end{cases}
  $$
  The map $\phi$ gives us a linear map $\wedge^{d-l} \phi: \wedge^{d-l}\R^d \rightarrow \wedge^{d-l}\R^d $. Let us define $\psi = \wedge^{d-l}\R^d \circ *: \wedge^l \R^d \rightarrow \wedge^{d-l}\R^d$. Also, define $f: \M_{1 \times n}(\R) \rightarrow \M_{1 \times n}(\R)$ as $f(\theta_{11}, \ldots, \theta_{1n})= (\theta_{11}, - \theta_{12}, \ldots, (-1)^{n-1}\theta_{1n})$. Then it is easy to see that for all $v \in \wedge^l\R^d$ and $\theta \in \Mat$, we have $\psi(u(\theta)v)= u_2(f(\theta)) \psi(v)$ and $\|\pi_+(u(\theta) v) \|= \max\{|(u_2(f(\theta)) \psi(v))_I|:  \#(I \cap \{1, \ldots,n\}) = \min\{d-l,n\} \}$. Thus, the lemma follows from lemma \ref{lem: n 1 Critical Expo}.
\end{proof}

%=====================================================================================================================================

\section{Height Functions}
\label{sec: Height Functions}

For the remainder of the paper, we fix a sequence $\eta_1, \ldots, \eta_{d-1} \in \mathbb{R}$ such that the following holds:
\begin{align*}
    0 < \eta_i &< \zeta_i(\mu), \quad \text{for } 1 \leq i \leq d-1, \\
    \frac{1}{\eta_{i-j}} + \frac{1}{\eta_{i+j}} &< \frac{2}{\eta_i}, \quad \text{for all } 1 \leq i \leq d-1 \text{ and } 1 \leq j \leq \min\{i, d-i\},
\end{align*}
where we define $\frac{1}{\eta_0} = \frac{1}{\eta_d} := 0$. Additionally, we define the following:
\begin{align*}
    \eta &= \min_{1 \leq l \leq d} w_l \eta_l, \\
    \bfn &= (\eta, \eta_1, \ldots, \eta_{d-1}), \\
    C_{\bfn} &= \max_{1 \leq l \leq d-1} C_{\eta_l, l}'.
\end{align*}

\begin{prop}
    \label{prop: Critical Exponent representation}
    For all $1 \leq l \leq d-1$, $r \in \Xi$, $t>1$ and $v= v_1 \wedge \cdots \wedge v_l \in V_l \setminus \{0\}$, the following holds
    \begin{align}
        \label{eq: Critical Exponent representation}
         \int_{\Mat}  \|g_{t} u(x)v\|^{- \eta_l} \, d\mur(x) \leq C_{\bfn} t^{-\eta } \|v\|^{-\eta_l }.
    \end{align}
\end{prop}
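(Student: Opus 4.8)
The plan is to bound $\|g_t u(x)v\|$ from below by $t^{w_l}\|\pi_{l+}(u(x)v)\|$, which converts the left-hand side of \eqref{eq: Critical Exponent representation} into a constant multiple of $t^{-w_l\eta_l}$ times an integral of exactly the form appearing in the definition of the critical exponent $\zeta_l(\mu)$; since $\eta_l<\zeta_l(\mu)$, that integral is finite with the required uniformity in $v$ and $r$.

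First I would reduce to the case $\|v\|=1$: writing $v/\|v\| = (v_1/\|v\|)\wedge v_2\wedge\cdots\wedge v_l$, both sides of \eqref{eq: Critical Exponent representation} are homogeneous of degree $-\eta_l$ in $v$, so the general statement follows from the normalized one. Next comes the geometric input. The element $g_t$ acts diagonally on $V_l$ in the basis $\{e_I\}$, scaling $e_I$ by $t^{w_I}$ with $w_I=\sum_{i\in I,\,i\le m}a_i-\sum_{j:\,m+j\in I}b_j$; in particular $g_t$ preserves the splitting $V_l=V_l^+\oplus V_l^-$, so $\pi_{l+}$ commutes with $g_t$. Hence $\|g_t u(x)v\|\ge\|\pi_{l+}(g_t u(x)v)\|=\|g_t\,\pi_{l+}(u(x)v)\|$. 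Now $\pi_{l+}(u(x)v)\in V_l^+$, and $V_l^+$ is spanned by those $e_I$ with $w_I\ge w_l$ — this is precisely how $w_l$ was defined in Section \ref{subsec: Rep Theory}. Since $t>1$ and $\|\cdot\|$ is the sup-norm on coordinates, applying $g_t$ multiplies each coordinate of $\pi_{l+}(u(x)v)$ by a factor $\ge t^{w_l}$, so $\|g_t\,\pi_{l+}(u(x)v)\|\ge t^{w_l}\|\pi_{l+}(u(x)v)\|$. Combining these gives
\[
\|g_t u(x)v\|^{-\eta_l}\le t^{-w_l\eta_l}\,\|\pi_{l+}(u(x)v)\|^{-\eta_l}.
\]

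Finally I would integrate this inequality against $\mur$. Since $\eta_l<\zeta_l(\mu)$, the definition of the critical exponent (applied with $\|v\|=1$, for the given $r\in\Xi$) furnishes a constant $C'_{\eta_l,l}$ with $\int_{\Mat}\|\pi_{l+}(u(x)v)\|^{-\eta_l}\,d\mur(x)<C'_{\eta_l,l}\le C_{\bfn}$. Because $w_l\eta_l\ge\eta$ and $t>1$, we have $t^{-w_l\eta_l}\le t^{-\eta}$, and undoing the normalization reinstates the factor $\|v\|^{-\eta_l}$; this yields \eqref{eq: Critical Exponent representation}.

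I do not expect a genuine obstacle here. The only points needing care are the commutation $\pi_{l+}\circ g_t=g_t\circ\pi_{l+}$ and the fact that every $g_t$-weight occurring on $V_l^+$ is at least $w_l$, both immediate from the definitions of Section \ref{subsec: Rep Theory}; one should also recall that $\mur$ is a probability measure, so that for the exponent $\eta_l$ lying strictly below the supremum $\zeta_l(\mu)$ the corresponding constant $C'_{\eta_l,l}$ is indeed finite.
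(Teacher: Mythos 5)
Your proof is correct and follows essentially the same route as the paper: drop to $\|\pi_{l+}(g_tu(x)v)\|$, use that $g_t$ commutes with $\pi_{l+}$ and scales every $e_I\in V_l^+$ by a factor $\ge t^{w_l}$ (this is the defining property of $w_l$), then invoke the critical-exponent bound with $\eta_l<\zeta_l(\mu)$ and the inequalities $C'_{\eta_l,l}\le C_{\bfn}$, $w_l\eta_l\ge\eta$. The one small phrasing slip — $V_l^+$ is defined by the combinatorial condition on $I\cap\{1,\dots,m\}$, not as the span of $e_I$ with $w_I\ge w_l$; what you actually use (that every weight on $V_l^+$ is $\ge w_l$) is correct and is how $w_l$ is defined.
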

\begin{proof}
    Fix $1 \leq l \leq d-1$, $r \in \Xi$, $t>1$ and $v= v_1 \wedge \cdots \wedge v_l \in V_l \setminus \{0\}$. Without loss of generality, we may assume that $\|v\| =1$. Then we have
    \begin{align*}
        \int_{\Mat}  \|g_{t} u(x)v\|^{-\eta_l} \, d\mur(x) & \leq \int_{\Mat}  \|\pi_+(g_{t} u(x)v)\|^{-\eta_l} \, d\mur(x)  \\
        &\leq \int_{\Mat} t^{-\eta_l w_l } \|\pi_+( u(x)v)\|^{-\eta_l} \, d\mur(x),
    \end{align*}
    where in the last inequality, we used the fact that $t>1$ and that
    $$g_t \bfe_I= t^{ \sum_{i \in I \cap \{1, \ldots, m\}}a_i - \sum_{j \in I \cap \{m+1, \ldots, d\}} b_{j-m} } \bfe_I  \geq t^{w_l} \bfe_I,$$
    for all $I$, such that $\bfe_I \in V_{l}^+ = \pi_{l+}(V_l)$. The proposition now follows from the definition of $C_{\bfn}$ and $\eta$.
\end{proof}

 For every $0 \leq l \leq d$, we define $\varphi_{l}:\X \rightarrow \R$ as
    \begin{align} \label{eq: def varphi l}
    \varphi_{l}(\Lambda) =  \max \{\|\Lambda_l\|^{-1}: \Lambda_l \in P(\Lambda), \operatorname{rank}(\Lambda_l)=l  \},
    \end{align}
    where $\|.\|$ is defined as in \eqref{eq: def lattice covol}. It is easy to see that $\varphi_0 \equiv \varphi_{d} \equiv 1$.

    \begin{prop}
        \label{prop: contraction varphi}
         For all $t >1$, there exists $\xi(t) \geq 1$, such that the following holds for all $1 \leq l \leq d-1$, $r \in \Xi$ and $\Lambda \in {\X}$ 
        \begin{align*}
            \int_{\Mat} {\varphi}_{l}^{ \eta_l  }(g_{t} u(x) \Lambda) \, d\mur({x})
            &\leq C_{\bfn} t^{- \eta } {\varphi}_l^{ \eta_l }(\Lambda) + \xi(t) \left( \max_{1 \leq j \leq \min\{l, d-l\}}   \varphi_{l-j}(\Lambda) \varphi_{l+j}(\Lambda) \right)^{ \eta_l/2 }.
        \end{align*}
    \end{prop}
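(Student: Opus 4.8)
The plan is to run the Eskin--Margulis--Mozes height-function averaging argument, feeding the fractal contraction estimate of Proposition~\ref{prop: Critical Exponent representation} into the ``dominant'' part and Lemma~\ref{lem EMM} into the ``degenerate'' part. Fix $\Lambda\in\X$, $t>1$, $r\in\Xi$ and $1\le l\le d-1$. Since $\Xi$ and the $\Kcal_{ij}$ are compact there is $C_1>0$, independent of $r$, with $\supp(\mur)\subset M_{m\times n}([-C_1,C_1])$; let $\xi_0(t)\in[1,\infty)$ be the supremum of $\|\wedge^k(g_t u(x))\|$ and $\|\wedge^k(g_t u(x))^{-1}\|$ over $0\le k\le d$ and $x\in M_{m\times n}([-C_1,C_1])$, and set $C=2\xi_0(t)^2$. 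Then $\xi_0(t)^{-1}\|v_{\Lambda_k}\|\le\|g_t u(x)v_{\Lambda_k}\|\le\xi_0(t)\|v_{\Lambda_k}\|$ for every primitive rank-$k$ sublattice $\Lambda_k$ and every $x$ in the box, whence $\varphi_k(g_t u(x)\Lambda')\le\xi_0(t)\varphi_k(\Lambda')$ for all $\Lambda'\in\X$. Finally fix a primitive rank-$l$ sublattice $\Delta\subset\Lambda$ with $\|v_\Delta\|=\varphi_l(\Lambda)^{-1}$ (such a maximizer exists because only finitely many primitive rank-$l$ sublattices have covolume below a given bound).

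Call $x\in\supp(\mur)$ \emph{good} if some primitive rank-$l$ sublattice $M\subset\Lambda$ is $C$-dominant for $g_t u(x)\Lambda$, meaning $\|g_t u(x)v_M\|\le C^{-1}\|g_t u(x)v_{M'}\|$ for all primitive rank-$l$ $M'\neq M$; let $A$ be the good points and $B=\supp(\mur)\setminus A$. If $x\in A$, the distortion bound upgrades $C$-dominance of $M$ to $\|v_M\|\le\tfrac12\|v_{M'}\|$ for all $M'\neq M$, so $M$ is the unique covolume-minimizing primitive rank-$l$ sublattice of $\Lambda$, forcing $M=\Delta$; since a $C$-dominant sublattice realizes $\varphi_l$, this yields $\varphi_l(g_t u(x)\Lambda)=\|g_t u(x)v_\Delta\|^{-1}$ on $A$. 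If $x\in B$, choose a primitive rank-$l$ $M'\subset\Lambda$ with $\varphi_l(g_t u(x)\Lambda)=\|g_t u(x)v_{M'}\|^{-1}$; it fails to be $C$-dominant, so there is a primitive rank-$l$ $M''\neq M'$ with $\|g_t u(x)v_{M''}\|\le C\|g_t u(x)v_{M'}\|$. Writing $j$ for the codimension of $M'\cap M''$ in $M'$ (so $M'\cap M''$ is primitive of rank $l-j$, the primitive hull of $M'+M''$ has rank $l+j$, and $1\le j\le\min\{l,d-l\}$), Lemma~\ref{lem EMM} applied to $g_t u(x)\Lambda$ with the primitive sublattices $g_t u(x)M'$ and $g_t u(x)M''$ gives
\begin{align*}
\varphi_{l-j}(g_t u(x)\Lambda)^{-1}\varphi_{l+j}(g_t u(x)\Lambda)^{-1}&\le D\,\|g_t u(x)v_{M'}\|\,\|g_t u(x)v_{M''}\|\\
&\le DC\,\varphi_l(g_t u(x)\Lambda)^{-2},
\end{align*}
so on $B$ we get $\varphi_l(g_t u(x)\Lambda)^2\le DC\max_{1\le j\le\min\{l,d-l\}}\varphi_{l-j}(g_t u(x)\Lambda)\varphi_{l+j}(g_t u(x)\Lambda)$.

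Combining the two cases, for every $x\in\supp(\mur)$,
\begin{align*}
\varphi_l(g_t u(x)\Lambda)^{\eta_l}\le\|g_t u(x)v_\Delta\|^{-\eta_l}+(DC)^{\eta_l/2}\Big(\max_{1\le j\le\min\{l,d-l\}}\varphi_{l-j}(g_t u(x)\Lambda)\varphi_{l+j}(g_t u(x)\Lambda)\Big)^{\eta_l/2},
\end{align*}
and replacing $\varphi_{l\pm j}(g_t u(x)\Lambda)$ by $\xi_0(t)\varphi_{l\pm j}(\Lambda)$ bounds the second term by $(DC\xi_0(t)^2)^{\eta_l/2}(\max_j\varphi_{l-j}(\Lambda)\varphi_{l+j}(\Lambda))^{\eta_l/2}$. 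Integrating against $\mur$, using $\mur(\Mat)=1$ and Proposition~\ref{prop: Critical Exponent representation} with $v=v_\Delta$ (note $\|v_\Delta\|^{-\eta_l}=\varphi_l(\Lambda)^{\eta_l}$) to estimate $\int_\Mat\|g_t u(x)v_\Delta\|^{-\eta_l}\,d\mur(x)\le C_{\bfn}t^{-\eta}\varphi_l(\Lambda)^{\eta_l}$, gives the claimed inequality with $\xi(t):=\max\{1,\ \max_{1\le l\le d-1}(2D\,\xi_0(t)^4)^{\eta_l/2}\}$, which is $\ge1$ and independent of $\Lambda$ and $r$.

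I expect the only real difficulty to be making the good/bad dichotomy exhaustive while producing \emph{exactly} the stated right-hand side: the covolume-minimizing rank-$l$ sublattice of $g_t u(x)\Lambda$ need neither equal $\Delta$ nor be unique, so one cannot apply Proposition~\ref{prop: Critical Exponent representation} to a single fixed decomposable vector. Choosing the threshold $C$ large relative to the $x$-uniform distortion $\xi_0(t)$ is what fixes this: $C$-dominance can only occur for the unique covolume-minimizer of $\Lambda$ (hence only for $\Delta$), while in every remaining configuration there are two competing primitive sublattices, and the decisive gain is that Lemma~\ref{lem EMM} then yields the mixed term $\varphi_{l-j}\varphi_{l+j}$ \emph{without} a leftover factor $\varphi_l(\Lambda)^{-1}$, so the $t$-dependent (but $\Lambda$- and $r$-independent) constants can simply be swept into $\xi(t)$. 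The remaining points -- measurability of $A$ and $B$, primitivity of $M'\cap M''$, the rank bookkeeping, and the comparison $\|g_t u(x)(M'+M'')\|\ge\|(M'+M'')^{\mathrm{prim}}\|$ of a lattice with its primitive hull -- are routine.
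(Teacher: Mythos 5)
Your proof is correct and follows the same line as the paper's: fix the covolume-minimizing primitive rank-$l$ sublattice of $\Lambda$, dichotomize whether it still realizes $\varphi_l$ after applying $g_t u(x)$, feed the dominant case into Proposition~\ref{prop: Critical Exponent representation} and the degenerate case into Lemma~\ref{lem EMM}. The only differences are in execution: the paper phrases the dichotomy as a simple equality check (rather than your quantitative $C$-dominance), and applies Lemma~\ref{lem EMM} to the \emph{untransformed} sublattices $\Lambda_l,\Lambda_{l,x,t}\subset\Lambda$ directly, using the one-line observation $\|v_{\Lambda_{l,x,t}}\|\geq(\|\Lambda_{l,x,t}\|\,\|\Lambda_l\|)^{1/2}$ (valid since $\Lambda_l$ minimizes covolume in $\Lambda$); this way the distortion factor $\xi'(t)$ enters exactly once, and there is no need to convert $\varphi_{l\pm j}(g_t u(x)\Lambda)$ back to $\varphi_{l\pm j}(\Lambda)$. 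Your route is slightly heavier in constants but the substance is identical.
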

    \begin{proof}
        Fix $1 \leq l \leq d-1$, $ r \in \Xi$, $t >1$ and $\Lambda \in \X$. Let us define
        $$
        \xi'(t) =  \|\{g_tu(x): x \in \bigcup_{r \in \Xi} \supp(\mur) \}\|.
        $$
        Let $\Lambda_l \in P(\Lambda)$ be a sublattice of rank $l$ such that $\varphi_l(\Lambda)=\|\Lambda_l\|^{-1}$.  Let $v_{\Lambda_l}$ be defined as in Section \ref{subsec: Covolume of Lattice}. We
        claim that for all $x \in \supp(\mur)$, we have
        \begin{align}
            \label{eq: w 8}
            \varphi_l(g_tu(x)\Lambda) \leq \max\left\{ \frac{1}{\|g_tu(x)\Lambda_l\|} , D \xi'(t) \left(  \max_{1 \leq j \leq \min\{l, d-l\}} \varphi_{l-j}(\Lambda) \varphi_{l+j}(\Lambda) \right)^{ 1/2 } \right\},
        \end{align}
        where $D$ is defined as in Lemma \ref{lem EMM}.
        To prove the claim, note that if $\varphi_l(g_tu(x)\Lambda) \neq \|g_tu(x)\Lambda_l\|^{-1}$, then the following holds. In this case, there exists $\Lambda_{l,x,t} \in P(\Lambda)$ with $\Lambda_{l,x,t} \neq \Lambda_l$ such that 
        \begin{align*}
            \varphi_l(g_tu(x)\Lambda) &= \|g_tu(x){\Lambda_{l,x,t}}\|^{-1} \\
            &= \|g_tu(x)v_{\Lambda_{l,x,t}}\|^{-1} \quad \text{ where $v_{\Lambda_{l,x,t}}$ is defined as in Section \ref{subsec: Covolume of Lattice}} \\
            &\leq \frac{\xi'(t)}{\|v_{\Lambda_{l,x,t}}\|} \\
            &\leq  \frac{\xi'(t)}{ \left( \|\Lambda_{l,x,t}\| \|\Lambda_l\| \right)^{1/2} } \\
            &\leq \frac{\xi'(t) D}{\left( \|\Lambda_l \cap \Lambda_{l,x,t}\| \|\Lambda_l+ \Lambda_{l,x,t}\| \right)^{1/2}} \quad (\text{using Lemma \ref{lem EMM}})\\
            &\leq  D.\xi'(t) \left( \max_{1 \leq j \leq \min\{l, d-l\}} \varphi_{l-j}(\Lambda) \varphi_{l+j}(\Lambda) \right)^{1/2}
        \end{align*}
        Thus the claim holds. Let $\xi(t)= \max\{(D \xi'(t))^{\eta_i}: 1 \leq i \leq d-1\}$. Using \eqref{eq: w 8}, we have
        \begin{align*}
            &\int_{\Mat} {\varphi}_{l}^{ \eta_l }(g_{t} u(x) \Lambda) \, d\mur({x}) \\
            &\leq \int_{\Mat} \frac{1}{\|g_tu(x) v_{\Lambda_l}\|^{\eta_l }} \, d\mur({x}) + \xi(t) \left(  \max_{1 \leq j \leq \min\{l, d-l\}} \varphi_{l-j}(\Lambda) \varphi_{l+j}(\Lambda) \right)^{ \eta_l/2 }\\
            &\leq C_{\bfn} t^{- \eta} \frac{1}{\|v_{\Lambda_l}\|^{\eta_l }} + \xi(t) \left( \max_{1 \leq j \leq \min\{l, d-l\}} \varphi_{l-j}(\Lambda) \varphi_{l+j}(\Lambda) \right)^{ \eta_l/2 } \\
            &= C_{\bfn} t^{- \eta } {\varphi}_l^{ \eta_l }(\Lambda) + \xi(t) \left( \max_{1 \leq j \leq \min\{l, d-l\}}  \varphi_{l-j}(\Lambda) \varphi_{l+j}(\Lambda) \right)^{ \eta_l/2 },
        \end{align*}
        where the penultimate inequality follows from Proposition \ref{prop: Critical Exponent representation}. Hence, the proposition follows.
    \end{proof}

Let us define
$$
\an = \min\left\{ 1- \frac{\eta_i}{2}\left(  \frac{1}{\eta_{i-j}}+ \frac{1}{\eta_{i+j}} \right): 1 \leq i \leq d-1, 1 \leq j \leq \min \{i, d-i\} \right\},
$$
where we set $1/\eta_0 = 1/\eta_d = 0$. For $0<\e<1$, we define the function $f_{\e, \bfn}: \X \rightarrow \R$ as 
\begin{align}
    \label{eq:def height fun}
    f_{\e, \bfn}(\Lambda)= \e^{-1} + \sum_{l=1}^{d-1}  \varphi_l^{\eta_l }(\Lambda) .
\end{align}

\begin{prop}
    \label{prop: existence of height function}
    For all $t > 1$, there exists $b= b(t, \bfn) \geq 0$ and $0< \e = \e(t,\bfn )< 1$ such that the following holds for all $\Lambda \in \X$ and $ r \in \Xi$ 
    \begin{align}
        \label{eq: height fn contraction}
        \int_{\Mat} f_{\e,\bfn}(g_tu(x)\Lambda) \, d\mur(x) \leq 2C_{\bfn} t^{-\eta} f_{\e, \bfn}(\Lambda) + b.
    \end{align}
\end{prop}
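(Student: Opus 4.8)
The plan is to combine Proposition \ref{prop: contraction varphi} with a carefully chosen set of weights applied to the terms $\varphi_l^{\eta_l}$, using the arithmetic-geometric mean inequality to absorb the cross terms $\varphi_{l-j}\varphi_{l+j}$ into the diagonal terms. The point of the hypotheses $\frac{1}{\eta_{i-j}} + \frac{1}{\eta_{i+j}} < \frac{2}{\eta_i}$ (equivalently $\an > 0$) is precisely to make this absorption possible with a net gain.

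First I would recall from Proposition \ref{prop: contraction varphi} that for each $1 \leq l \leq d-1$,
\begin{align*}
    \int_{\Mat} \varphi_l^{\eta_l}(g_t u(x)\Lambda)\, d\mur(x) \leq C_{\bfn} t^{-\eta} \varphi_l^{\eta_l}(\Lambda) + \xi(t) \Bigl( \max_{1 \leq j \leq \min\{l,d-l\}} \varphi_{l-j}(\Lambda)\varphi_{l+j}(\Lambda) \Bigr)^{\eta_l/2}.
\end{align*}
For a fixed cross term with parameters $l, j$, I would write $\eta_l/2 = \frac{\eta_l}{2}\bigl(\frac{1}{\eta_{l-j}} + \frac{1}{\eta_{l+j}}\bigr) \cdot \frac{\eta_{l-j}\eta_{l+j}}{\eta_{l-j}+\eta_{l+j}}$ — more usefully, apply Young's inequality $AB \leq \frac{A^p}{p} + \frac{B^q}{q}$ with exponents chosen so that $\bigl(\varphi_{l-j}\varphi_{l+j}\bigr)^{\eta_l/2}$ is dominated by a constant multiple of $\varphi_{l-j}^{\eta_{l-j}} + \varphi_{l+j}^{\eta_{l+j}}$ plus possibly a term that is bounded (recalling $\varphi_0 \equiv \varphi_d \equiv 1$ when $l-j = 0$ or $l+j = d$). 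Concretely, since $\frac{\eta_l}{2}\bigl(\frac{1}{\eta_{l-j}} + \frac{1}{\eta_{l+j}}\bigr) = 1 - \alpha_{l,j} < 1$ for the corresponding index, the exponents $\frac{\eta_{l-j}}{\frac{\eta_l}{2}}$ etc. are large enough that Young's inequality yields a bound of the form $\epsilon_0 (\varphi_{l-j}^{\eta_{l-j}} + \varphi_{l+j}^{\eta_{l+j}}) + C(\epsilon_0)$ for any small $\epsilon_0 > 0$, where I use $\an > 0$ to ensure the Young exponents sum correctly and I can push the coefficient $\epsilon_0$ as small as I like at the cost of increasing the additive constant $C(\epsilon_0)$.

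Summing over $l$, the total contribution of all cross terms to $\int \sum_l \varphi_l^{\eta_l}(g_t u(x)\Lambda)\,d\mur(x)$ is at most $\xi(t)\bigl[ K\epsilon_0 \sum_{l=1}^{d-1}\varphi_l^{\eta_l}(\Lambda) + K' C(\epsilon_0)\bigr]$ for combinatorial constants $K, K'$ depending only on $d$. Therefore
\begin{align*}
    \int_{\Mat} \sum_{l=1}^{d-1} \varphi_l^{\eta_l}(g_t u(x)\Lambda)\, d\mur(x) \leq \bigl(C_{\bfn} t^{-\eta} + K \epsilon_0 \xi(t)\bigr) \sum_{l=1}^{d-1}\varphi_l^{\eta_l}(\Lambda) + K' C(\epsilon_0) \xi(t).
\end{align*}
Now choosing $\epsilon_0 = \epsilon_0(t)$ small enough that $K\epsilon_0 \xi(t) \leq C_{\bfn} t^{-\eta}$ makes the coefficient at most $2C_{\bfn} t^{-\eta}$. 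Finally, since $f_{\e,\bfn}(\Lambda) = \e^{-1} + \sum_l \varphi_l^{\eta_l}(\Lambda)$ and $\int \e^{-1}\,d\mur(x) = \e^{-1}$ (as $\mur$ is a probability measure), we get $\int f_{\e,\bfn}(g_t u(x)\Lambda)\,d\mur(x) \leq 2C_{\bfn} t^{-\eta}\sum_l \varphi_l^{\eta_l}(\Lambda) + \e^{-1} + b'$ where $b' = K'C(\epsilon_0(t))\xi(t)$; writing $\e^{-1} = 2C_{\bfn}t^{-\eta}\e^{-1} + (1 - 2C_{\bfn}t^{-\eta})\e^{-1}$ and choosing $\e$ small — here one needs $2C_{\bfn}t^{-\eta} < 1$, which can be arranged by first fixing $t$ large, or by absorbing into $b$ if $2C_{\bfn}t^{-\eta} \geq 1$ is allowed — absorbs the $2C_{\bfn}t^{-\eta}\e^{-1}$ into $2C_{\bfn}t^{-\eta}f_{\e,\bfn}(\Lambda)$ and leaves a finite additive constant $b$. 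The main obstacle is bookkeeping: getting the Young-inequality exponents exactly right so that the $\an > 0$ condition is what is used, and tracking that $\epsilon_0$ can be chosen depending on $t$ after $\xi(t)$ is fixed (there is no circularity since $\xi(t)$ does not depend on $\e$ or $\epsilon_0$). One should also double-check the edge cases $l - j = 0$ and $l + j = d$, where $\varphi_{l-j}^{\eta_{l-j}}$ or $\varphi_{l+j}^{\eta_{l+j}}$ should be interpreted as the constant $1$ (consistent with $1/\eta_0 = 1/\eta_d = 0$), so those cross terms are already bounded and contribute only to the additive constant.
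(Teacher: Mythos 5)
Your proposal is correct, but it uses a different absorption mechanism than the paper. The paper exploits the structure of $f_{\e,\bfn}$ directly: since $f_{\e,\bfn}(\Lambda) \geq \e^{-1}$, one has $\e f_{\e,\bfn}(\Lambda) \geq 1$, and hence for each $(l,j)$ with exponent gap $\alpha_{l,j} := 1 - \tfrac{\eta_l}{2}\bigl(\tfrac{1}{\eta_{l-j}} + \tfrac{1}{\eta_{l+j}}\bigr) \geq \an > 0$ one can multiply the cross term by $1 \leq (\e f_{\e,\bfn}(\Lambda))^{\alpha_{l,j}} \leq \e^{\an}\, f_{\e,\bfn}(\Lambda)^{\alpha_{l,j}}$ to obtain the clean bound $\bigl(\varphi_{l-j}\varphi_{l+j}\bigr)^{\eta_l/2} \leq \e^{\an} f_{\e,\bfn}(\Lambda)$. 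This is a one-parameter argument: $\e$ itself is then tuned (to $\e^{\an} = C_{\bfn}t^{-\eta}/((d-1)\xi(t))$) to control the contraction coefficient, and $b = \e^{-1}(1-C_{\bfn}t^{-\eta})$ drops out. You instead run a two-parameter argument: generalized Young (there is room for a third factor precisely because $\tfrac{1}{p}+\tfrac{1}{q} = \tfrac{\eta_l}{2}(\tfrac{1}{\eta_{l-j}}+\tfrac{1}{\eta_{l+j}}) < 1$, with the deficit $\an$) gives $\bigl(\varphi_{l-j}\varphi_{l+j}\bigr)^{\eta_l/2} \leq \epsilon_0\bigl(\varphi_{l-j}^{\eta_{l-j}} + \varphi_{l+j}^{\eta_{l+j}}\bigr) + C(\epsilon_0)$, $\epsilon_0$ is tuned to $t$, and $\e$ ends up being essentially free (any choice works, with $\e$-dependence shoved into $b$). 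Both methods hinge on $\an > 0$ and both are valid; the paper's is slightly more economical and makes genuine use of the additive $\e^{-1}$ in the definition of $f_{\e,\bfn}$, whereas yours is more generic (it would work verbatim for $\sum_l \varphi_l^{\eta_l}$ with no $\e^{-1}$ term) at the cost of the extra parameter and the Young bookkeeping. Your handling of the edge cases $l-j=0$, $l+j=d$ via $\varphi_0 \equiv \varphi_d \equiv 1$ and the convention $1/\eta_0 = 1/\eta_d = 0$ is also fine, and your closing remark that $\e$ need not be chosen small when $2C_{\bfn}t^{-\eta} \geq 1$ is correct — the statement only asserts existence of some $b \geq 0$, so the $\e^{-1}$ surplus can always be dumped into $b$.
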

\begin{proof}
    Fix $t>1$. Let $\xi(t)$ be the constant provided by Proposition \ref{prop: contraction varphi}. Let $0<\e<1$ be a constant to be determined. Suppose $\Lambda \in \X$. Then using Proposition \ref{prop: contraction varphi}, we get that
    \begin{align}
        &\int_{\Mat} f_{\e,\bfn}(g_tu(x)\Lambda) \, d\mur(x) \nonumber \\
        &= \e^{-1} +  \int_{\Mat} \sum_{l=1}^{d-1}  \varphi_l^{\eta_l }(g_tu(x)\Lambda) \, d\mur(x) \nonumber \\ 
        &\leq \e^{-1}  + C_{\bfn} t^{-\eta} \sum_{l=1}^{d-1} \varphi_l^{\eta_l }(\Lambda)  +  \xi(t) \left( \sum_{l=1}^{d-1}  \max_{1 \leq j \leq \min\{l, d-l\}} \left(  \varphi_{l-j}(\Lambda)\varphi_{l+j}(\Lambda)\right)^{\eta_l/2} \right) \label{eq: w 9}
    \end{align}
    
    Note that 
    \begin{align}
        \varphi_{l-j}(\Lambda) &\leq  f_{\e, \bfn}^{\frac{1}{\eta_{l-j}}}(\Lambda), \label{eq: 1 1 1} \\
        \varphi_{l+j}(\Lambda) &\leq  f_{\e, \bfn}^{\frac{1}{\eta_{l+j}}}(\Lambda), \label{eq: 1 1 2} \\
        1 \leq \left(\e f_{\e, \bfn}(\Lambda) \right)^{1-\frac{\eta_l}{2}\left(\frac{1}{\eta_{l-j}} + \frac{1}{\eta_{l+j} } \right)} &\leq \e^{\an} f_{\e, \bfn}^{1-\frac{\eta_l}{2}\left(\frac{1}{\eta_{l-j}} + \frac{1}{\eta_{l+j} } \right)}(\Lambda). \label{eq: 1 1 3}
    \end{align}
    Thus, we have
    \begin{align}
        \left(  \varphi_{l-j}(\Lambda)\varphi_{l+j}(\Lambda)\right)^{\eta_l/2} &\leq \left( f_{\e, \bfn}^{\frac{1}{\eta_{l-j}}}(\Lambda) f_{\e, \bfn}^{\frac{1}{\eta_{l+j}}}(\Lambda) \right)^{\eta_l/2} \quad \text{ using \eqref{eq: 1 1 1}, \eqref{eq: 1 1 2}}  \nonumber \\
        &= f_{\e, \bfn}^{\frac{\eta_l}{2}\left(\frac{1}{\eta_{l-j}} + \frac{1}{\eta_{l+j} } \right)}(\Lambda)  \nonumber\\
        &\leq f_{\e, \bfn}^{\frac{\eta_l}{2}\left(\frac{1}{\eta_{l-j}} + \frac{1}{\eta_{l+j} } \right)} (\Lambda) \left(\e^{\an} f_{\e, \bfn}^{1-\frac{\eta_l}{2}\left(\frac{1}{\eta_{l-j}} + \frac{1}{\eta_{l+j} } \right)}(\Lambda)) \right) \nonumber     \quad \text{ using \eqref{eq: 1 1 3}}\\
        &\leq \e^{\an} f_{\e, \bfn}(\Lambda). \label{eq: w 10}%Using fact that \e<1. 
    \end{align}
    Thus, we get from \eqref{eq: w 9} and \eqref{eq: w 10} that 
    \begin{align*}
        \int_{\Mat} f_{\e,\bfn}(g_tu(x)\Lambda) \, d\mur(x)  \leq C_{\bfn} t^{-\eta} f_{\e,\bfn}(\Lambda) + \e^{-1}(1- C_{\bfn}t^{-\eta}) + (d-1) \e^{\an} \xi(t) f_{\e, \bfn}(\Lambda)
    \end{align*}
    Choose $\e$ so that $\e^{\an}= \frac{C_{\bfn} t^{-\eta}}{(d-1) \xi(t)}$ and $b= \e^{-1}(1- C_{\bfn}t^{-\eta })$, we get \eqref{eq: height fn contraction}. Thus, the proposition follows.
\end{proof}

%=====================================================================================================================================

\section{The Contraction Hypothesis}
\label{sec: The Contraction Hypothesis}

\begin{defn} [The Contraction Hypothesis]
      \label{def: Contraction Hypothesis}
      Suppose $Y$ is a metric space equipped with a continuous action of $G$. Given a collection of functions $\{ f_\tau: Y \rightarrow [0,\infty]: \tau \in S\}$ for some unbounded set $S \subset (0,\infty)$ and $\beta > 0$, we say that $\mu$ satisfies the $((f_\tau)_\tau, \beta)$-\textbf{contraction hypothesis} on $Y$ if the following properties hold: 

\begin{enumerate}
    \item The set $Y_f = \{y \in Y : f_\tau(y) = \infty\}$ is independent of $\tau$ and is $G$-invariant.

    \item For every $\tau \in S$, $f_\tau$ is uniformly log-Lipschitz with respect to the $G$-action. That is, for every bounded neighborhood $\mc{O}$ of the identity in $G$, there exists a constant $C_\mc{O} \geq 1$ such that for all $h \in \mc{O}$, $z \in Y$, and $\tau \in S$,
    \begin{align}\label{eq: assum log lipschitz f t}
        C_\mc{O}^{-1} f_\tau(z) \leq f_\tau(h z) \leq C_\mc{O} f_\tau(z).
    \end{align}

    \item There exists a constant $c \geq 1$ such that the following holds: for every $\tau \in S$, there exists $T > 0$ such that for all $z \in Y$, $r \in \Xi$, and $f_\tau(z) > T$,
    \begin{align}\label{eq: assum Contraction f t}
        \int_{\Mat} f_\tau(g_\tau u(x) z) \, d\mur(x) \leq c f_\tau(z) \tau^{-\beta}.
    \end{align}
\end{enumerate}

The functions $f_\tau$ will be referred to as \textbf{height functions}.
\end{defn}
\begin{rem}
The above definition of the contraction hypothesis is motivated by the corresponding definition in \cite{Khalilsing}.
\end{rem}

\begin{defn}
\label{def:div}
   Suppose $Y$ is a locally compact second countable metric space equipped with a continuous $G$ action. Given a closed $G$-invariant subset $Y' \subset Y$, $0 < p \leq 1$ and $y \in Y \setminus Y'$, we define $\Div(y,Y', p)$ as the set of all $x \in \Mat$ such that
   $$
    \liminf_{T \rightarrow \infty } \frac{1}{T} \int_0^T \delta_{g_{e^t}u(x)y}(Y \setminus K) \, dt \geq p,
   $$
   for all compact subsets $K \subset Y \setminus Y'$.
\end{defn}

\begin{lem}
\label{lem: discretisation}
    Suppose $Y$ is a locally compact second countable metric space equipped with a continuous $G$ action. Given a closed $G$-invariant subset $Y' \subset Y$, $0 <q< p \leq 1$ and $y \in Y \setminus Y'$, if $x \in \Div(y, Y',p)$, then for all compact subsets $K \subset Y \setminus Y'$ and $t>1$, there exists $N$ such that for every $M>N$, we have
    $$
        \frac{1}{M}\#\{l \in [1,M]\cap \N: g_{t}^lu(x)y \notin K \} \geq q.
    $$
\end{lem}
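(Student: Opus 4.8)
The plan is to pass from the time-average (continuous, over $[0,T]$) statement defining $\Div(y,Y',p)$ to a discrete-time statement along the geometric progression $\{g_t^l u(x)y : l \in \N\}$, losing only an arbitrarily small amount $p-q$ in the density. First I would fix a compact set $K \subset Y\setminus Y'$ and $t>1$, and enlarge $K$ slightly: since the $G$-action is continuous and $[0,\log t]$ is compact, the set $K' := \bigcup_{0 \le s \le \log t} g_{e^s}^{-1} K$ has compact closure in $Y\setminus Y'$ (using that $Y'$ is $G$-invariant, so its complement is $G$-invariant and $\overline{K'}$ stays in it). The point of $K'$ is the elementary observation that if $g_t^l u(x)y \in K$ for some integer $l$, then for \emph{every} $s \in [l\log t,(l+1)\log t]$ we have $g_{e^s}u(x)y = g_{e^{s-l\log t}} g_t^l u(x) y \in K'$ — wait, I need to orient the inclusion correctly: I want that membership of the continuous orbit in $K'$ forces the discrete orbit into $K$, or conversely that failure of the discrete orbit to hit $K'$ forces a whole interval of the continuous orbit out of $K$. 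So I will instead set $K' := \bigcup_{0 \le s \le \log t} g_{e^s} K$ (again compact in $Y\setminus Y'$), and note: if $g_{e^s} u(x) y \in K$ for some $s$, writing $s = l \log t + s'$ with $l = \lfloor s/\log t\rfloor$ and $s'\in[0,\log t)$, then $g_t^l u(x) y = g_{e^{-s'}} g_{e^s} u(x) y \in g_{e^{-s'}} K \subset K'$. Contrapositively, $g_t^l u(x)y \notin K' \implies g_{e^s}u(x)y \notin K$ for all $s \in [l\log t,(l+1)\log t)$.

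With that in hand, the counting is routine. Apply the definition of $\Div(y,Y',p)$ to the compact set $K'$: there is $N_0$ so that for all $T > N_0 \log t$,
\[
\frac{1}{T}\int_0^T \delta_{g_{e^s}u(x)y}(Y\setminus K')\,ds \ge \frac{p+q}{2}.
\]
For an integer $M$, take $T = M\log t$. Split $[0,M\log t)$ into the $M$ dyadic-in-$t$ blocks $[l\log t,(l+1)\log t)$, $l \in [0,M)$. By the contrapositive above, on any block for which $g_t^l u(x)y \notin K'$ the integrand $\delta_{g_{e^s}u(x)y}(Y\setminus K')$ is identically $1$ — no wait, I need $g_{e^s}u(x)y \notin K'$, which is stronger; but $K \subset K'$ so $g_{e^s}u(x)y \notin K'$ certainly implies $g_{e^s}u(x)y\notin K$, and what I actually get from $g_t^l u(x)y \notin K'$ is only that $g_{e^s}u(x)y \notin K$ on the block, not $\notin K'$. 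This means I should run the argument the other way: the blocks where the \emph{integral} $\frac{1}{\log t}\int_{l\log t}^{(l+1)\log t}\delta_{g_{e^s}u(x)y}(Y\setminus K')\,ds$ is \emph{small}, say $< 1$ — i.e. where the continuous orbit spends some time in $K'$ — are the only ones that can contribute to the orbit hitting $K$, and conversely. Cleanly: let $A = \{l \in [1,M]\cap\N : g_t^l u(x)y \in K\}$; I must show $\#A/M \le 1-q$ eventually, equivalently $\#A^c/M \ge q$. It suffices to show: if $l \notin A$... hmm, that's not quite it either since $g_t^l u(x)y\notin K$ doesn't control the block.

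Let me restate the clean route, which is the standard one. Define $B = \{l \in [0,M)\cap\Z : g_{e^s}u(x)y \in K' \text{ for some } s\in[l\log t,(l+1)\log t)\}$. On each block not in $B$, the integrand is $\equiv 1$, so $\int_0^{M\log t}\delta_{g_{e^s}u(x)y}(Y\setminus K')\,ds \ge (M - \#B)\log t$; combined with the $\Div$ inequality this gives $M-\#B \le M(1-\tfrac{p+q}{2})$, i.e. $\#B \ge M\tfrac{p+q}{2}$ — that's the wrong direction. The right normalization: the $\Div$ bound says the orbit is \emph{outside} $K'$ a fraction $\ge \tfrac{p+q}{2}$ of the time, hence \emph{inside} $K'$ a fraction $\le 1-\tfrac{p+q}{2}$; so $\#B \log t \ge \int(\text{time in }K') $... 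I am going in circles on bookkeeping that the paper will do carefully. The structural plan is firm: (i) replace $K$ by a $t$-thickening $K'$, compact in $Y\setminus Y'$; (ii) apply the hypothesis $x\in\Div(y,Y',p)$ to $K'$ with tolerance $(p-q)/2$ to get an eventual lower bound on the fraction of continuous time spent outside $K'$; (iii) observe each discrete index $l$ with $g_t^l u(x)y \in K$ forces the \emph{whole} block $[l\log t,(l+1)\log t)$ to lie in $K'$ (that inclusion is the correct one, since $g_{e^s}u(x)y = g_{e^{s'}}g_t^l u(x)y \in g_{e^{s'}}K \subset K'$ for $s' = s - l\log t \in [0,\log t)$); (iv) therefore $\log t \cdot \#\{l\in[1,M] : g_t^l u(x)y\in K\}$ is at most the total continuous time spent in $K'$, which is $\le M\log t\,(1-\tfrac{p+q}{2}) \le M\log t\,(1-q)$ for $M$ large; dividing by $M\log t$ and rearranging gives $\frac1M\#\{l\in[1,M] : g_t^l u(x)y\notin K\}\ge q$, as desired — with $N$ chosen so that $N\log t$ exceeds the threshold coming from (ii), and also large enough to absorb boundary/endpoint effects (the at most one partial block, the indices $l\in\{0,1\}$).

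The only genuine point requiring care — the ``main obstacle,'' though it is minor — is step (i): verifying that the thickened set $K' = \bigcup_{0\le s\le\log t} g_{e^s}K$ is relatively compact \emph{and} stays inside $Y\setminus Y'$. Relative compactness is immediate since it is the image of the compact set $[0,\log t]\times K$ under the continuous action map $(s,y')\mapsto g_{e^s}y'$; staying in $Y\setminus Y'$ uses that $Y'$ is closed and $G$-invariant, so $Y\setminus Y'$ is open and $G$-invariant, hence contains $g_{e^s}K$ for every $s$, and we may shrink: actually $\overline{K'}$ is compact and contained in the open $G$-invariant set $Y\setminus Y'$? No — $\overline{K'}$ compact, and $K'\subset Y\setminus Y'$, but the closure could in principle touch $Y'$; however $Y\setminus Y'$ is open so... $\overline{K'}$ need not be inside it. The fix, again standard: one does not need $\overline{K'}$, only that $K'$ itself is a subset of $Y\setminus Y'$ on which $\delta_{\cdot}(Y\setminus K')$ makes sense, and one applies Definition \ref{def:div} to the compact set $\overline{K'}\cap(\text{something})$ — but cleanly, since $Y$ is locally compact and $Y\setminus Y'$ open, for the compact $[0,\log t]\times K$ one can choose $K'$ to be a compact neighborhood of its image contained in $Y\setminus Y'$; then $K'$ is compact, $K'\subset Y\setminus Y'$, and $g_{e^{s'}}K\subset K'$ for $s'\in[0,\log t]$ still holds. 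That is the version I will use, and the rest is the elementary counting sketched above.
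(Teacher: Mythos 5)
Your proof is correct and follows essentially the same route as the paper: thicken $K$ by the flow over a time interval of length $\log t$ to a compact set $K'$ (the paper's $K^t = \bigcup_{t^{-1}\le\tau\le 1}g_\tau K$ thickens backward while yours thickens forward, a mirror-image convention), observe that $g_t^l u(x)y\in K$ forces the continuous orbit into $K'$ over a full block of length $\log t$, and then apply the $\Div$ hypothesis to $K'$. One small note: your step (i) is over-cautious — $K'=\bigcup_{0\le s\le\log t}g_{e^s}K$ is automatically compact and contained in $Y\setminus Y'$, being the continuous image of the compact set $[0,\log t]\times K$ and lying inside the $G$-invariant set $Y\setminus Y'$, so no closures or shrinking to a compact neighborhood are needed.
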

\begin{proof}
    Fix $x \in \Div(y, Y',p)$ and a compact set $K \subset Y \setminus Y'$. Note that it is enough to show that  
    \begin{align}
        \label{eq: a b c d 1}
        \limsup_{M \rightarrow \infty} \frac{1}{M}\#\{l \in [1,M]\cap \N: g_{t}^lu(x)y \in K \} \leq 1- p.
    \end{align}
    To prove \eqref{eq: a b c d 1}, define $K^t= \cup_{\tau=t^{-1}}^1 g_{\tau} K$. Clearly if $g_t^lu(x)y \in K$, then $g_\tau u(x)y \in K^t$ for all $\tau \in [t^{l-1}, t^{l}]$. Thus, we have
    \begin{align*}
        &\limsup_{M \rightarrow \infty} \frac{1}{M}\#\{l \in [1,M]\cap \N: g_{t}^lu(x)y \in K \} \\
        & \leq  \limsup_{M \rightarrow \infty} \frac{1}{M} \sum_{l=1}^M \frac{1}{\log t} \int_{(l-1) \log t}^{l \log t} \delta_{g_{e^\tau} u(x)y}(K^t)\, d\tau \\
        &\leq \limsup_{T \rightarrow \infty } \frac{1}{T} \int_0^T \delta_{g_{e^\tau}u(x)y}( K^t) \, d\tau \\
        &\leq 1-p,
    \end{align*}
    where last inequality follows from the definition of $\Div(y, Y', p)$. Hence \eqref{eq: a b c d 1} holds. This proves the lemma.
\end{proof}

The main aim of this section is to prove the following theorem.
\begin{thm}
\label{thm: contraction implies dimesnion bound}
    	Let $Y$ be a locally compact second countable metric space equipped with a continuous action of $G$. Assume that there exists a collection of continuous functions $\{ f_\tau: Y \rightarrow [0,\infty]: \tau \in S\}$ for some unbounded set $S \subset (0,\infty)$ and $0<\beta< (a_1+b_1)s $, such that $\mu$ satisfies the $(\{f_\tau\}_{\tau \in S}, \beta)$-contraction hypothesis on $Y$. Assume that $Y_f = \{y \in Y : f_\tau(y) = \infty\}$, which is independent of $\tau$ and is $G$-invariant. Then for all $y\in Y \backslash Y_f$ and $0<p\leq 1$, 
        \begin{align}
        \label{eq: cont implies dimension bound}
           \dim_P \left( \Div(y,Y_f, p) \cap \Kcal \right) \leq s-\frac{p\b}{a_1+b_1}.
        \end{align}
        Also, for any sequence $(c_{\tau})_{\tau \in S}$ of positive real numbers and $0<a \leq (a_1+ b_1)s- \beta$, we have
        \begin{align}
        \label{eq: cont implies dimension bound 2}
           \dim_P \left(x\in \mc{K}: \substack{\text{ for all $\tau \in S$, the following holds for all sufficiently large $t$} \\  f_\tau(g_tu(x)y) \geq c_\tau t^{a} }  \right) \leq s-\frac{a+\beta }{a_1+b_1} .
        \end{align}
\end{thm}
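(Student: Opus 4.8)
The plan is to reduce both bounds to a single covering estimate for level-$M$ cylinders of $\Kcal$ and to obtain that estimate by iterating the contraction hypothesis along the $g_\tau$-orbit, using the self-similarity of $\Kcal$ to identify one application of $g_\tau$ with a refinement of the cylinder decomposition. To set up, I would fix $\tau=\tau_0\in S$ large (to be sent to infinity along the unbounded set $S$), and for each integer $l\ge 0$ put $K_{ij}(l)=\lceil l(a_i+b_j)\log\tau_0/\log(1/c_{ij})\rceil$, chosen so that the ``defect'' $r_{ij}(l):=\tau_0^{\,l(a_i+b_j)}c_{ij}^{K_{ij}(l)}$ always lies in $[c_{ij},1]\subset[c_{ij},c_{ij}^{-1}]$; let $\Fcal_l$ be the family of product cylinders of $\Kcal$ of multilevel $(K_{ij}(l))_{ij}$. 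From Proposition~\ref{prop: important IFS} one gets $\#\Fcal_l\asymp\tau_0^{\,lA}$ with $A:=\sum_{ij}(a_i+b_j)s_{ij}$, and every $Q\in\Fcal_M$ is coverable by $\lesssim\tau_0^{\,M((a_1+b_1)s-A)}$ sets of diameter $\delta_M:=\tau_0^{-M(a_1+b_1)}$ (cover each factor cylinder $\Kcal_{ij,\te}$, of length $\asymp\tau_0^{-M(a_i+b_j)}$, by $\delta_M$-intervals and use \eqref{eq: imp IFS}; note $a_1+b_1=\max_{ij}(a_i+b_j)$).

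The geometric heart of the matter is the identity that, for $Q\in\Fcal_l$ with base point $\mathrm{pt}_Q$ and $x\in Q$,
\[
g_\tau^{\,l+1}u(x)y \;=\; g_\tau\,u\big(r(Q)\cdot\hat\delta+w(Q)\big)\,z_Q,\qquad z_Q:=g_\tau^{\,l}u(\mathrm{pt}_Q)y,
\]
where $r(Q)\in\Xi$, the factor $\hat\delta$ ranges over $\Kcal$ (parametrising the position of $x$ in $Q$), $r(Q)\cdot$ denotes componentwise scaling, and $w(Q)$ stays in a fixed bounded set; consequently the normalised restriction of $\mu$ to $Q$, carried to the $\hat\delta$-coordinate and rescaled by $r(Q)$, is exactly $\mu^{(r(Q))}$. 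Thus Definition~\ref{def: Contraction Hypothesis}(3) can be reapplied, but now for the \emph{perturbed} measure $\mu^{(r(Q))}$ — this is precisely why the hypothesis is assumed for the whole family $\{\mu^{(r)}\}_{r\in\Xi}$. Combined with the log-Lipschitz property (2) (the conjugates $g_\tau u(\cdot)g_\tau^{-1}$ relevant on $Q$ are bounded \emph{uniformly in $\tau_0$}, by the choice of $K_{ij}(l)$), this yields, whenever $f_\tau(z_Q)>T$, a one-step contraction $\sum_{Q'\subset Q,\,Q'\in\Fcal_{l+1}} f_\tau(g_\tau^{\,l+1}u(\mathrm{pt}_{Q'})y)\le c'\tau_0^{-\beta}\#\{Q'\subset Q\}\,f_\tau(z_Q)$, and the crude bound by $\#\{Q'\subset Q\}\,b(\tau_0)$ otherwise. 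Iterating the same decomposition one also gets $\int_\Kcal f_\tau(g_\tau^{\,j}u(x)z)\,d\mu^{(r)}(x)\le (C_*\tau_0^{-\beta})^{j}f_\tau(z)+b''(\tau_0)$ for all $j\ge1$, $r\in\Xi$, $z\in Y\setminus Y_f$, the key point being that the floor contributions telescope into a single summand $b''(\tau_0)$ (with $C_*$ independent of $\tau_0$) rather than compounding over the $j$ steps.

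I would then run the iteration in the spirit of \cite{KKLM}: for $L\subset[1,M]$ put $\Omega_L=\{x\in\Kcal:f_\tau(g_\tau^{\,l}u(x)y)>T\text{ for all }l\in L\}$; peel off $\max L$, split $\Kcal$ into the cylinders on which the remaining indicators are essentially constant, apply Markov's inequality and the one-step estimate, and track the auxiliary quantities $I_{L'}:=\int_\Kcal f_\tau(g_\tau^{\max L'}u(x)y)\prod_{l\in L'}\mathbf 1[f_\tau(g_\tau^{\,l}u(x)y)>T]\,d\mu(x)$. This gives a recursion $I_{L'}\le\rho\,I_{L'\setminus\{\max L'\}}$ with $\rho=\rho(\tau_0)\lesssim\tau_0^{-\beta}\to0$ as $\tau_0\to\infty$, provided $T=T(\tau_0)$ is taken large enough that $b''(\tau_0)/T(\tau_0)$ is small (permissible, since enlarging $T$ preserves (3)); unwinding yields $\mu(\Omega_L)\le T^{-1}I_L\ll\rho^{|L|}$. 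Since $\mu(Q)\asymp\tau_0^{-AM}$ for $Q\in\Fcal_M$, at most $\asymp\tau_0^{\,AM}\mu(\Omega_L)\ll\tau_0^{\,AM}\rho^{|L|}$ members of $\Fcal_M$ meet $\Omega_L$. For \eqref{eq: cont implies dimension bound}: fix $q<p$; given $x\in\Div(y,Y_f,p)\cap\Kcal$, Lemma~\ref{lem: discretisation} (applied with a compact neighbourhood of the sublevel set $\{f_\tau\le T\}$) provides, for all large $M$, an $L=L(x,M)$ with $|L|\ge qM$ and $x\in\Omega_L$; summing over the $\le2^M$ choices of $L$ and covering each surviving cylinder by $\lesssim\tau_0^{\,M((a_1+b_1)s-A)}$ sets of diameter $\delta_M$ gives $L_{\delta_M}(\Div(y,Y_f,p)\cap\Kcal)\ll(2M)^{O(1)}\tau_0^{\,M((a_1+b_1)s-\beta q)}$, so Lemma~\ref{lem: Falconer} yields $\dim_P\le\frac{(a_1+b_1)s-\beta q}{a_1+b_1}+O(1/\log\tau_0)$; letting $\tau_0\to\infty$ and then $q\uparrow p$ gives the claim. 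For \eqref{eq: cont implies dimension bound 2} one instead takes $L=[l_0,M]$ and replaces the threshold at time $l$ by $c_\tau\tau_0^{\,al}$, which inserts an extra factor $\tau_0^{-aM}$ into the count of surviving cylinders, and the same computation gives $\dim_P\le s-\frac{a+\beta}{a_1+b_1}$.

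The step I expect to be the main obstacle — and which the authors' own remark flags — is this iteration under unequal weights: the distinct expansion rates $\tau_0^{a_i+b_j}$ prevent one $g_\tau$-step from matching the IFS levels in all directions simultaneously, forcing the non-uniform multilevels $K_{ij}(l)$ and the systematic reuse of the perturbed measures $\mu^{(r)}$, $r\in\Xi$; and the bookkeeping must be arranged so that the ``floor'' contributions (from times where $f_\tau$ is small) stay subordinate to the $\tau_0^{-\beta}$-decay harvested at the times in $L$, which is where the freedom to enlarge $T$ and to send $\tau_0\to\infty$ is used.
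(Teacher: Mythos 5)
Your framework is the paper's: discretise time via Lemma~\ref{lem: discretisation}, build cylinder families whose levels $N_l(i,j)$ (your $K_{ij}(l)$) are tuned so that $g_\tau^l$ conjugated with a level-$l$ cylinder produces a perturbed measure $\mu^{(r)}$ with $r\in\Xi$, apply the contraction hypothesis for those perturbed measures, convert a measure bound on surviving cylinders into a covering count, invoke Lemma~\ref{lem: Falconer}, and finally send $\tau_0\to\infty$ and $q\uparrow p$. All of this is correctly identified and matches the proof in Section~\ref{sec: The Contraction Hypothesis} (your pushforward identity is Observation~4, your one-step estimate is Observations~5--7, your covering step is Observation~8, your two target sets are $Z(N)$ and $Z'(N)$).

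However, the specific recursion you assert --- $I_{L'}\le\rho\,I_{L'\setminus\{\max L'\}}$ with $\rho\lesssim\tau_0^{-\beta}$, for \emph{arbitrary} $L'$ --- does not hold when $\max L'$ and the next-largest element of $L'$ differ by more than one. Writing $l_k=\max L'$ and $l_{k-1}$ for the next element, the contraction hypothesis moves you from time $l_k$ to time $l_k-1$ (it applies there because $f_\tau>T$ at $l_k$ forces $f_\tau>T'$ at $l_k-1$ via log-Lipschitz), but then you are holding an integral of $f_\tau(g_\tau^{\,l_k-1}u(\cdot)y)$, and $\Omega_{L'}$ carries no information at times in $(l_{k-1},l_k-1]$; there is no way to relate this to $f_\tau(g_\tau^{\,l_{k-1}}u(\cdot)y)$ without either paying an uncontrolled multiplicative factor $B_t^{\,l_k-1-l_{k-1}}$ (from log-Lipschitz alone) or an additive floor contribution $b''$ that is \emph{not} subordinate to the harvest: $\mu(\Omega_{L'\setminus\max})$ can be as small as $\rho^{|L'|-1}$, so ``$b''/T$ small'' fails against it, and no fixed choice of $T=T(\tau_0)$ can beat $\rho^{|L|}$ uniformly in $|L|$. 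The paper avoids this precisely by refining $\Omega_L$ to the sets $Z(N,M,Q)$ that specify the \emph{exact} sign pattern over $\{1,\dots,M\}$ (not just a lower bound on the large times): decomposing $Q$ and its complement into maximal intervals, it applies the contraction only across good intervals, and at each interval boundary uses the constraint $f_t\le T$ at the adjacent bad time (implied by membership in $Z(N,M,Q)$ but not by membership in $\Omega_L$) to cap the integral by $AT\mu(R)$ and reset the recursion; the price is the $2^M$ choices of $Q$, which is absorbed into the $\log 2/\log\tau_0$ error that vanishes as $\tau_0\to\infty$. Switching your $\Omega_L$ to the exact-pattern sets and carrying out the interval decomposition (Observation~10 / display~\eqref{eq: measure induct 1}) fixes the argument; the rest of your outline then goes through essentially verbatim.
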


\begin{proof}
    Fix $y \in Y$ and $q \in(0,p)$. Without loss of generality, we may assume $0 \in \Kcal$ by replacing $y$ with $u(x)y$ for some $x \in \Kcal$. 

    Let us briefly recall some notation related to the fractal $\Kcal$. For $1 \leq i \leq m$ and $1 \leq j \leq n$, the set $\Kcal_{ij}$ is the limit set of the IFS $\Phi_{ij} = \{\phi_{ij,e} : e \in E_{ij}\}$, with a common contraction ratio $c_{ij}$ and cardinality $p_{ij} = \#E_{ij}$. The dimension of $\Kcal_{ij}$ is given by $s_{ij} = -\log p_{ij} / \log c_{ij}$. The set $\Kcal = \prod_{ij} \Kcal_{ij} \subset \Mat$ has dimension $s = \sum_{ij} s_{ij}$. 

    The measure $\mu_{ij}$ denotes the normalized restriction of $H^{s_{ij}}$ to $\Kcal_{ij}$, and the measure $\mu$ on $\Kcal$ is defined as $\mu = \otimes_{ij} \mu_{ij}$. For $1 \leq i \leq m$, $1 \leq j \leq n$, and $l \in \N$, we define $\Fcal_{ij}(l)$ as in \eqref{eq: def K w, collection} corresponding to $\Phi_{ij}$. We also define $\sigma_{ij}: E_{ij}^\N \rightarrow \R$ as in \eqref{eq:def sigma}. Furthermore, we define $\kappa_{ij}: \cup_l \Fcal_{ij}(l) \rightarrow \R$ by 
    \[
    \kappa_{ij} \big(\phi_{ij,\bfe_1} \circ \cdots \circ \phi_{ij,\bfe_l}(\Kcal_{ij})\big) = \phi_{ij,\bfe_1} \circ \cdots \circ \phi_{ij,\bfe_l}(0).
    \]

    Fix $\alpha > 0$ large enough such that $\bigcup_{r \in \Xi} \supp(\mur) \subset \M_{m \times n}([-\alpha, \alpha])$, and define $O \subset G$ by 
    \begin{align}
        \label{eq: w 17}
        O = \{u(x) : x \in \M_{m \times n}([-2\alpha, 2\alpha])\}.
    \end{align}
    Let $A > 1$ be a constant such that \eqref{eq: assum log lipschitz f t} holds for all $\tau \in S$, $z \in Y$, and $h \in O$ with $C_O = A$. Fix $t \in S$ sufficiently large such that $t^{a_i + b_j} > c_{ij}^{-1}$ for all $i$. Let $T' > 0$ and $c > 0$ be constants such that \eqref{eq: assum Contraction f t} holds for $f_t$ and all $z \in Y$ with $f_t(z) > T'$. Suppose $B_t>1$ is a constant such that \eqref{eq: assum log lipschitz f t} holds for all $\tau \in S$, $z \in Y$, and $h \in \{g_su(x): s \in [t^{-1},t], x \in \M_{m \times n}([-2\alpha, 2\alpha])\}$ with $C_O = B_t$. Define $T = \max\{f_t(y), A^2B_tT'\}$. For simplicity, let $g = g_t$. 

    For every $k \in \N$, $1 \leq i \leq m$, and $1 \leq j \leq n$, define $N_k(i,j)$ as the unique integer such that 
    \[
    c_{ij}^{N_k(i,j) + 1} < t^{-k(a_i + b_j)} \leq c_{ij}^{N_k(i,j)}.
    \]
    Next, define $\Fcal(k) = \prod_{ij} \Fcal_{ij}(N_k(i,j))$ and $\kappa: \cup_k \Fcal(k) \rightarrow \Mat$ as the restriction of $\prod_{ij} \kappa_{ij}$. Since $0 \in \Kcal$, it follows that $\kappa(R) \in R$ for all $R \in \cup_k \Fcal(k)$.  For notational convenience, we also set $\Fcal(0)= \{\Kcal\}$ and $\kappa(\Kcal)=0$.
    Define the element $\bfr_k \in \Xi$ for all $k \in \N$ by 
    \begin{align}
        \label{eq: w 13}
        (\bfr_k)_{ij} = t^{k(a_i + b_j)}c_{ij}^{N_k(i,j)} \in [1, c_{ij}^{-1}].
    \end{align}
    Finally, for all $r \in \Mat$, define $\psi_r: \Mat \rightarrow \Mat$ by 
    \begin{align}
        \label{eq: def psi}
        \psi_r((\theta_{ij})) = (r_{ij}\theta_{ij}).
    \end{align}

    Note that if $x \in \Div(y,Y_f,p)$, then by Lemma \ref{lem: discretisation}, we know that for every compact subset $K$ of $Y \setminus Y_f$, there exists $N$ such that for every $M>N$, we have
    \begin{align}
        \label{eq: ww 15}
         \frac{1}{M}\#\{l \in [1,M]\cap \N: g_{t}^lu(x)y \notin K \} \geq q.
    \end{align}
    Using continuity of $f_t$, we know that $f_t^{-1}([0,T])$ is a compact subset of $Y \setminus Y_f$, and hence using \eqref{eq: ww 15} for $K= f_t^{-1}([0,T])$, we get that 
    \begin{align}
    \label{eq: w 15}
         \Div(y,Y_f,p) \cap \Kcal &\subset \bigcup_{N \in \N  }  Z(N), 
    \end{align}
    where
    \begin{align*}
          Z(N) &= \left\{x \in \Kcal: \frac{1}{M}\#\{l \in [1,M]\cap \N: f_t(g^l u(x)y)> T \} \geq q \text{ for all } M \geq N \right\}.
    \end{align*}
Similarly,
    \begin{align}
    \label{eq: w 15'}
         \left\{x\in \mc{K}: \substack{\text{ for all $\tau \in S$, the following holds for all large enough $s$} \\  f_\tau(g_su(x)y) \geq c_\tau s^{a} } \right\} &\subset \bigcup_{N \in \N  }  Z'(N),
    \end{align}
    where 
    \begin{align*}
      , \\
        Z'(N) &= \{x \in \Kcal: f_t(g^M u(x)y)> \max\{T,c_t t^{Ma}\} \text{ for all } M \geq N\}.
    \end{align*}
    Also define for $i<j$, the set
    $$B(i,j)= \{x \in \Kcal: f_t(g^l u(x)y)> T \text{ for all } i < l \leq j\}.$$
     Now fix $N \in \N$.  Before proceeding further, let us make some easy observations: \\

       \noindent {\bf Observation 1:} For $M \in \N$ and $K_1, K_2 > 0$, assume that $x \in R \in \Fcal(M)$ satisfies $K_1 \leq f_t(g^M u(x)y) \leq K_2$. Then for all $x' \in R$, we have $ K_1/A \leq f_t(g^M u(x')y) \leq AK_2$. 

      \noindent {\bf Explanation:} If $x' \in R$, the $(i,j)$-th coordinate of $x' - x$ is less than $c_{ij}^{N_M(i,j)} = (\bfr_M)_{ij} t^{-M(a_i+b_j)}$ times the diameter of $\supp(\mu_{ij})$, which equals $t^{-M(a_i+b_j)}$ times the diameter of $\supp(\mu_{ij}^{(\bfr_M)})$. This is further less than $2t^{-M(a_i+b_j)} \alpha$. Define $x'' \in \Mat$ as the vector whose $(i,j)$-th entry is $t^{M(a_i+b_j)}$ times the $(i,j)$-th entry of $x' - x$. Then $u(x'') \in O$, and we obtain
       \[
        f_t(g^M u(x')y) = f_t(g^M u(x' - x) g^{-M} g^M u(x)y) = f_t(u(x'') g^M u(x)y) \begin{cases}
            \geq \frac{f_t(g^M u(x)y)}{A} \geq \frac{K_1}{A}. \\
            \leq Af_t(g^M u(x)y) \leq AK_2.
        \end{cases}
        \]
        Thus, the observation follows. \\

        \noindent {\bf Observation 2:} For all $  i< l \leq j$, the following holds: if $R \in \Fcal(l)$ satisfies $R \cap B(i,j) \neq \emptyset$, then $f_t(g^l u(x)y) > T/A$ for all $x \in R$. Similarly for all $M>N$, if $R \in \Fcal(M)$ satisfies $R \cap Z'(N) \neq \emptyset$, then $f_t(g^M u(x)y) > \max\{ T, c_t t^{M a}\}/A$ for all $x \in R$. 

        \noindent {\bf Explanation:} This observation follows directly from the definitions of $B(i,j)$, $Z'(N)$, and from Observation 1. \\

        \noindent {\bf Observation 3} For any measurable function $h: \Mat \rightarrow \R_+$, measurable set $X \subset \Mat$ and $M \in \N$, we have
        $$
        \sum_{\substack{R \in \Fcal(M) \\ R \cap X \neq \emptyset} } \int_R h(x)\, d\mu(x) \leq \sum_{\substack{R \in \Fcal(M-1) \\ R \cap X \neq \emptyset} } \int_R h(x)\, d\mu(x).
        $$
        \noindent {\bf Explanation:} Using the fact that each $(\Psi_{ij})$ satisfies the open set condition, we get from Proposition \ref{prop: important IFS} that $\mu(R \cap R')=0$ for all $R \neq R' \in \Fcal(M)$. Thus for any $R \in \Fcal(M-1)$, the following holds  
        $$
        \int_R h(x) \, d\mu(x) = \sum_{\substack{R' \in \Fcal(M) \\ R' \subset R}} \int_{R'} h(x) \, d\mu(x).
        $$
        The observation now follows immediately. \\

        \noindent {\bf Observation 4} For any measurable function $h: Y \rightarrow \R_+$, $M \in \N$ and $R \in \Fcal(M)$, $y \in Y$, we have
        $$
        \int_R h(g^M u(x) y) \, d\mu(x) = \mu(R) \int_{\Mat} h(u(x) g^M u(\kappa(R)) y)\, d\mu^{(\bfr_M)}(x).
        $$
        \noindent {\bf Explanation:} Let us define the matrix $\bfs_M \in \Mat$ as
        \begin{align}
        \label{eq: def bfs}
            (\bfs_M)_{ij}= c_{ij}^{N_{M}(i,j)}.
        \end{align}
        Using the definition of $\Fcal(M)$ and $\kappa$, it is easy to see that $R = \psi_{\bfs_M}(\Kcal) + \kappa(R)$, where $\psi_s$ is defined as in \eqref{eq: def psi}. Since $\mu = \otimes_{ij} \mu_{ij}$ and $\mu_{ij}$ is a Bernoulli measure using the Proposition \ref{prop: important IFS} corresponding to the uniform measure on $E_{ij}$, we get that $\frac{1}{\mu(R)}\mu|_{R}$ equals pushforward of $\mu$ under the map $x \mapsto\psi_{\bfs_M}(x) + \kappa(R)$. Also, note that for all $ x\in \Mat$, we have $g^Mu(\psi_{\bfs_M}(x)) g^{-M}= u(\psi_{\bfr_M}(x))$. Thus, we have
        \begin{align*}
             \int_R h(g^M u(x) y) \, d\mu(x) &= \mu(R) \int_{\Mat} h(g^M u( \psi_{\bfs_M}(x) + \kappa(R)) y) \, d\mu(x) \\
             &= \mu(R) \int_{\Mat} h(g^M u(\psi_{\bfs_M}(x)) g^{-M} g^M  u(\kappa(R) ) y) \, d\mu(x) \\
             &= \mu(R) \int_{\Mat} h( u(\psi_{\bfr_M}(x)) g^M  u(\kappa(R) ) y) \, d\mu(x) \\
             &=\mu(R) \int_{\Mat}  h( u(x) g^M  u(\kappa(R) ) y) \, d\mu^{(\bfr_M)}(x).
        \end{align*} \\

        \noindent {\bf Observation 5} For any $M \in \N$ and $R \in \Fcal(M)$, $y \in Y$, we have
        $$
        \mu(R) f_t(g^M u(\kappa(R)) y) \leq A \int_{R} f_t( g^M u(x) y)\, d\mu(x).
        $$
        \noindent {\bf Explanation:} Note that 
        \begin{align*}
            \mu(R)f_t(g^M u(\kappa(R)) y) &\leq \mu(R) A \int_{\Mat}f_t(  u(x) g^M u(\kappa(R)) y) \, d\mu^{(\bfr_M)}(x) \quad \text{using the definition of $A$} \\
            &= A \int_{R} f_t( g^M u(x) y) \, d\mu(x) \quad (\text{ using Observation 4}).
        \end{align*} \\

        \noindent {\bf Observation 6} For $ i < l \leq j$ and $R \in \Fcal(l-1)$ such that $R \cap B(i,j) \neq \emptyset$, we have
        $$
            \int_{\Kcal}f_t(g u(x) g^{l-1} u(\kappa(R)) y) \, d\mu^{\bfr_{l-1}}(x) \leq ct^{-\beta} \mu(R) f_t(g^{l-1}u(\kappa(R)) y).
        $$
        \noindent {\bf Explanation:} Since $R \cap B(i,j) \neq \emptyset$, we get a point $x' \in R$ such that $f_t(g^lu(x')y)>T$. Using definition of $B_t$, we get that $f_t(g^{l-1}u(x')y)>T/B_t$. From Observation 1 and the fact that $\kappa(R) \in R$ that $f_t(g^{l-1}u(\kappa(R)) y)> T/AB_t=T'$. Now the above observation follows by using \eqref{eq: assum Contraction f t}. \\

\noindent {\bf Observation 7} For all $ i<j$ and subset $Z \subset B(i,j)$, we have
    \begin{align*}
        \sum_{\substack{R \in \Fcal(j) \\ R \cap Z \neq \emptyset}}\int_{R} f_t(g^{j}u(x)y) \, d\mu(x) \leq (cAt^{-\beta})^{j-i}\sum_{\substack{R \in \Fcal(i) \\ R \cap Z \neq \emptyset}}\int_{R} f_t(g^{i}u(x)y) \, d\mu(x)
    \end{align*}
    \noindent {\bf Explanation} 
    Note that for any $i < l \leq j$, we have
          \begin{align}
        &\sum_{\substack{R \in \Fcal(l) \\ R \cap Z \neq \emptyset}} \int_{R} f_t(g^l u(x)y) \, d\mu(x)\nonumber \\
        &\leq  \sum_{\substack{R \in \Fcal(l-1) \\ R \cap Z \neq \emptyset}} \int_{R} f_t(g^l u(x)y) \, d\mu(x) \quad (\text{ using Observation 3})  \nonumber\\
        &=  \sum_{\substack{R \in \Fcal(l-1) \\ R \cap Z \neq \emptyset }} \mu(R) \int_{\Mat} f_t(g u(x) g^{l-1} u(\kappa(R)) y) \, d\mu^{(\bfr_{l-1})}(x) \quad (\text{ using Observation 4}) \nonumber \\
        &\leq  ct^{-\beta}  \sum_{\substack{R \in \Fcal(l-1) \\ R \cap Z \neq \emptyset }} \mu(R) f_t(g^{l-1} u(\kappa(R)) y)  \quad (\text{ using Observation 6}) \nonumber\\
        &\leq cA t^{-\beta}  \sum_{\substack{R \in \Fcal(l-1) \\ R \cap Z \neq \emptyset } }  \int_{R} f_t(g^{l-1} u(x) y) \, d\mu(x) \quad (\text{ using Observation 5}). \label{eq: w 19}
    \end{align}
    Iteratively using \eqref{eq: w 19} for $l=j,j-1, \ldots, i+1$,the observation follows. \\

     \noindent {\bf Observation 8} Suppose $Z \subset \Mat$ and $\epsilon>0$ is given such that for all $M >N$, we have
        $$
        \sum_{\substack{R \in \Fcal(M) \\ R \cap Z \neq \emptyset}} \mu(R) \leq C_Z B^{M} t^{-M\epsilon},
        $$
        for some constants $B, C_Z$ independent of $M$. Then 
        $$
        \dim_P(Z \cap \Kcal) \leq s- \frac{1}{a_1+b_1} \left(\epsilon -  \frac{\log B}{\log t} \right).
        $$
        \noindent {\bf Explanation:} We will use Lemma \ref{lem: Falconer} to prove the observation. However, we will need the following notation. For $1 \leq i \leq m$, $1 \leq j \leq n$ and $l \in \N$, we define $P_l(i,j)$ as the unique integer satisfying 
        $$
        c_{ij}^{P_l(i,j)} \leq t^{-(a_1+b_1)l}< c_{ij}^{P_l(i,j)-1}. 
        $$
        Note that $a_1 + b_1 = \max_ia_i + \max_j b_j$, hence $P_l(i,j) \geq N_l(i,j)$.  Let $\delta>0$ be given. Assume that $M_\delta>0$ is such that $2\alpha t^{-M_\delta(a_1 +b_1)} \leq \delta < 2\alpha t^{-(M_\delta-1)(a_1+ b_1)}$. Assume that $\delta$ is small enough so that $M_\delta >N$. Let us try to cover $Z$ by sets of diameter less than or equal to $\delta$. We do this by selecting sets in $\prod_{ij} \Fcal_{ij}(P_{M_\delta}(i,j))$ which intersect $Z$. Note that the total number of elements in $\prod_{ij} \Fcal_{ij}(P_{M_\delta}(i,j))$ equals  
        $$
        \prod_{i,j} p_{i,j}^{P_{M_\delta}(i,j)}= \prod_{i,j}c_{i,j}^{-s_{i,j} P_{M_\delta}(i,j)} \leq \frac{1}{\prod_{i,j}c_{ij}} t^{s(a_1+b_1){M_\delta}},
        $$ 
        and each of them has equal $\mu$-measure. Thus, we get that
    \begin{align*}
        \#\{R \in \prod_{ij} \Fcal_{ij}(P_{M_\delta}(i,j)): R\cap Z \neq \emptyset \} &= (\#\prod_{i,j} \Fcal_{ij}(P_{M_\delta}(i,j))). \left(\sum_{\substack{R \in \prod_{i,j} \Fcal_{ij}(P_{M_\delta}(i,j)) \\ R\cap Z \neq \emptyset }} \mu(R) \right),\\
        &\leq \frac{1}{\prod_{i,j}c_{ij}} t^{s(a_1+b_1){M_\delta}}. \left(\sum_{\substack{R \in \Fcal({M_\delta}) \\ R\cap Z(N) \neq \emptyset }} \mu(R) \right), \\
        &\leq C_Z' \left(B t^{-\epsilon} t^{s(a_1+b_1)}\right)^{M_\delta},
    \end{align*}
    where $C_Z'= C_Z/(\prod_{i,j}c_{ij})$. Thus, we can cover $Z$ by $C_Z' \left(B t^{-\beta} t^{s(a_1+b_1)}\right)^{M_\delta}$-many sets of diameter less than or equal to $\max_{i,j}\{c_{ij}^{P_{M_\delta}(i,j)} \diam(\Kcal_{i,j})\} \leq 2\alpha. t^{-{M_\delta}(a_1+b_1)} \leq \delta$. Thus we have
    \begin{align}
        \label{eq: 1 3 1}
        \frac{\log L_\delta(Z)}{- \log \delta} \leq \frac{ \log(C_Z') + {M_\delta} \log (B t^{-\beta} t^{s(a_1+b_1) }) }{ -\log(2\alpha) +  (M_\delta-1)(a_1 + b_1) \log t} ,
    \end{align}
    where $L_\delta(Z)$ denotes the smallest number of sets of diameter at most $\delta$ that cover $Z$. Since $M_\delta \rightarrow \infty$ as $\delta \rightarrow 0$, we get from Lemma \ref{lem: Falconer} and \eqref{eq: 1 3 1} that
    \begin{align*}
        \dim_P(Z) &\leq \limsup_{M \rightarrow \infty} \frac{ \log(C_Z') + {M} \log (B t^{-\epsilon} t^{s(a_1+b_1) })}{ -\log(2\alpha) +  (M-1)(a_1 + b_1) \log t}  \\
        &= s - \frac{1}{a_1+ b_1} \left( \epsilon - \frac{\log B}{\log t} \right).
    \end{align*}
    Thus the observation follows. \\

    \noindent {\bf Observation 9} We have 
        \begin{align*}
           \dim_P(Z'(N) \cap \Kcal) \leq s- \frac{1}{a_1+b_1} \left(\beta + a -  \frac{\log A+ \log c}{\log t} \right).
        \end{align*}
        \noindent {\bf Explanation} Fix $M>N$. Note that by Observation 2, we have
         \begin{align}
            \label{eq: w 18}
              \sum_{\substack{R \in \Fcal(M) \\ R \cap Z'(N) \neq \emptyset}} \mu(R) &\leq \frac{A}{c_t t^{Ma}} \sum_{\substack{R \in \Fcal(M) \\ R \cap Z \neq \emptyset}} \int_{R} f_t(g^M u(x)y) \, d\mu(x).
          \end{align}
       Since $Z'(N) \subset B(M,N-1)$, we get from Observation 7 that
       \begin{align}
       \label{eq: w 19'}
        \sum_{\substack{R \in \Fcal(M) \\ R \cap Z \neq \emptyset}}\int_{R} f_t(g^{M}u(x)y) \, d\mu(x) \leq (cAt^{-\beta})^{M+1-N}\sum_{\substack{R \in \Fcal(N-1) \\ R \cap Z \neq \emptyset}}\int_{R} f_t(g^{N-1}u(x)y) \, d\mu(x).
    \end{align}
    Using \eqref{eq: w 18} and \eqref{eq: w 19'}, we get that for any $M>N$ the following holds
    \begin{align}
        \label{eq: w 20}
        \sum_{\substack{R \in \Fcal(M) \\ R \cap Z'(N) \neq \emptyset}} \mu(R) &\leq C_N c^M A^{M} t^{-M(\beta+ a)},
    \end{align}
     where $C_N= c^{1-N}A^{2-N}t^{\beta (N-1)} \int_{\Kcal} f_t(g^{N} u(x) y) \, d\mu(x)/ c_t$. The observation now follows from Observation 8. \\

    \noindent {\bf Observation 10} We have 
        \begin{align*}
           \dim_P(Z(N) \cap \Kcal) \leq s- \frac{1}{a_1+b_1} \left(q\beta -  \frac{\log2 + 3\log A + \log c }{\log t} \right).
        \end{align*}
    
    \noindent {\bf Explanation} Fix $M>N$. Fix a subset $Q\subset \{1, \ldots, M\}$ containing at least $qM$ elements. Let us define 
    $$
    Z(N,M,Q)= \{x \in Z(N): \text{ For $1 \leq l \leq M$},  f_t(g^lu(x)y)>T \text{ iff } l \in Q\}.
    $$
     We claim that
    \begin{align}
        \label{eq: measure of z m n q}
        \sum_{\substack{R \in \Fcal(M) \\ R \cap Z(N,M,Q) \neq \emptyset}} \mu(R) \leq c^{\#Q}t^{-\#Q \beta} A^{\#Q+ 2M} \leq c^Mt^{-qM \beta}A^{3M}.
    \end{align}
    To show this claim, we decompose the set $Q$ and its complement into maximal connected intervals as
    $$
    Q= \bigsqcup_{i=1}^{l_1} I_i', \quad \{1, \ldots, M\} \setminus Q= \bigsqcup_{i=1}^{l_2} I_i'',
    $$
    for some $l_1, l_2\geq 0$. Note that $|l_1-l_2|\leq 1$. Order the intervals $I_i'$ and $I_j''$ in the way they appear in the sequence $1 \leq \cdots \leq M$. Write $I_i$ for the $i$-th interval in this sequence for $1 \leq i \leq l_1 +l_2$. Claim that if $I_i= \{j_1, \ldots, j_2\}$, then 
    \begin{align}
        \label{eq: measure induct 1}
        \sum_{\substack{R \in \Fcal(j_2) \\ R \cap Z(N,M,Q) \neq \emptyset}} \mu(R) \leq \begin{cases}
         c^{\# I_i}   t^{-\#I_i \beta} A^{\#I_i + 2} \sum_{\substack{R \in \Fcal(j_1-1) \\ R \cap Z(N,M,Q) \neq \emptyset}} \mu(R) \text{ if } I_i \subset Q, \\
            \sum_{\substack{R \in \Fcal(j_1-1) \\ R \cap Z(N,M,Q) \neq \emptyset}} \mu(R) \text{ otherwise. } 
        \end{cases}
    \end{align}
    To prove this note that if $I_i \subset Q$, then we have $Z(N,M,Q) \subset B(j_1-1, j_2)$. Thus, we have
    \begin{align*}
        &\sum_{\substack{R \in \Fcal(j_2) \\ R \cap Z(N,M,Q) \neq \emptyset}} \mu(R) \\
        &\leq \frac{A}{T} \sum_{\substack{R \in \Fcal(j_2) \\ R \cap Z(N,M,Q) \neq \emptyset}} \int_R f_t(g^{j_2}u(x)y) \, d\mu(x) \text{ using Observation 2}\\
        &\leq \frac{A}{T} (cAt^{-\beta})^{j_2+1-j_1} \sum_{\substack{R \in \Fcal(j_1-1) \\ R \cap Z(N,M,Q) \neq \emptyset}} \int_R f_t(g^{j_1-1}u(x)y) \, d\mu(x) \text{ using Observation 7}\\
        &\leq \frac{A}{T} (cAt^{-\beta})^{j_2+1-j_1}  \sum_{\substack{R \in \Fcal(j_1-1) \\ R \cap Z(N,M,Q) \neq \emptyset}} AT \mu(R) .
    \end{align*}
    To justify the last inequality, it suffices to show that
$f_t(g^{j_1 - 1} u(x) y) < AT$ for all $x \in R\in \Fcal(j_1-1)$ with $R \cap Z(N,M,Q) \neq \emptyset$. If $j_1 = 1$, this follows from the bound $f_t(y) \leq T$ along with \eqref{eq: assum log lipschitz f t} which holds for $C_0= A$ and all $h =u(x)$ for $x \in \supp(\mu)$. For $j_1 > 1$, note that $j_1 - 1 \notin Q$. Hence, for every $R \in \Fcal(j_1 - 1)$ with $R \cap Z(N,M,Q) \neq \emptyset$,
there exists $x' \in R$ such that $f_t(g^{j_1-1}u(x')y)< T$, which combined with Observation 1 implies that $f_t(g^{j_1-1}u(x)y)< AT$ for all $x \in R$. This proves \eqref{eq: measure induct 1} for case $I_i \subset Q$. The other case is trivial. Using \eqref{eq: measure induct 1} iteratively for $I_{l_1+ l_2}, \ldots, I_1$, the \eqref{eq: measure of z m n q} follows.

    Note that for any $M>N$, we have $Z(N)= \cup_Q Z(N,M,Q)$, where sum is taken over all $Q$ such that $\#Q \geq qM$. Thus using \eqref{eq: measure of z m n q} and the fact that there are at most $2^M$ subsets of $\{1, \ldots, M\}$, we get
    \begin{align}
         \sum_{\substack{R \in \Fcal(M) \\ R \cap Z(N) \neq \emptyset}} \mu(R) \leq 2^Mt^{-qM\beta} A^{3M}c^M.
    \end{align}
    The observation now follows by using Observation 8. \\

 Note that $\dim_P(\cup_l I_l) = \sup_l \dim_P(I_l)$ for any countable collection of Borel sets $I_l$. Thus, from \eqref{eq: w 15} and Observation 10  (respectively \eqref{eq: w 15'}, Observation 9), we get that
  \begin{align*}
       \dim_P(\Div(y,Y_f,p) \cap \Kcal) \leq s- \frac{1}{a_1+b_1} \left(q\beta -  \frac{\log2+ 3\log A + \log c}{\log t} \right) \\
        \dim_P \left(x\in \mc{K}: \substack{\text{ for all $\tau \in S$, the following holds for all large enough $t$} \\  f_\tau(g_tu(x)y) \geq c_\tau t^{a} }  \right) \leq  s- \frac{1}{a_1+b_1} \left(a+\beta -  \frac{\log A + \log c}{\log t} \right)
  \end{align*}
  Since $A$ and $c$ are independent of $t$ and $q$, letting $q \rightarrow p$ and $t \rightarrow \infty$ (which exists as $S$ is unbounded), we get that the theorem holds. Hence proved.
\end{proof}

%==================================================================================================================================================================================
\section{Final Proof}

\begin{lem}
\label{lem: Sing Dynamical Interpretation}
    If $\theta \in \Sing(a,b)$, then   $$ \varphi_1(g_t u(\theta)\Gamma) \rightarrow \infty$$ as $t \rightarrow \infty$. As a result, $\Sing(a,b) \subset \Divergent(\Gamma, 1)$.
\end{lem}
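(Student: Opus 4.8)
The plan is to run Dani's correspondence explicitly. The first step is to record that $\varphi_1(\Lambda)=1/\lambda_1(\Lambda)$, where $\lambda_1(\Lambda)$ denotes the norm of a shortest nonzero vector of $\Lambda$ in the sup-norm $\|\cdot\|$ of \eqref{eq: def ||||} on $V_1=\R^d$ (equivalently, in any fixed norm, since all are equivalent): a rank-$1$ primitive subgroup of $\Lambda$ is precisely $\Z v$ for a primitive vector $v$, with $\|\Z v\|=\|v\|$, and a shortest nonzero lattice vector is automatically primitive. Hence it suffices to prove $\lambda_1(g_tu(\theta)\Z^d)\to 0$, i.e.\ that for every $\delta\in(0,1)$ there is $t_\delta$ with $\lambda_1(g_tu(\theta)\Z^d)\le\delta$ for all $t>t_\delta$.

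The heart of the argument is choosing the right coupling between the parameters. Fix $\delta\in(0,1)$, set $\e=\delta^{1/a_m+1/b_n}$, and let $T_\e$ be as in the definition of $(a,b)$-singularity. For a given $t>T_\e\,\delta^{-1/b_n}$ I would put $T=t\,\delta^{1/b_n}>T_\e$ and pick $(p,q)\in\Z^m\times(\Z^n\setminus\{0\})$ with $|(p+\theta q)_i|\le(\e/T)^{a_i}$ for $i\le m$ and $|q_j|\le T^{b_j}$ for $j\le n$ (these are the two displayed inequalities in the definition, rewritten via $\|\cdot\|_a$ and $\|\cdot\|_b$). Then the nonzero vector $g_tu(\theta)\binom{p}{q}=g_t\binom{p+\theta q}{q}\in g_tu(\theta)\Z^d$ has, for $i\le m$, $i$-th coordinate of absolute value at most $t^{a_i}(\e/T)^{a_i}=(t\e/T)^{a_i}=\delta^{a_i/a_m}\le\delta$ (using $a_i\ge a_m$ and $\delta<1$) and, for $j\le n$, $(m+j)$-th coordinate of absolute value at most $t^{-b_j}T^{b_j}=(T/t)^{b_j}=\delta^{b_j/b_n}\le\delta$ (using $b_j\ge b_n$). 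Thus $\lambda_1(g_tu(\theta)\Z^d)\le\delta$ for all such $t$, and since $\delta\in(0,1)$ was arbitrary, $\varphi_1(g_tu(\theta)\Gamma)\to\infty$.

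For the concluding inclusion $\Sing(a,b)\subset\Divergent(\Gamma,1)$, I would argue as follows: what was just shown gives, for each $\e>0$, some $s_\e$ with $\lambda_1(g_su(\theta)\Z^d)\le\e$ for all $s>s_\e$, so $\delta_{g_{e^t}u(\theta)\Gamma}\{y:\lambda_1(y)\le\e\}=1$ once $t>\log s_\e$; hence $\tfrac1T\int_0^T\delta_{g_{e^t}u(\theta)\Gamma}\{y:\lambda_1(y)\le\e\}\,dt\ge 1-\tfrac{\max(0,\log s_\e)}{T}\to 1$, so the $\liminf$ over $T$ equals $1$ for every $\e$, and letting $\e\to 0$ yields $\theta\in\Divergent(\Gamma,1)$. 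The whole argument is routine; the only delicate point is the bookkeeping of the quasi-norm exponents — arranging the coupling $T=t\delta^{1/b_n}$, $\e=\delta^{1/a_m+1/b_n}$ so that the $t^{a_i}$-expanded and the $t^{-b_j}$-contracted coordinates both drop below $\delta$, which works precisely because $a_m$ and $b_n$ are the smallest among the weights — and I do not anticipate any genuine obstacle.
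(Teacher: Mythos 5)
Your proof is correct and takes essentially the same approach as the paper: the Dani correspondence made explicit via the identity $\varphi_1=1/\lambda_1$, followed by a coupling of the parameters $\e$, $T$, $t$ that forces every coordinate of $g_t u(\theta)\binom{p}{q}$ below $\delta$. The paper's version uses a slightly different but equivalent reparametrization (keeping $\e=\delta$, $T=t$ from the definition and speeding the flow to $\tau=\delta^{-a_m/(a_m+b_n)}t$, giving $\lambda_1\le\delta^{a_mb_n/(a_m+b_n)}$), which is the same computation with the roles of $\delta$-powers shuffled.
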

\begin{proof}
    By definition, $\theta  \in \Sing(a,b)$ if, for every $0< \delta <1$, there exists $T_{\delta}>0$ such that for all $t > T_{\delta}$, there exist integers $(p, q) \in \Z^m \times (\Z^n \setminus \{0\})$ with the vector $z = (p + \theta q, q) \in u(\theta) \Z^d$ satisfying:
\begin{align*}
    |z_i|^{1/a_i} &\leq \frac{\delta}{t} \quad \text{for all } 1 \leq i \leq m, \\
    |z_{j+m}|^{1/b_j} &\leq t \quad \text{for all } 1 \leq j \leq n.
\end{align*}

    This means that for $\tau = \delta^{ -a_m/(a_m+b_n)} t$, we have that $g_{\tau}u(\theta)\Z^d$ contains the vector $g_{\tau}z = (z_1',\ldots, z_d')$, which satisfies
    \begin{align*}
        |z_i'| = \delta^{ -a_ia_m/(a_m+b_n)} t^{a_i}|z_i| \leq \delta^{ -a_ia_m/(a_m+b_n)} t^{a_i} \delta^{a_i}t^{-a_i}  \leq \delta^{a_mb_n/(a_m+b_n)}  \quad \text{for all } 1 \leq i \leq m \\
        |z_{j+m}'| = \delta^{ b_ja_m/(a_m+b_n)} t^{-b_j}|z_{j+m}| \leq \delta^{ b_ja_m/(a_m+b_n)} t^{-b_j}t^{b_j}  \leq \delta^{a_mb_n/(a_m+b_n)} \quad \text{for all } 1 \leq j \leq n,
    \end{align*}
    where we have used the fact that $a_m = \min_ia_i$ and $b_n = \min_j b_j$. This means that 
    \begin{align}
    \label{eq: t 1}
        \varphi_1(g_\tau u(\theta)\Gamma) \geq \delta^{-a_mb_n/(a_m+b_n)}.
    \end{align}
 Note that \eqref{eq: t 1} holds for all $\tau>T_\delta \delta^{ -a_m/(a_m+b_n)} $. Since $0<\delta<1$ is arbitrary, this lemma follows.
\end{proof}

\begin{lem}
\label{lem: omega sing dynamical Interpretation}
    Let $ 0<\omega < \omega'$. If $\theta \in \VSing(a,b,\omega')$, then there exists $T_\theta:= T_\theta(\omega)$ such that for all $\e>0$ and $t>T_\theta$, we have 
    $$\varphi_1(g_tu(\theta)\Gamma) > t^{\frac{ a_mb_n\omega}{a_m+b_n+ a_m\omega}}. $$
\end{lem}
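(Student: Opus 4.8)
The plan is to run the same computation as in the proof of Lemma~\ref{lem: Sing Dynamical Interpretation}, but to use the strict inequality $\omega<\omega'$ to gain a genuine polynomial rate. First I would fix an auxiliary exponent $\omega''$ with $\omega<\omega''<\omega'$. Since $\theta\in\VSing(a,b,\omega')$ means $\omega(\theta,a,b)\ge\omega'>\omega''$, and since the set of admissible exponents in the definition of $\omega(\theta,a,b)$ is downward closed (a solution for a larger exponent works for any smaller one because $T\ge 1$), there is $T_1>0$ so that for every $T>T_1$ there exist $(p,q)\in\Z^m\times(\Z^n\setminus\{0\})$ with $\|p+\theta q\|_a\le T^{-(1+\omega'')}$ and $\|q\|_b\le T$. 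Unwinding the quasi-norms, the vector $z=(p+\theta q,\,q)\in u(\theta)\Z^d$ is nonzero (as $q\ne 0$) and satisfies $|z_i|\le T^{-a_i(1+\omega'')}$ for $1\le i\le m$ and $|z_{j+m}|\le T^{b_j}$ for $1\le j\le n$.

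Next I would push $z$ forward by $g_\tau$ with the balanced choice $\tau=T^{1+s}$, $s=\tfrac{a_m\omega''}{a_m+b_n}\in(0,\omega'')$. Then $|(g_\tau z)_i|\le(\tau T^{-(1+\omega'')})^{a_i}=T^{-a_i(\omega''-s)}$ for $i\le m$ and $|(g_\tau z)_{j+m}|\le(\tau^{-1}T)^{b_j}=T^{-b_j s}$ for $j\le n$; since $T>1$ the maxima over $i$ and over $j$ are attained at the minimal weights $a_m$ and $b_n$, and $s$ was chosen precisely so that $a_m(\omega''-s)=b_n s=\tfrac{a_m b_n\omega''}{a_m+b_n}$. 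Hence $\|g_\tau z\|\le T^{-\frac{a_m b_n\omega''}{a_m+b_n}}$; rewriting $T=\tau^{(a_m+b_n)/(a_m+b_n+a_m\omega'')}$ turns this into $\|g_\tau z\|\le\tau^{-\kappa''}$ with $\kappa''=\tfrac{a_m b_n\omega''}{a_m+b_n+a_m\omega''}$. Since $g_\tau z$ is a nonzero vector of the lattice $g_\tau u(\theta)\Z^d$ and $\varphi_1$ dominates the reciprocal of the sup-norm of any such vector, this gives $\varphi_1(g_\tau u(\theta)\Gamma)\ge\|g_\tau z\|^{-1}\ge\tau^{\kappa''}$.

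Finally I would observe that as $T$ runs over $(T_1,\infty)$ the time change $T\mapsto\tau=T^{(a_m+b_n+a_m\omega'')/(a_m+b_n)}$ is a continuous increasing bijection onto a ray $(\tau_1,\infty)$, so $\varphi_1(g_\tau u(\theta)\Gamma)\ge\tau^{\kappa''}$ holds for \emph{all} $\tau>\tau_1$, not merely along a sequence. Because $x\mapsto\tfrac{a_m b_n x}{a_m+b_n+a_m x}$ is strictly increasing on $(0,\infty)$ and $\omega''>\omega$, we have $\kappa''>\kappa:=\tfrac{a_m b_n\omega}{a_m+b_n+a_m\omega}$, so taking $T_\theta:=\max\{\tau_1,1\}$ yields $\varphi_1(g_t u(\theta)\Gamma)\ge t^{\kappa''}>t^{\kappa}$ for every $t>T_\theta$ (the parameter $\e$ playing no role, exactly as in Lemma~\ref{lem: Sing Dynamical Interpretation}). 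I do not expect a genuine obstacle: the content is entirely the exponent bookkeeping. The two subtle points, which I would state carefully, are (i) inserting $\omega''$ strictly between $\omega$ and $\omega'$, which is what upgrades the $\ge$ to the required strict $>$, and (ii) checking that the time change sweeps out an entire half-line so the bound is uniform in $t$ rather than just cofinal.
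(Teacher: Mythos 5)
Your proof is correct and follows the same route as the paper: unwind the definition of $\VSing(a,b,\omega')$ to get a lattice vector $z=(p+\theta q,q)$, push forward by $g_\tau$ with the balanced time change $\tau=T^{1+s}$, $s=\tfrac{a_m\cdot}{a_m+b_n}$, use that the maxima over $i$ and $j$ occur at the minimal weights $a_m,b_n$, and conclude via $\varphi_1(g_\tau u(\theta)\Gamma)\ge\|g_\tau z\|^{-1}$. The one spot where you are slightly more careful than the paper is inserting $\omega''$ strictly between $\omega$ and $\omega'$: the paper runs the computation with $\omega$ itself and so technically only obtains the non-strict bound $\varphi_1\ge t^{\kappa}$, whereas your interpolation $\kappa''>\kappa$ delivers the strict inequality stated in the lemma.
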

\begin{proof}
    By definition, if $\theta \in \VSing(a,b,\omega')$, then there exists $T_\theta \geq 1$ such that for all $t>T_\theta$, there exist integers $(p,q) \in \Z^m \times (\Z^n \setminus \{0\})$ with the vector $z= (p+\theta q, q) \in u(\theta) \Z^d$ satisfying:
    \begin{align*}
        |z_i|^{1/a_i} \leq \frac{1}{t^{1+\omega}} \quad \text{for all } 1 \leq i \leq m \\
        |z_{j+m}|^{1/b_j} \leq t \quad \text{for all } 1 \leq j \leq n.
    \end{align*}
    This means that for $\tau = t^{1+ \frac{a_m\omega}{a_m+b_n}}$, we have $g_{\tau}u(\theta)\Z^d$ contains the vector $g_{\tau}z = (z_1',\ldots, z_d')$, which satisfies
    \begin{align*}
        |z_i'| &= t^{a_i\left(1+ \frac{a_m\omega}{a_m+b_n} \right)}|z_i| \leq \frac{t^{a_i\left(1+ \frac{a_m\omega}{a_m+b_n} \right)}}{t^{a_i+ a_i\omega}} \leq t^{-\frac{a_mb_n\omega}{a_m+b_n}}  \quad \text{for all } 1 \leq i \leq m \\
        |z_{j+m}'| &= t^{-b_j\left(1+ \frac{a_m\omega}{a_m+b_n} \right) }|z_{j+m}| \leq t^{-b_j\left(1+ \frac{a_m\omega}{a_m+b_n} \right) }t^{b_j} \leq t^{-\frac{a_mb_n\omega}{a_m+b_n}}  \quad \text{for all } 1 \leq j \leq n,
    \end{align*}
    where we used the fact that $a_m = \min_ia_i$ and $b_n = \min_j b_j$. This means that 
    \begin{align}
    \label{eq: t 2}
         \varphi_1(g_\tau u(\theta)\Gamma) \geq t^{ \frac{ a_mb_n\omega }{a_m+b_n}} = \tau^{\frac{ a_mb_n\omega}{a_m+b_n+ a_m\omega}}.
    \end{align}
 Note that \eqref{eq: t 2} holds for all $\tau>(T_\theta)^{{1+ a_m\omega/(a_m+b_n)}} $. This proves the lemma.
\end{proof}

\begin{prop}
    \label{prop:Genral Estimate}
    Let $\eta, \eta_1, \ldots, \eta_{d-1} \in \R$ be a sequence satisfying the following conditions:
    \begin{align*}
        0<\eta_i &\leq \zeta_i(\mu) \quad \text{for all } 1 \leq i \leq d-1,  \\
        \frac{1}{\eta_{i-j}} + \frac{1}{\eta_{i+j}} &\leq \frac{2}{\eta_i} \quad \text{for all } 1 \leq i \leq d-1, \, 1\leq j \leq \min\{i, d-i\},  \\
        \eta &= \min_{1 \leq l \leq d} w_l \eta_l,
    \end{align*}
    where $1/\eta_0 = 1/\eta_d := 0$. 

   Then, the following bounds hold for all $0<\gamma \leq (s(a_1+ b_1)- \eta)/\eta_1$, $0< p \leq 1$ and $x \in \X$
    \begin{align*} 
        \dim_P( \Divergent(x,p) \cap \Kcal) &\leq s - \frac{p\eta}{a_1 + b_1}, \\ 
        \dim_P( \{ \theta \in \Kcal: \substack{ \text{ there exists $T_\theta >0$ such that for all $t>T_\theta$ ,} \\ \text{ we have $\varphi_1( g_tu(\theta) x) \geq t^{\gamma} $} } \}) &\leq s - \frac{1}{a_1 + b_1} \left( \eta + \eta_1 \gamma \right).
    \end{align*}
\end{prop}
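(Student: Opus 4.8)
The plan is to derive Proposition \ref{prop:Genral Estimate} by combining Proposition \ref{prop: existence of height function} with Theorem \ref{thm: contraction implies dimesnion bound}, after a preliminary perturbation that upgrades the non-strict inequalities on $(\eta_1,\dots,\eta_{d-1})$ assumed here to the strict inequalities fixed in Section \ref{sec: Height Functions}.

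\textbf{Step 1: reduction to the strict case.} Given $(\eta_i)$ as in the hypothesis, for $\delta>0$ set $1/\eta_i^{(\delta)} := 1/\eta_i + \delta\, i(d-i)$, so that $1/\eta_0^{(\delta)}=1/\eta_d^{(\delta)}=0$. Using that $k\mapsto k(d-k)$ is a quadratic with second difference $-2$, a one-line computation gives, for all $1\le i\le d-1$ and $1\le j\le \min\{i,d-i\}$,
\begin{align*}
\frac{2}{\eta_i^{(\delta)}} - \frac{1}{\eta_{i-j}^{(\delta)}} - \frac{1}{\eta_{i+j}^{(\delta)}} = \Big(\frac{2}{\eta_i} - \frac{1}{\eta_{i-j}} - \frac{1}{\eta_{i+j}}\Big) + 2\delta j^2 > 0 ,
\end{align*}
while $0<\eta_i^{(\delta)}\le \eta_i\le \zeta_i(\mu)$ with the last inequality strict for $\delta>0$ (since $\eta_i^{(\delta)}$ strictly decreases in $\delta$). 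Hence $\bfn^{(\delta)}$ satisfies the strict requirements of Section \ref{sec: Height Functions}. Moreover $\eta^{(\delta)}:=\min_l w_l\eta_l^{(\delta)}\uparrow\eta$ and $\eta_1^{(\delta)}\uparrow\eta_1$ as $\delta\downarrow0$, and since $\eta^{(\delta)}\le\eta$ and $\eta_1^{(\delta)}\le\eta_1$, any $\gamma$ in the stated range also satisfies $\gamma\le (s(a_1+b_1)-\eta^{(\delta)})/\eta_1^{(\delta)}$. Therefore it suffices to prove the two bounds for $\bfn^{(\delta)}$ and then let $\delta\to0$, the right-hand sides $s-p\eta^{(\delta)}/(a_1+b_1)$ and $s-(\eta^{(\delta)}+\eta_1^{(\delta)}\gamma)/(a_1+b_1)$ decreasing to the asserted values. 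From now on we assume the strict inequalities.

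\textbf{Step 2: verifying the contraction hypothesis.} Work with $Y=\X$ with its natural $G$-action and $Y_f=\emptyset$, which is legitimate since each $\varphi_l$ — a maximum of reciprocals of positive covolumes — is finite on $\X$, hence so is each $f_{\e,\bfn}$. Fix an unbounded set $S\subset(1,\infty)$ on which Proposition \ref{prop: existence of height function} applies, and for $\tau\in S$ let $\e(\tau)\in(0,1)$ and $b(\tau)\ge0$ be the constants it produces with $t=\tau$; put $f_\tau:=f_{\e(\tau),\bfn}$. Property (1) of Definition \ref{def: Contraction Hypothesis} is immediate; property (2) holds with a constant independent of $\tau$, because each $\varphi_l$ is log-Lipschitz for the $G$-action and adding the fixed constant $\e(\tau)^{-1}$ cannot worsen the bound; and for property (3), if $f_\tau(y)>T(\tau):=b(\tau)\tau^{\eta}/C_{\bfn}$ then $b(\tau)<C_{\bfn}\tau^{-\eta}f_\tau(y)$, so Proposition \ref{prop: existence of height function} yields, for every $r\in\Xi$,
\begin{align*}
\int_{\Mat} f_\tau(g_\tau u(x)y)\,d\mur(x)\ \le\ 2C_{\bfn}\tau^{-\eta}f_\tau(y)+b(\tau)\ <\ 3C_{\bfn}\tau^{-\eta}f_\tau(y).
\end{align*}
Thus $\mu$ satisfies the $(\{f_\tau\}_{\tau\in S},\eta)$-contraction hypothesis on $\X$ with $c=\max\{1,3C_{\bfn}\}$ and $\beta=\eta$; note $\eta<s(a_1+b_1)$, since a positive $\gamma$ in the stated range exists.

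\textbf{Step 3: applying Theorem \ref{thm: contraction implies dimesnion bound}, and the main obstacle.} The first conclusion of Theorem \ref{thm: contraction implies dimesnion bound} (with $y=x$ and $\beta=\eta$) gives $\dim_P(\Div(x,\emptyset,p)\cap\Kcal)\le s-p\eta/(a_1+b_1)$, and $\Div(x,\emptyset,p)=\Divergent(x,p)$ because the Mahler compact sets $\{z\in\X:\lambda_1(z)\ge\e\}$ are cofinal among compact subsets of $\X$. For the second bound, apply the second conclusion with $c_\tau\equiv1$ and $a=\eta_1\gamma$, which lies in $(0,\,s(a_1+b_1)-\eta]$ by the hypothesis on $\gamma$: since $f_\tau\ge\varphi_1^{\eta_1}$ for every $\tau$, any $\theta\in\Kcal$ for which $\varphi_1(g_tu(\theta)x)\ge t^{\gamma}$ holds for all large $t$ satisfies $f_\tau(g_tu(\theta)x)\ge t^{\eta_1\gamma}$ for all large $t$ and all $\tau\in S$, so the set in question is contained in the one to which Theorem \ref{thm: contraction implies dimesnion bound} applies, giving the bound $s-(\eta_1\gamma+\eta)/(a_1+b_1)$. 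Undoing Step 1 finishes the proof. The substantial inputs here are Proposition \ref{prop: existence of height function} and Theorem \ref{thm: contraction implies dimesnion bound}; the only point in the present argument requiring care is Step 1, namely checking that the perturbed exponents still satisfy \emph{every} strict hypothesis of Section \ref{sec: Height Functions} while converging to the original data, so that the $\delta\to0$ limit returns exactly the stated estimates.
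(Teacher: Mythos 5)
Your proposal is correct and follows essentially the same route as the paper: reduce to strictly‑inequality exponents by a small perturbation, observe that Proposition \ref{prop: existence of height function} then gives the contraction hypothesis for $f_{\e(t),\bfn}$, apply Theorem \ref{thm: contraction implies dimesnion bound} (using $f_\tau\ge\varphi_1^{\eta_1}$ for the second bound), and let the perturbation parameter go to zero. The only cosmetic difference is that you spell out an explicit perturbation sequence $q_i=i(d-i)$ with $2q_i-q_{i-j}-q_{i+j}=2j^2$, where the paper merely asserts such a sequence exists.
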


\begin{proof}
We divide the proof into two cases. \\
{\bf Case 1}
    In this case, we assume $\eta, \eta_1, \ldots, \eta_{d-1}$ satisfies following strict inequalities
        \begin{align*}
            \eta_i &< \zeta_i(\mu) \quad \text{ for all } 1 \leq i \leq d-1 \\
           \frac{1}{\eta_{i-j}} + \frac{1}{\eta_{i+j}} &< \frac{2}{\eta_i} \text{ for all } 1 \leq i \leq d-1, 1 \leq j \leq \min\{ i, d-i\}.
        \end{align*}
    In this case, using Proposition \ref{prop: existence of height function}, for every $t>1$, choose $\e(t)$ and define the collection of height functions
$$
 \{f_t:= f_{\e(t), \bfn}: t >1\}.
$$
Now it is easy to see that the action of $G$ on ${\X}$ satisfies $(\{f_t\}, \eta)$-contraction hypothesis with respect to measure $\mu$. Indeed, the first two properties of Definition \ref{def: Contraction Hypothesis} follow immediately from the definition of $f_t$. The third property follows from Proposition \ref{prop: existence of height function} and for $c= 3C_{\bfn}$ and $T= b t^{\eta}/C_{\bfn}$ corresponding to each $t$ (note that the value of $b$ also depends on $t$).

Note that in notation of Definition \ref{def:div}
\begin{align}
    \Divergent(x,p) &= \Div(x, \emptyset, p), \label{eq: c c 1} \\
    \{ \theta \in \Kcal: \substack{ \text{ there exists $T_\theta >0$ such that for all $t>T_\theta$ ,} \\ \text{ we have $\varphi_1( g_tu(\theta) x) \geq t^{\gamma} $} } \} &\subset \{ \theta \in \Kcal: \substack{\text{ there exists $T_\theta >0$ such that for all $t>T_\theta$ and $\tau >1$,} \\ \text{ we have $f_\tau( g_tu(\theta) x) \geq t^{\eta_1 \gamma} $ }} \}. \label{eq: c c 2}
\end{align}

Thus, by Theorem \ref{thm: contraction implies dimesnion bound} and \eqref{eq: c c 1}, \eqref{eq: c c 2}, the proposition follows in this case.

{\bf Case 2} In this case, fix $\eta, \eta_1, \ldots, \eta_{d-1}$, which satisfies the condition of the proposition but does not lie in Case 1.

To proceed further, fix a sequence $q_1, \ldots, q_{d-1}$ satisfying
\begin{align*}
    q_i &>0 \quad \text{ for all } 1\leq i \leq d-1 \\
    2q_i &> q_{i-j} + q_{i+j} \text{ for all } 1 \leq i \leq d-1, 1 \leq j \leq \min\{ i, d-i\},
\end{align*}
where we define $q_0= q_d =0$. The construction of such a sequence is easy.

For every $\delta>0$, we define the sequence $\eta^{(\delta)}, \eta^{(\delta)}_1, \ldots, \eta^{(\delta)}_{d-1}$ as
$$
\eta^{(\delta)}_j = \frac{1}{\frac{1}{\eta_j} + \delta q_j}, \quad \eta^{(\delta)}= \min_{1 \leq l \leq d-1} w_l \eta^{(\delta)}_l,
$$
for all $j$. It is then clear that the sequence $\eta^\delta, \eta^{\delta}_1, \ldots, \eta^{\delta}_{d-1}$ falls in Case 1, which gives
\begin{align*}
    \dim_P( \Divergent(x,p) \cap \Kcal) &\leq s- \frac{p\eta^{(\delta)}}{a_1 + b_1},\\
     \dim_P( \{ \theta \in \Kcal: \substack{ \text{ there exists $T_\theta >0$ such that for all $t>T_\theta$ ,} \\ \text{ we have $\varphi_1( g_tu(\theta) x) \geq t^{\gamma} $} } \}) &\leq s- \frac{1}{a_1 + b_1}\left(\eta^{(\delta)}+  \frac{\eta_1^{(\delta)} a_mb_n\omega}{a_m + b_n+ a_m\omega} \right).
\end{align*}
Now taking the limit as $\delta \rightarrow 0$, we get that proposition holds in this case as well.
\end{proof}

\begin{proof}[Proof of Theorem \ref{main thm 2}]
    By Proposition \ref{Critical Exponent is positive}, we know that $\zeta_l(\mu) > 0$ for all $1 \leq l \leq d-1$. Therefore, we can construct a sequence $\eta_1, \ldots, \eta_{d-1}$ such that:
    \begin{align}
        0<\eta_i &\leq \zeta_i(\mu) \quad \text{for all } 1 \leq i \leq d-1, \label{eq: 1 2 1} \\
        \frac{1}{\eta_{i-j}} + \frac{1}{\eta_{i+j}} &\leq \frac{2}{\eta_i} \quad \text{for all } 1 \leq i \leq d-1, \, 1 \leq j \leq \min\{i, d-i\}, \label{eq: 1 2 2}
    \end{align}
    where $1/\eta_0 = 1/\eta_d := 0$. For any such sequence, the results in \eqref{eq: main thm 2 1} and \eqref{eq: main thm 2 2} follow directly from Proposition \ref{prop:Genral Estimate} and fact that $\lambda_1(x) =\varphi_1(x)^{-1}$ for all $x \in \X$. This completes the proof of the first part of the theorem.

    To establish \eqref{eq: main thm 3}, observe that Lemma \ref{lem: Critical Exponent Real Case} guarantees that the constants defined in \eqref{eq: main thm 3} satisfy the conditions in \eqref{eq: 1 2 1} and \eqref{eq: 1 2 2}. Similarly, for the constants defined in \eqref{eq: main thm 4} (respectively, \eqref{eq: main thm 5}), the validity of \eqref{eq: 1 2 1} and \eqref{eq: 1 2 2} follows from Lemma \ref{lem: n 1 Critical Expo} (respectively, Lemma \ref{lem: m 1 Critical Expo}). 

    Thus, the theorem is proven.
\end{proof}

\begin{proof}[Proof of Theorem \ref{main thm}]
    Using Lemmas \ref{lem: Sing Dynamical Interpretation} and \ref{lem: omega sing dynamical Interpretation}, we have for all $\omega>0$
    \begin{align*}
        \Sing(a,b) &\subset \Divergent(\Z^d,1), \\
        \Sing(a,b,\omega) &\subset \bigcap_{\omega'< \omega} \{\theta \in \Mat: \text{ for all large $t$, we have } \lambda_1(g_tu(\theta)x)\leq t^{\frac{ a_mb_n\omega'}{a_m+b_n+ a_m\omega'}} \}.
    \end{align*}
    The theorem now follows from Theorem \ref{main thm 2}.
\end{proof}

\bibliography{Biblio}
\end{document}